\documentclass[a4paper,oneside,10pt]{article}

\usepackage{graphicx} 
\usepackage{amsmath}
\usepackage{amssymb}
\usepackage{mathtools}
\usepackage{amsthm}
\usepackage[margin=3cm]{geometry}
\usepackage{authblk}

\title{3D Virtual Element Method for Advection-Diffusion-Reaction Problems with Variable Coefficients on Locally Quasi-Uniform Polytopes}
\author[1]{M. Trezzi\thanks{\texttt{manuel.trezzi@unimib.it}}}
\affil[1]{Dipartimento di Matematica e Applicazioni, 
Università degli Studi di Milano-Bicocca, 
Via Roberto Cozzi 55 - 20125 Milano, Italy}

\def\cip{{\rm cip}}

\def\PiNablaE{{\Pi^{\nabla, E}_{k}}}
\def\PiNablaF{{\Pi^{\nabla, F}_{k}}}
\def\PiZeroE{{\Pi^{0, E}_{k}}}
\def\PiZeroF{{\Pi^{0, F}_{k}}}
\def\PiZero{{\Pi^{0}_{k}}}

\def\PiVecMinusOneE{{\mathbf{\Pi}^{0, E}_{k-1}}}

\newcommand{\beps}{\boldsymbol{\varepsilon}}
\newcommand{\bb}{\boldsymbol{\beta}}
\newcommand{\xx}{\mathbf{x}}
\newcommand{\nn}{\mathbf{n}}

\newcommand{\reg}{s}

\newtheorem{theorem}{Theorem}[section]

\newtheorem{proposition}[theorem]{Proposition}
\newtheorem{corollary}[theorem]{Corollary}
\newtheorem{lemma}[theorem]{Lemma}
\newtheorem{remark}[theorem]{Remark}

\newcommand{\R}{\mathbb{R}}

\newcommand{\jump}[1]{\left[\!\left[#1\right]\!\right]}

\def\bskew{{b^{\rm skew}}}
\def\bskewEh{{b^{{\rm skew},E}_h}}

\begin{document}

\maketitle

\begin{abstract}
In this paper, we propose and analyze a Continuous Interior Penalty (CIP) stabilized Virtual Element Method (VEM) for three-dimensional advection-diffusion-reaction equations on general polyhedral meshes. While CIP-VEM schemes have been recently explored in a two-dimensional setting, their analysis heavily relies on global mesh quasi-uniformity and constant physical parameters. We overcome these limitations by introducing a novel three-dimensional variant of the Oswald-type quasi-interpolant. This allows us to establish robust, uniform error estimates in the hyperbolic limit under a realistic local quasi-uniformity assumption and variable coefficients. Finally, we provide a comprehensive set of three-dimensional numerical experiments to validate the theoretical convergence rates and demonstrate the absence of non-physical oscillations.
\end{abstract}

\section{Introduction}
Over the past decade, the Virtual Element Method (VEM) has emerged as a powerful tool for the numerical solution of partial differential equations (PDEs) on general polygonal/polytopal meshes \cite{volley, hitchhikers}. 
By extending the foundational concepts of traditional Finite Element Methods (FEM), VEM bypasses element-geometry constraints, allowing for elements with an arbitrary number of edges and faces, non-convex shapes, and hanging nodes. 
This geometric versatility is a cornerstone of VEM's success, driving its application across a wide spectrum of engineering problems, as highlighted in recent literature \cite{Wriggers2024, sema-simai}.

The objective of this work is the development of a robust polyhedral discretization for three-dimensional advection-diffusion-reaction equations in the hyperbolic limit.
This type of problem poses numerical challenges in the advection-dominated regime. 
In particular, standard Galerkin approximations typically fail, resulting in numerical solutions with severe spurious oscillations across element boundaries.
To overcome this issue, the FEM community has established several stabilization frameworks, including Streamline Upwind Petrov-Galerkin (SUPG) \cite{brooks1982}, Local Projection Stabilization (LPS) \cite{matthies2007unified}, upwind Discontinuous Galerkin (DG) formulations \cite{BrezziDG}, and Continuous Interior Penalty (CIP) methods \cite{burman:2004, douglas}. Some of these techniques has been extend to a VEM framework in \cite{berrone:2016,BBM-2018,BDLV:2021,vem08,VEM-CIP-NC, Trezzi2026}.

Among the aforementioned stabilization techniques, the CIP approach stands out as a ``minimal stabilization'' paradigm, as originally observed in \cite{burman:2005}. In fact, it has been shown that the distance between the continuous convective derivative and the discrete space scales precisely with the magnitude of the gradient jumps across element interfaces.

The CIP technique was recently extended to the VEM framework within a two-dimensional setting in \cite{VEM-CIP-Conf, VEM-CIP-NC}. 
Although these formulations yield robust estimates in the hyperbolic limit, their numerical analysis strictly relies on the assumption of global mesh quasi-uniformity and constant physical parameters. The present work aims to overcome these major restrictions. 
By introducing a novel three-dimensional variant of the Oswald-type quasi-interpolant, we are able to relax the global constraint to a realistic local quasi-uniformity assumption, while simultaneously handling variable coefficients.
The Oswald interpolant is necessary to gain control over the advective term within the energy norm of the problem. Alternatively, one could avoid employing the Oswald operator, but only at the cost of sacrificing the control of the advective derivative in the norm. Such a choice results in a weaker norm that fails to provide robust control over transport instabilities. Furthermore, due to the presence of the Oswald interpolant, boundary conditions must be implemented in a weak sense, leveraging a Nitsche-VEM approach \cite{Bertoluzza2022, PhD-Thesis}.
This advancement is crucial for practical applications: since transport problems frequently develop sharp boundary or internal layers, a local framework allows for highly localized mesh refinement strategies, optimizing computational efforts where they are needed most.

Furthermore, another objective of this study is to provide a comprehensive set of three-dimensional numerical validations. 

The paper is organized as follows. Section \ref{sec:VEM} introduces the continuous model and outlines the construction of the discrete polyhedral spaces. Section \ref{sec:theory} is dedicated to the comprehensive stability and error analysis. Finally, Section \ref{sec:num} validates the theoretical convergence rates through a series of three-dimensional numerical experiments.
Throughout the paper, we adopt standard notations for Sobolev norms and semi-norms.

\section{Problem Setting and Virtual Element Discretization}\label{sec:VEM}

\subsection{The model problem}
Let $\Omega \subset \R^3$ be a polyhedral domain, with boundary $\Gamma$.
The steady advection-diffusion-reaction problem reads as:
\begin{equation}\label{eq:problema-continuo}
\left\{
\begin{aligned} 
&\mbox{find a function $u:\Omega \to \R$ such that:}\\
&- \nabla \cdot ( \beps(\xx) \nabla u) + \bb(\xx) \cdot \nabla u + \sigma(\xx) u 
= f \qquad &\text{in }\Omega , \\
&u = 0 \qquad &\text{on }\Gamma ,
\end{aligned}
\right.
\end{equation}
where $\beps \in [L^\infty(\Omega)]^{3 \times 3}$ is the diffusion tensor, assumed to be symmetric, bounded, and uniformly positive definite, i.e., there exist constants $0 < \beps_0 \leq \beps_1$ such that 
\begin{equation}\label{eq:ellitticità}
\beps_0 |\boldsymbol{\xi}|^2 \leq \boldsymbol{\xi}^T\beps(\xx) \boldsymbol{\xi} \leq \beps_1 |\boldsymbol{\xi}|^2
\end{equation}
for all $\boldsymbol{\xi} \in \R^3$ and almost every $\xx \in \Omega$. The vector field $\bb \in [W^{1, \infty}(\Omega)]^3$ represents the advection velocity, which we assume to be divergence-free, i.e., $\nabla \cdot \bb = 0$ in $\Omega$. The term $\sigma \in L^\infty(\Omega)$ is the reaction coefficient, assumed to be strictly positive, $\sigma(\xx) \geq \sigma_0 > 0$ for almost every $\xx \in \Omega$. Finally, $f \in L^2(\Omega)$ is the external source term. 
The boundary $\Gamma$ is partitioned into two disjoint subsets
\begin{equation*}
\Gamma_{\text{in}} \coloneqq \{ \xx \in \Gamma \, | \, \bb(\xx) \cdot \nn < 0 \}
\quad \text{and} \quad 
\Gamma_{\text{out}} \coloneqq \{ \xx \in \Gamma \, | \, \bb(\xx) \cdot \nn \geq 0 \} \, ,
\end{equation*}
where $\nn$ denotes the outward unit normal vector. 
The variational formulation of \eqref{eq:problema-continuo} reads as
\begin{equation}\label{eq:problem-c}
\left\{
\begin{aligned} 
&\text{find $u \in V \coloneqq H^1_0(\Omega)$ such that: } \\
& a(u,v) + \bskew(u,v) +  c(u,v)  = F(v) \qquad \forall v \in V \, .
\end{aligned}
\right.
\end{equation}
The bilinear forms are defined as
\begin{equation}
\label{eq:a-c}
a(u,  v) \coloneqq \int_{\Omega} (\beps(\xx) \nabla u) \cdot \nabla v \, {\rm d}\Omega
\qquad  \forall u, v \in V \, ,
\end{equation}
\begin{equation}
\label{eq:b-c}
\bskew (u,v) \coloneqq \dfrac{1}{2}\bigl(b(u,  v) - b(v, u)\bigr) \quad\text{with}\quad b(u,  v) \coloneqq \int_{\Omega} (\bb(\xx) \cdot \nabla u) \, v \, {\rm d}\Omega 
\qquad \forall u, v \in V,
\end{equation}
\begin{equation}
\label{eq:c-c}
c(u,  v) \coloneqq \int_{\Omega}  \sigma(\xx) \, u  v \, {\rm d}\Omega
\qquad \forall u, v \in V.
\end{equation}
Note that, due to the homogeneous Dirichlet boundary conditions and the divergence-free assumption on the advection field, $b(u,v)$ is skew-symmetric, hence $b(u,v) = \bskew(u,v)$. 
At the discrete level, we will consider polynomial projections of the virtual functions. This breaks the exact skew-symmetry, which is then restored by explicitly adopting the skew-symmetric formulation.
Similarly, the right-hand side is defined as
\[
F(v)
=
\int_\Omega f \, v \,  {\rm d} \Omega \, .
\]
Standard numerical approximations of problem \eqref{eq:problem-c} notoriously suffer from non-physical oscillations in advection dominated regimes, i.e., whenever the diffusion tensor $\beps$ is relatively small compared to the advective field $\bb$.
The present work adopts the CIP approach, originally formulated in \cite{burman:2004}, to overcome these difficulties.
This approach penalizes the jumps of the gradients across the element boundaries to reduce the number of oscillations.

\subsection{Mesh assumptions}

We consider a family of meshes $\{\Omega_h\}_h$, each representing a conforming partition of the domain $\Omega$ into polyhedra. We denote by $E$ a generic element of the mesh, by $F$ a generic face, and by $e$ a generic edge. 
For any geometric entity $D \in \{E, F, e\}$, we denote its diameter by $h_D$, its measure by $|D|$, and its centroid by $\xx_D$. 
We make the following standard VEM assumptions:
\medskip

\textbf{Mesh assumptions (A1):} there exists a positive constant $\rho$, independent of the mesh size $h$, such that:
\begin{itemize}
    \item each polyhedron $E$ and each face $F$ are star-shaped with respect to a ball of radius greater than or equal to $\rho h_E$ and $\rho h_F$, respectively;
    \item the inequality $h_f \geq \rho h_E$ holds for every element $E$ and every face $F$ of $E$;
    \item the mesh is locally quasi-uniform, i.e., there exists a constant $C \geq 1$ such that $h_E \leq C \, h_{K}$ for any pair of adjacent elements $E$ and $K$.
\end{itemize}
\begin{remark}
Note that the first two assumptions already ensure local quasi-uniformity. We include the third condition for the sake of clarity and to emphasize the departure from the strategy adopted in \cite{VEM-CIP-Conf}.
\end{remark}
Furthermore, we assume that for every polyhedron $ E \in \Omega_h$, the diffusion coefficient  satisfies $\beps \in [W^{1,\infty}(E)]^{3 \times 3}$.

\medskip

For any geometric entity $D \in \{E, F\}$, we define the set of scaled monomials of degree up to $n$ as:
\begin{equation}\label{eq:scaled-monomials}
\mathcal{M}_n(D) \coloneqq \left\{ \left( \frac{\xx - \xx_D}{h_D} \right)^{\boldsymbol{\alpha}} : |\boldsymbol{\alpha}| \leq n \right\} ,
\end{equation}
where $\boldsymbol{\alpha}$ is a multi-index and $|\boldsymbol{\alpha}|$ denotes its total degree.
For any element or face $D$, we introduce the following local polynomial projection operators:
\begin{itemize}
    \item the \textbf{$\boldsymbol{L^2}$-projection} $\Pi_n^{0, D} \colon L^2(D) \to \mathbb P_n(D)$, defined for any $v \in L^2(D)$ by the orthogonality condition:
    \begin{equation*}
        \int_D (v - \Pi_n^{0, D} v) q_n \, \mathrm{d}D = 0 \qquad \text{for all } q_n \in \mathbb P_n(D)\,.
    \end{equation*}
    This operator naturally extends to vector-valued $\boldsymbol{\Pi}_n^{0, D} \colon [L^2(D)]^3 \to [\mathbb P_n(D)]^3$;

    \item the \textbf{$\boldsymbol{H^1}$-seminorm projection} $\Pi_n^{\nabla, D} \colon H^1(D) \to \mathbb P_n(D)$, defined for any $v \in H^1(D)$ as the unique solution to the system:
    \begin{equation*}
        \left\{
        \begin{aligned}
            & \int_D \nabla (v - \Pi_n^{\nabla, D} v) \cdot \nabla q_n \, \mathrm{d}D = 0 \qquad \text{for all } q_n \in \mathbb P_n(D)\,, \\
            & \int_{\partial D} (v - \Pi_n^{\nabla, D} v) \, \mathrm{d}s = 0\,.
        \end{aligned}
        \right.
    \end{equation*}
\end{itemize}
The corresponding global projection operators, $\Pi_n^{0} \colon L^2(\Omega) \to \mathbb P_n(\Omega_h)$ and $\Pi_n^{\nabla} \colon H^1(\Omega_h) \to \mathbb P_n(\Omega_h)$, are defined element-wise by setting:
\begin{equation*}
    (\Pi_n^{0} v)|_E \coloneqq \Pi_n^{0, E} (v|_E) \qquad \text{and} \qquad (\Pi_n^{\nabla} v)|_E \coloneqq \Pi_n^{\nabla, E} (v|_E) \qquad \text{for all } E \in \Omega_h\,.
\end{equation*}
Finally, we recall a classical approximation result for polynomials on star-shaped domains (see, e.g., \cite{brenner-scott:book}).

\begin{lemma}[Polynomial approximation]
\label{lm:bramble}
Under the mesh regularity assumptions stated previously, for any element $E \in \Omega_h$ and any sufficiently smooth function $\phi$ defined on $E$, the following approximation bounds hold:
\begin{equation*}
\begin{alignedat}{2}
    \|\phi - \Pi_n^{0, E} \phi\|_{W^{m,p}(E)} &\lesssim h_E^{s-m} |\phi|_{W^{s,p}(E)} \qquad && \text{for } s, m \in \mathbb{N}, \ m \leq s \leq n+1, \ p \in [1, \infty]\,, \\
    \|\phi - \Pi_n^{\nabla, E} \phi\|_{m, E} &\lesssim h_E^{s-m} |\phi|_{s, E} \qquad && \text{for } s, m \in \mathbb{N}, \ m \leq s \leq n+1, \ s \geq 1\,, \\
    \|\nabla \phi - \boldsymbol{\Pi}_n^{0, E} \nabla \phi\|_{m, E} &\lesssim h_E^{s-1-m} |\phi|_{s, E} \qquad && \text{for } s, m \in \mathbb{N}, \ m+1 \leq s \leq n+1, \ s \geq 1\,.
\end{alignedat}
\end{equation*}
\end{lemma}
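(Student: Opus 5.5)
The three estimates follow from the Bramble--Hilbert lemma on star-shaped domains combined with the stability of the two projections. The Bramble--Hilbert lemma on a domain star-shaped with respect to a ball (as in \cite{brenner-scott:book}) provides, for every $\phi \in W^{s,p}(E)$ with $s \le n+1$, a polynomial $q \in \mathbb P_n(E)$ (the averaged Taylor polynomial) such that
\[
|\phi - q|_{W^{j,p}(E)} \lesssim h_E^{s-j}|\phi|_{W^{s,p}(E)}, \qquad 0\le j\le s .
\]
The hidden constant depends only on $n$, $s$, $p$ and the chunkiness parameter, which assumption (A1) bounds in terms of $\rho$. I will obtain this uniformity by mapping $E$ to a reference configuration of unit diameter and using the scaling $\mathbf{x}\mapsto (\mathbf{x}-\mathbf{x}_E)/h_E$. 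Under this map the $W^{j,p}$ seminorms scale by $h_E^{3/p-j}$, so the powers of $h_E$ above come out correctly.

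\textbf{$L^2$-projection.} Since $\Pi_n^{0,E}$ reproduces $\mathbb P_n(E)$, we have $\phi - \Pi_n^{0,E}\phi = (\phi - q) - \Pi_n^{0,E}(\phi - q)$. It therefore suffices to bound $|\Pi_n^{0,E} w|_{W^{j,p}(E)}$ for $w = \phi - q$.
\begin{itemize}
\item The inverse inequality for polynomials on $E$ gives $|p_n|_{W^{j,p}(E)}\lesssim h_E^{-j}\|p_n\|_{L^p(E)}$. This holds uniformly because $E$ contains a ball of radius $\rho h_E$ and is contained in a ball of radius $h_E$, so all norms on $\mathbb P_n$ are equivalent with constants depending only on $\rho$.
\item The $L^p$-stability $\|\Pi_n^{0,E}w\|_{L^p(E)}\lesssim \|w\|_{L^p(E)}$ holds for all $p\in[1,\infty]$. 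I will prove it by expanding in an $L^2(E)$-orthonormal basis $\{m_i\}$ of $\mathbb P_n(E)$ and using $\|m_i\|_{L^\infty}\lesssim |E|^{-1/2}$ together with Hölder's inequality.
\end{itemize}
Combining the two bullets with the Bramble--Hilbert bound $\|w\|_{L^p}\lesssim h_E^{s}|\phi|_{W^{s,p}}$ yields the first estimate; summing over $j\le m$ gives the full norm.

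\textbf{$H^1$-seminorm projection.} The same reproduction argument applies. For the gradient, $|\Pi_n^{\nabla,E}w|_{1,E}\le |w|_{1,E}$, since $\Pi_n^{\nabla,E}$ is an $H^1$-seminorm orthogonal projection onto $\mathbb P_n$ modulo constants. For the $L^2$ part, I will write $\Pi_n^{\nabla,E}w = \bar p + c$ with $\bar p$ of zero boundary mean. The constant $c$ is the boundary average of $w$, and it is controlled by the scaled trace inequality $\|w\|_{L^1(\partial E)}\lesssim |\partial E|^{1/2}\big(h_E^{-1/2}\|w\|_{0,E}+h_E^{1/2}|w|_{1,E}\big)$, valid on star-shaped Lipschitz domains. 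The part $\bar p$ is controlled by a Poincaré-type inequality for polynomials with zero boundary mean. Higher seminorms then follow from inverse estimates. For $s\ge1$ this gives the second bound.

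\textbf{Gradient projection.} The third bound follows by applying the first one componentwise to $\nabla\phi \in [H^{s-1}(E)]^3$, with $s-1\le n$ in place of $s$ and $p=2$.

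\textbf{Main obstacle.} The only delicate point is that all constants must depend on $\rho$ alone. Polyhedra star-shaped with respect to a ball are not images of a single reference element, so standard affine-mapping arguments do not apply. I will therefore rely on the Dupont--Scott version of Bramble--Hilbert, whose constant depends only on the chunkiness parameter $\le 1/\rho$. I will also check that the trace and inverse inequalities used above hold uniformly for this class of domains; both do, by star-shapedness and the containment of a ball of radius $\rho h_E$.
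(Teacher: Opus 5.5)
Your proposal is correct and matches the paper: the paper gives no proof of this lemma and simply cites \cite{brenner-scott:book}, and your Bramble--Hilbert (Dupont--Scott) argument combined with the $L^p$-stability, inverse, trace and Poincaré estimates on domains star-shaped with respect to a ball of radius $\rho h_E$ is exactly the classical argument behind that citation. One detail to make explicit: when you sum over $j\le m$ to get the full $W^{m,p}$ norm, merging $\sum_{j\le m} h_E^{s-j}$ into the single factor $h_E^{s-m}$ requires $h_E\lesssim 1$ (e.g.\ $h_E\le \mathrm{diam}(\Omega)$), which is harmless.
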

\subsection{Virtual Element Spaces}

Before introducing the virtual space on a polyhedron $E$, it is necessary to introduce the virtual space on a face $F \subset \partial E$. 
Given a positive integer $k$, we define the face virtual space as:
\begin{equation}\label{eq:vem-space-face}
\begin{aligned}
V_h^k(F) \coloneqq \bigl\{ v_h \in H^1(F) \cap C^0(\partial F) : \, & v_h|_e \in \mathbb{P}_k(e) \quad \forall e \subset \partial F, \\
& \Delta_{F} v_h \in \mathbb{P}_{k}(F), \\
& (v_h - \PiNablaF v_h, q_k)_{0,F} = 0 \quad \forall q_k \in \mathbb{P}_k(F) \setminus \mathbb{P}_{k-2}(F) \bigr\},
\end{aligned}
\end{equation}
where $\Delta_{F}$ denotes the 2D Laplacian operator on the face $F$.
Given a polyhedron $E\in\Omega_h$, we define the 3D virtual element space of order $k \in \mathbb{N}$ \cite{Ahmad-Alsaedi-Brezzi-Marini-Russo:2013, BeiradaVeiga-Brezzi-Marini-Russo:2014} as
\begin{equation}\label{eq:vem-space-element}
\begin{aligned}
V_h^k(E) \coloneqq
\bigl\{
v_h \in H^1(E)\cap C^0(\partial E) :  \, & v_h|_F \in V_h^k(F) \quad \forall F \subset \partial E, \\
&\Delta v_h \in \mathbb{P}_{k}(E), \\
&(v_h - \PiNablaE v_h, p_k)_{0,E} = 0 \quad \forall p_k \in \mathbb{P}_k(E) \setminus \mathbb{P}_{k-2}(E)
\bigr\}.
\end{aligned}
\end{equation}
The degrees of freedom for $V_h^k(E)$ are typically chosen as follows:
\begin{itemize}
    \item the values of $v_h$ at the vertices of the polyhedron $E$;
    \item for $k > 1$, the values of $v_h$ at the $k-1$ inner points of a Gauss-Lobatto quadrature rule for each edge;
    \item for $k > 1$, the moments of $v_h$ on each face $F \subset \partial E$: $\frac{1}{|F|}\int_F v_h \, m_{k-2} \, {\rm d}F \quad \forall m_{k-2} \in \mathcal{M}_{k-2}(F)$;
    \item for $k > 1$, the internal moments on the element $E$: $\frac{1}{|E|}\int_E v_h \, m_{k-2} \, {\rm d}E \quad \forall m_{k-2} \in \mathcal{M}_{k-2}(E)$.
\end{itemize}
This choice of the DoFs guarantees that all the DoFs scale as $\mathcal O(1).$
The global virtual element space is obtained by gluing the local spaces
\begin{equation}\label{eq:global-virtual-space}
V^k_h(\Omega_h) = \{v_h \in H^1(\Omega) \quad \text{s.t.} \quad v_h|_E \in V^k_h(E) \quad \text{for all $E \in \Omega_h$} \} \, .
\end{equation}
\begin{remark}
We highlight that, since the boundary conditions are imposed weakly, the discrete space is not required to be a subspace of $H_0^1(\Omega)$.
\end{remark}
We conclude this section by recalling the standard interpolation estimate for virtual element spaces.
\begin{lemma}[Approximation using virtual element functions]
\label{lm:interpolation}
Under the assumption \textbf{(A1)} for any $v \in H^{s+1}(\Omega_h)$, there exists $v_I \in V_h^k(\Omega_h)$ such that for all $E \in \Omega_h$ it holds
\[
\|v - v_I\|_{0,E} + h_E \|\nabla (v - v_I)\|_{0,E} \lesssim h_E^{s+1} |v|_{s+1,E} \, , 
\]
where $0 < s \le k$.
\end{lemma}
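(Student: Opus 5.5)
The plan is to build $v_I$ in two stages: first approximate $v$ by a piecewise polynomial, then take the virtual-element interpolant of a continuous quasi-interpolant of $v$ whose degrees of freedom match the polynomial approximation. Concretely, I would take $v_I$ to be the element of $V_h^k(\Omega_h)$ whose degrees of freedom coincide with those of $v$: vertex and edge Gauss--Lobatto values, face moments against $\mathcal{M}_{k-2}(F)$ and internal moments against $\mathcal{M}_{k-2}(E)$. This requires $v$ to be continuous. For $s+1\ge 2$ and $d=3$ pointwise values are defined by the Sobolev embedding $H^2\subset C^0$. The low-regularity range $s<1$ would instead be handled by a Scott--Zhang/Cl\'ement-type variant, with averages on faces replacing point values. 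Since the global space is obtained by gluing, and the degrees of freedom on shared vertices, edges and faces are single-valued, this $v_I$ lies in $H^1(\Omega)$.

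The local estimate follows the standard VEM argument on each $E$. First, I take $v_\pi:=\Pi_k^{0,E}v$, or an averaged Taylor polynomial on the star-shaped ball, so that Lemma \ref{lm:bramble} gives $\|v-v_\pi\|_{0,E}+h_E|v-v_\pi|_{1,E}\lesssim h_E^{s+1}|v|_{s+1,E}$. Since $\mathbb{P}_k(E)\subset V_h^k(E)$ and the interpolant reproduces polynomials, $v_I-v_\pi=(v-v_\pi)_I$. It therefore suffices to prove the stability bound
\[
|w_I|_{1,E}\lesssim h_E^{-1}\|w\|_{0,E}+|w|_{1,E}+\sum_{j=2}^{s+1} h_E^{j-1}|w|_{j,E}
\]
for $w=v-v_\pi$, together with the analogous bound for $\|w_I\|_{0,E}$. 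Inserting the polynomial estimates then yields the claim. The $L^2$ part follows from the $H^1$ part by a Poincar\'e-type inequality on $E$, using that the mean of $w_I$ is controlled through the degrees of freedom.

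The stability bound is the main obstacle. Here I would follow Beir\~ao da Veiga, Lovadina, Russo and Brenner--Sung. Under (A1) the degrees of freedom scale as $\mathcal{O}(1)$, and one has the norm equivalence $h_E^{-1}\|w_I\|_{0,E}+|w_I|_{1,E}\simeq h_E^{1/2}|\mathrm{dof}(w_I)|_{\ell^2}$. To prove it, I would first work face by face in 2D: harmonic-type lifting of the edge polynomials, then control of $\Delta_F$ via the enhancement condition. After that I would lift to $E$ by the analogous 3D argument, using star-shapedness with respect to a ball of radius $\rho h_E$ to get $h$-independent constants through scaling to a reference size. Bounding the degrees of freedom of $w$ by $h_E^{-1/2}(\ldots)$ then uses the scaled trace inequality and $\|w\|_{L^\infty(E)}\lesssim h_E^{-3/2}(\|w\|_{0,E}+h_E|w|_{1,E}+h_E^2|w|_{2,E})$. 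This fixes the need for $s\ge1$ in the nodal version.

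For $0<s<1$, I would use the nonlocal variant. The error is then bounded on a patch $\omega_E$, and the local quasi-uniformity in (A1) converts patch quantities to $h_E$. If the statement is read strictly elementwise, I would restrict to $s\ge1$ with nodal degrees of freedom. The fractional case would follow by interpolation of the bounds for $s=0$ and $s=1$.
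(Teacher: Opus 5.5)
The paper gives no proof of this lemma. It only recalls it as the standard VEM interpolation estimate, so there is no competing argument to compare against. Your outline is the standard proof from the VEM literature and is sound for $s\ge 1$: DoF interpolant, elementwise polynomial reproduction $v_I-v_\pi=(v-v_\pi)_I$, and a DoF-stability bound fed by the scaled $L^\infty$ Sobolev inequality.

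The step you single out as the main obstacle is already available in the paper, so you need not redo the face-by-face lifting. Combining Proposition~\ref{prp:norm equivalence} with Proposition~\ref{prp:vem-inverse} (both imported results, so nothing is circular) gives $\|z_h\|_{0,E}+h_E|z_h|_{1,E}\lesssim h_E^{3/2}\bigl(\sum_{\nu\in\mathcal N_E}\mathrm{dof}^E_\nu(z_h)^2\bigr)^{1/2}$ for all $z_h\in V_h^k(E)$. This also settles the $L^2$ part directly. Your Poincar\'e/mean-value route would be awkward for $k=1$, where there are no internal moments.

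Passing from $\|w\|_{L^\infty(E)}$ to the $\ell^2$ norm of the DoFs costs a factor $(\#\mathcal N_E)^{1/2}$. You therefore need a uniform bound on the number of DoFs per element. This is implicit in the paper's use of Proposition~\ref{prp:norm equivalence}, but it is not literally part of \textbf{(A1)}, which does not control the number of edges of a face.

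There are also two issues with the statement itself.

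First, the hypothesis must include $v\in H^1(\Omega)$. If $v$ lies only in the broken space $H^{s+1}(\Omega_h)$, take $s=k$ and $v$ a discontinuous piecewise $\mathbb P_k$ function. The right-hand side then vanishes on every element, yet no $v_I\in V_h^k(\Omega_h)\subset H^1(\Omega)$ can reproduce $v$. Your claim that shared DoFs are single-valued silently uses global continuity. That holds in the paper's application, because \textbf{(A2)} requires $u\in H^2(\Omega)$.

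Second, for $0<s<1$ a Scott--Zhang/Cl\'ement variant only gives bounds in terms of $|v|_{s+1,\omega_E}$ on a patch, not the elementwise bound as stated. Interpolating such local seminorm estimates between $s=0$ and $s=1$ is also not automatic; a fractional Bramble--Hilbert lemma is the usual tool. Since the lemma is only used with $1<s\le k$, restricting to $s\ge1$, as you propose, is the right call.
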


\subsection{Discrete problem}

As is standard in the VEM context, we replace the bilinear forms introduced in \eqref{eq:a-c}--\eqref{eq:c-c} and the right-hand side with their discrete counterparts.
First, we note that each of these forms can be decomposed into local contributions by restricting the integral to a polyhedron $E \in \Omega_h$:
\[
\begin{aligned}
a(u,v) &\eqqcolon \sum_{E \in \Omega_h} a^E(u,v) \, , \qquad
&b(u,v) \eqqcolon \sum_{E \in \Omega_h} b^E(u,v) \, ,  \\
c(u,v) &\eqqcolon \sum_{E \in \Omega_h} c^E(u,v) \, , \qquad
&F(v) \eqqcolon \sum_{E \in \Omega_h} F^E(v) \, .
\end{aligned}
\]
We now introduce the discrete bilinear forms. For the diffusive contribution, we define:
\begin{equation}\label{eq:ah_E}
a_h^E(u_h, v_h)
\coloneqq
\int_E \beps \, \PiVecMinusOneE \nabla u_h \cdot \PiVecMinusOneE \nabla  v_h \, {\rm d }E
+
h_E \, \bar{\beps}_E \,  S^E_h\bigl( (I - \PiZeroE) u_h, (I-\PiZeroE) v_h \bigr) \, ,
\end{equation}
where $\bar{\beps}_E$ is defined as the trace of the diffusion tensor $\beps$ evaluated at the barycenter of the polyhedron $E$. 
As standard in the VEM context, the bilinear form $S_h^E$ satisfies
\begin{equation}\label{eq:stab-prop}
|v_h|^2_{1,E} \lesssim h_E S_h^E(v_h,v_h) \lesssim |v_h|^2_{1,E} \qquad \forall v_h \in \ker(\PiZeroE)\, .
\end{equation}
Consequently, it holds that
\begin{equation}\label{eq:stab-ah}
a^E(v_h, v_h) \lesssim a_h^E(v_h, v_h) \lesssim a^E(v_h, v_h) \qquad \forall v_h \in V_h^k(E) \, .
\end{equation}
Similarly, the reaction bilinear form is discretized as:
\begin{equation}\label{eq:ch_E}c_h^E(u_h, v_h)\coloneqq\int_E \sigma \,  \PiZeroE u_h \, \PiZeroE v_h \, {\rm d }E
+
\bar{\sigma}_E|E| S_h^E\bigl( (I - \PiZeroE) u_h, (I-\PiZeroE) v_h \bigr) \, ,
\end{equation}
where $\bar{\sigma}_E$ is the mean value of $\sigma$ in the element $E$.
Similarly to \eqref{eq:stab-ah}, we have that
\begin{equation}\label{eq:stab-ch}
c^E(v_h, v_h) \lesssim c_h^E(v_h, v_h) \lesssim c^E(v_h, v_h) \qquad \forall v_h \in V_h^k(E) \, .
\end{equation}
\begin{remark}
Different stabilization terms could have been chosen for the various discrete bilinear forms. For the sake of simplicity, we assume that all these terms are identical.
In particular, we adopt the classical \texttt{dofi-dofi} stabilization.
\end{remark}
The convective bilinear form is discretized by 
\begin{equation}\label{eq:bh_E}
b_h^E(u_h, v_h)\coloneqq\int_E \bigl( \bb \cdot \nabla \PiZeroE u_h \bigr) \, \PiZeroE v_h \, {\rm d }E 
+ 
\sum_{F \subset \partial E}
\int_{F} (\bb \cdot \nn^E) (\PiZeroF - \PiZeroE) u_h  \, \PiZeroF v_h \, {\rm d}F
\, .
\end{equation}
As stated in the model problem, we consider the skew symmetric part
\[
\bskewEh(u_h,v_h) \coloneqq \frac{1}{2} \bigl( b_h^E(u_h, v_h) - b_h^E(v_h, u_h)\bigr) \, .
\]
Before defining the stabilization and boundary terms, for any element $E$ adjacent to the domain boundary $\Gamma$, we define the boundary portion $\Gamma_E \coloneqq \partial E \cap \Gamma$. Furthermore, for any internal face $F = \partial E^+ \cap \partial E^-$ shared by two elements $E^+$ and $E^-$, we define the jump of a sufficiently regular vector field $\boldsymbol{\tau}$ across $F$ as $\jump{\boldsymbol{\tau}} \coloneqq \boldsymbol{\tau}|_{E^+} - \boldsymbol{\tau}|_{E^-}$.
In order to stabilize the method in the hyperbolic limit, we introduce the following bilinear form that penalizes the jump of the gradients across the element boundaries.
We define the penalty parameters as:
\begin{equation}
\gamma_F \coloneqq \gamma \, \|\bb\|^2_{[L^\infty(F)]^3} \quad \text{and} \quad \gamma_E \coloneqq \gamma \, \|\bb\|^2_{[L^\infty(E)]^3} \, ,
\end{equation}
where $\gamma > 0$ is a user-defined constant factor. The stabilization form reads:
\[
\begin{aligned}
J^E_h(u_h, v_h)
\coloneqq
\frac{1}{2}
\sum_{F \subset \partial E \setminus \Gamma_E} \int_F \gamma_F \, h_F^2 \,&\jump{\nabla \PiZero u_h} \cdot \jump{\nabla \PiZero v_h} {\rm d }F \\ &\qquad+
\gamma_E \, |\partial E| \, S^E_h\bigl( (I - \PiZeroE) u_h, (I-\PiZeroE) v_h \bigr) \, .
\end{aligned}
\]
Finally, the Nitsche bilinear form is defined as
\begin{equation*}\label{eq:NE}
\begin{aligned}
\mathcal{N}_h^E(u_h,v_h) 
&\coloneqq
\sum_{F \subset \Gamma_E} \Bigg(
- \int_F (\beps \PiVecMinusOneE \nabla v_h \cdot \nn^E) \, \PiZeroF u_h \, {\rm d}F 
- \int_F (\beps \PiVecMinusOneE \nabla u_h \cdot \nn^E) \, \PiZeroF v_h \, {\rm d}F \\
&\qquad\qquad\quad 
+ \frac{\|\beps\|_{[L^\infty(F)]^{3 \times 3}}}{\delta h_E} \int_F \PiZeroF u_h \PiZeroF v_h \, {\rm d}F   
+ \frac{1}{2} \int_F \vert \bb \cdot \nn^E \vert \, \PiZeroF u_h \PiZeroF v_h \, {\rm d}F \Bigg) \, .
\end{aligned}
\end{equation*}
The right-hand side is obtained by projecting the test functions into the polynomial space
\[
F_h^E(v_h) = \int_E f \, \PiZeroE v_h \, {\rm d}E \, .
\]
We define the cumulative bilinear form $A_\cip^E(\cdot,\cdot)$ as
\[
A_\cip^E(u_h, v_h) = a^E_h(u_h,v_h) + \bskewEh(u_h,v_h) + c^E_h(u_h,v_h) + J^E_h(u_h,v_h) + \mathcal{N}_h^E(u_h,v_h) \, .
\]
The global forms are obtained by summing the local contributions from all the polyhedra
\[
\begin{aligned}
a_h(u_h, v_h) &\coloneqq \sum_{E \in \Omega_h} a_h^E(u_h, v_h) \, , \qquad
&b_h(u_h, v_h) &\coloneqq \sum_{E \in \Omega_h} b_h^E(u_h, v_h) \, , \\
b_h^{\rm{skew}}(u_h,v_h) &\coloneqq \sum_{E \in \Omega_h} \bskewEh(u_h, v_h) \, , \qquad
&c_h(u_h, v_h) &\coloneqq \sum_{E \in \Omega_h} c_h^E(u_h, v_h) \, , \\
J_h(u_h,v_h) &\coloneqq \sum_{E \in \Omega_h} J_h^E(u_h, v_h) \, , \qquad
&\mathcal{N}_h(u_h, v_h) &\coloneqq \sum_{E \in \Omega_h} \mathcal{N}_h^E(u_h, v_h) \, , \\
F_h(v_h) &\coloneqq \sum_{E \in \Omega_h} F_h^E(v_h) \, , \qquad
&A_\cip(u_h, v_h) &\coloneqq \sum_{E \in \Omega_h} A_\cip^E(u_h, v_h) \, .
\end{aligned}
\]
The discrete virtual element problem reads as
\begin{equation}\label{eq:problema-discreto}
\left\{
\begin{aligned}
&\text{find $u_h \in V_h^k(\Omega_h)$ such that:} \\
&A_\cip(u_h, v_h) = F_h(v_h) \quad \forall v_h \in V_h^k(\Omega_h) \, .
\end{aligned}
\right .
\end{equation}
As is standard in the VEM framework, the use of polynomial projections in the definition of the bilinear forms implies that the continuous solution of problem \eqref{eq:problem-c} does not generally satisfy the discrete problem \eqref{eq:problema-discreto}. Nevertheless, assuming sufficient regularity for the exact solution, specifically $u \in H^2(\Omega) \cap H_0^1(\Omega)$, the following consistency identity holds:
\begin{equation}
\label{eq:consistency-1}
\tilde{A}_\cip(u , \, v_h) = F(v_h) \qquad \forall v_h \in V^k_h(\Omega_h) \, .
\end{equation}
The global bilinear form $\tilde{A}_\cip(\cdot,\cdot)$ is decomposed into local contributions $\tilde{A}^E_\cip(\cdot,\cdot)$, defined as:
\begin{equation}
\label{eq:AE-c}
\tilde{A}^E_\cip (u, v_h) 
\coloneqq 
a^E(u, v_h) 
+ b^{\rm{skew},E}(u, v_h) 
+ c^E(u, v_h) 
+ \tilde{\mathcal{N}}_h^E (u, v_h) \, . 
\end{equation}
The modified Nitsche form $\tilde{\mathcal{N}}_h^E(\cdot, \cdot)$ is given by:
\begin{equation*}
\label{eq:tNE}
\begin{aligned}
\tilde{\mathcal{N}}_h^E(u,v_h) 
&\coloneqq
\sum_{F \subset \Gamma_E} \Bigg(
- \int_F (\beps \PiVecMinusOneE \nabla v_h \cdot \nn^E) \, u \, {\rm d}F 
- \int_F (\beps \nabla u \cdot \nn^E) \, v_h \, {\rm d}F \\
&\qquad\qquad\quad 
+ \frac{\|\beps\|_{[L^\infty(F)]^{3 \times 3}}}{\delta h_E} \int_F u \, \PiZeroF v_h \, {\rm d}F   
+ \frac{1}{2} \int_F \vert \bb \cdot \nn^E \vert \, u \, \PiZeroF v_h \, {\rm d}F \Bigg) \, .
\end{aligned}
\end{equation*}

\section{Theoretical analysis}\label{sec:theory}

\subsection{Preliminary results}

We begin by recalling from \cite{VEM-Estimates-2D, VEM-Estimates-3D} the following norm equivalence for virtual functions in two and three dimensions.

\begin{proposition}\label{prp:norm equivalence}
Under assumption \textbf{(A1)}, for any element $E \in \Omega_h$, there exist two positive constants $C_1$ and $C_2$, independent of the mesh size $h_E$, such that
\[
C_1 h_E^3 \sum_{\nu \in \mathcal N_E} \mathrm{dof}^E_\nu(v_h)^2
\leq
\| v_h \|^2_{0,E} 
\leq
C_2 h_E^3 \sum_{\nu \in \mathcal N_E} \mathrm{dof}^E_\nu(v_h)^2 \qquad \forall v_h \in V_h^k(E)\, ,
\]
where $\mathcal{N}_E$ denotes the set of local DoFs associated with the element $E$, and $\mathrm{dof}^E_\nu(v_h)$ is the value of the DoF $\nu$ evaluated for $v_h$. 
Similarly, for any face $F$, it holds that
\[
C_1 h_F^2 \sum_{\nu \in \mathcal N_F} \mathrm{dof}^F_\nu(v_h)^2
\leq
\| v_h \|^2_{0,F} 
\leq
C_2 h_F^2 \sum_{\nu \in \mathcal N_F} \mathrm{dof}^F_\nu(v_h)^2 \qquad \forall v_h \in V_h^k(F) \, ,
\]
where $\mathcal{N}_F$ denotes the set of local DoFs associated with the face $F$, and $\mathrm{dof}^F_\nu(v_h)$ is the value of the DoF $\nu$ on the face $F$. 
We remark that $\mathcal{N}_F$ includes not only the moments internal to the face but also the DoFs associated with the edges and vertices belonging to $\partial F$.
\end{proposition}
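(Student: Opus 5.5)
Although the paper only recalls this result from the cited works, I would prove it by a scaling argument combined with a compactness/finite-dimensionality argument, organized hierarchically: first edges, then faces, then the element. Since the DoFs are all $\mathcal O(1)$-scaled (point values and moments against scaled monomials), the statement says that $\|v_h\|_{0,D}^2 / h_D^{d}$ is uniformly equivalent to the Euclidean norm of the DoF vector, with $d=2$ for faces and $d=3$ for elements. On an edge $e$ the space is $\mathbb P_k(e)$, so the claim follows by mapping to a reference interval. There the Gauss--Lobatto values plus endpoint values form a unisolvent set, and all norms on a finite-dimensional space are equivalent, which gives $\|v_h\|^2_{0,e}\simeq h_e\sum \mathrm{dof}^2$.

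\textbf{Face case.} For the face estimate I would split $v_h\in V_h^k(F)$ as $v_h = v_\partial + v_0$. Here $v_\partial$ is determined by the boundary data, with $\Delta_F v_\partial$ chosen to match the enhancement and the internal moments set to zero. The function $v_0$ vanishes on $\partial F$ and carries the internal moments.

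For the upper bound I would proceed in three steps.
\begin{itemize}
\item Control $\|v_h\|_{0,F}$ through the elliptic problem $\Delta_F v_h = p\in\mathbb P_k(F)$ with $v_h|_{\partial F}=g$.
\item Bound $p$ in a scaled norm by the moments of $v_h$ against $\mathbb P_{k-2}$. For the higher moments in $\mathbb P_k\setminus\mathbb P_{k-2}$, use the enhancement condition, since they equal moments of $\PiNablaF v_h$, which are in turn controlled by the DoFs.
\item Bound $g$ by the edge estimates, using a harmonic-type lifting with constants depending only on the star-shapedness parameter $\rho$. This would rely on the trace inequality $\|g\|_{0,\partial F}^2\lesssim h_F\sum \mathrm{dof}^2$ and the inverse-type estimates of \cite{VEM-Estimates-2D}.
\end{itemize}

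For the lower bound I would use the following bounds.
\begin{itemize}
\item The internal moments satisfy $|\frac{1}{|F|}\int_F v_h m|\le |F|^{-1/2}\|v_h\|_{0,F}\|m\|_{0,F}/|F|^{1/2}\lesssim h_F^{-1}\|v_h\|_{0,F}$, by Cauchy--Schwarz together with $\|m\|_{L^\infty}\lesssim 1$.
\item The boundary DoFs satisfy $\sum \mathrm{dof}_\partial^2 \lesssim h_F^{-1}\|v_h\|_{0,\partial F}^2$.
\item This last quantity is bounded by $h_F^{-2}\|v_h\|_{0,F}^2$ via an inverse trace inequality valid on the virtual space.
\end{itemize}

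\textbf{Element case and main obstacle.} The element case repeats the same structure one dimension up. The face estimates control the boundary DoFs, and the internal moments together with the enhancement control $\Delta v_h$. The main obstacle is precisely the inverse estimates for virtual functions (e.g.\ $\|v_h\|_{0,\partial D}\lesssim h_D^{-1/2}\|v_h\|_{0,D}$ and the stability of the harmonic lifting) with constants uniform over all shapes satisfying \textbf{(A1)}. Such functions are not polynomials, so a reference-element compactness argument is unavailable. I would rely on the weighted-norm and polynomial-inverse arguments of \cite{VEM-Estimates-2D, VEM-Estimates-3D}, decomposing each function into a polynomial-Laplacian part and a harmonic part, to handle this step.
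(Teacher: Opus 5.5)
The paper gives no argument for this proposition. It imports it, together with the inverse estimate of Proposition \ref{prp:vem-inverse}, from \cite{VEM-Estimates-2D, VEM-Estimates-3D}. Your edge$\to$face$\to$element reconstruction is a sound outline, and its main value is that it separates the elementary part from the non-elementary one.

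\textbf{Upper bound.} This direction needs no virtual-element inverse estimate at all. Write $v_h=v_g+v_p$ with $v_g$ harmonic and $v_p\in H^1_0(F)$, $\Delta_F v_p=p\in\mathbb P_k(F)$. Then $|v_p|^2_{1,F}=-(p,\PiZeroF v_h)_{0,F}+(p,v_g)_{0,F}$. A bubble argument gives $\|p\|_{0,F}\lesssim h_F^{-1}|v_p|_{1,F}$, hence $\|v_p\|_{0,F}\lesssim\|\PiZeroF v_h\|_{0,F}+\|v_g\|_{0,F}$. This is the precise mechanism behind your vague step ``bound $p$ by the moments''. The lifting bound $\|v_g\|_{0,F}\lesssim h_F^{1/2}\|g\|_{0,\partial F}$ follows by duality and the Rellich identity on star-shaped domains.

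\textbf{Lower bound.} This direction rests entirely on Corollary \ref{crl:trace} and its face analogue, i.e.\ on the same imported estimate. There your proof is exactly as self-contained as the paper's citation.

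Two points need tightening.

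\textbf{Circularity.} Make sure the imported inverse estimate is not itself derived from the norm equivalence. The shortcut $|v_h|^2_{1,E}\lesssim h_E\sum_\nu\mathrm{dof}^E_\nu(v_h)^2\lesssim h_E^{-2}\|v_h\|^2_{0,E}$ uses precisely the lower bound you want. An independent proof exists. Start from $|v_h|^2_{1,E}=-(\Delta v_h,v_h)_{0,E}+\langle\partial_{\nn}v_h,v_h\rangle_{\partial E}$. Then combine:
\begin{itemize}
\item the bubble bound $\|\Delta v_h\|_{0,E}\lesssim h_E^{-1}|v_h|_{1,E}$;
\item an $H^{1/2}(\partial E)$ inverse estimate for the trace (in 3D this comes from the face inverse estimate, which is exactly your recursion);
\item the multiplicative trace inequality $\|v_h\|^2_{0,\partial E}\lesssim h_E^{-1}\|v_h\|^2_{0,E}+\|v_h\|_{0,E}|v_h|_{1,E}$.
\end{itemize}
Young's inequality then absorbs $|v_h|_{1,E}$.

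\textbf{Edge lengths.} Your bound of the boundary DoFs of $F$ by $h_F^{-1}\|v_h\|^2_{0,\partial F}$, and the face inverse trace inequality, both need $h_e\gtrsim h_F$ for every edge $e\subset\partial F$. Assumption \textbf{(A1)} as written does not contain this; it only compares $h_F$ with $h_E$. This is not a technicality. Take $k=1$ and a vertex flanked by two edges of length $\epsilon$. Set that vertex value to $1$ and all other DoFs to zero. Then $\|v_h\|_{0,F}\to 0$ as $\epsilon\to0$, while $h_F^2\sum_\nu\mathrm{dof}^F_\nu(v_h)^2=h_F^2$, so the face lower bound fails. State this edge assumption explicitly.
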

\noindent Furthermore, we will make use of the following inverse inequality for virtual functions \cite{VEM-Estimates-3D}.
\begin{proposition}\label{prp:vem-inverse}
Under assumption \textbf{(A1)}, for any $E \in \Omega_h$, it holds that
\[
| v_h |_{1,E} \lesssim h_E^{-1} \| v_h \|_{0,E} \qquad \forall v_h \in V_h^k(E),
\]
where the implied constant is independent of the mesh size $h_E$.
\end{proposition}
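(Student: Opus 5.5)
We prove the inverse inequality $|v_h|_{1,E} \lesssim h_E^{-1}\|v_h\|_{0,E}$ by a scaling and finite-dimensional argument, resting on the norm equivalence of Proposition~\ref{prp:norm equivalence}. The idea is to bound $|v_h|_{1,E}^2$ from above by $h_E \sum_{\nu\in\mathcal N_E}\mathrm{dof}^E_\nu(v_h)^2$. Combined with the lower bound $C_1 h_E^3\sum_\nu \mathrm{dof}^E_\nu(v_h)^2 \le \|v_h\|_{0,E}^2$, this gives $|v_h|_{1,E}^2 \lesssim h_E^{-2}\|v_h\|_{0,E}^2$.

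\textbf{Step 1 (energy bound).} We split $v_h = \PiNablaE v_h + (v_h - \PiNablaE v_h)$.
\begin{itemize}
\item For the polynomial part we use $|\PiNablaE v_h|_{1,E} \le |v_h|_{1,E}$. More usefully, $\PiNablaE v_h$ is computable from the DoFs through integrations by parts, which involve face traces, internal moments and $\Delta$ of polynomials. Polynomial inverse estimates on the star-shaped element then give $|\PiNablaE v_h|_{1,E}^2 \lesssim h_E \sum_\nu \mathrm{dof}^E_\nu(v_h)^2$.
\item For the remainder we write
\[
|v_h|_{1,E}^2 = \int_E (-\Delta v_h)\, v_h \,{\rm d}E + \int_{\partial E} \partial_{\nn} v_h\, v_h \,{\rm d}s .
\]
The volume term is controlled using that $\Delta v_h\in\mathbb P_k(E)$: the moments up to degree $k-2$ are DoFs, and the moments of degree $k-1,k$ are fixed by the enhancement condition in terms of $\PiNablaE v_h$.
\end{itemize}

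\textbf{Step 2 (boundary term — the main obstacle).} The boundary term $\int_{\partial E}\partial_{\nn} v_h\, v_h$ involves the normal derivative of a virtual function, for which no $L^2$ control on $\partial E$ is available. We avoid it by working with the $H^{1/2}(\partial E)$ trace instead. Let $w$ be the harmonic-plus-polynomial lifting; the energy-minimizing property of the Poisson problem gives
\[
|v_h|_{1,E} \lesssim |v_h|_{1/2,\partial E} + h_E\,\|\Delta v_h\|_{0,E}.
\]
The boundary piece is then estimated face by face: on each face $F$ we bound $|v_h|_{1,F} \lesssim h_F^{-1}\|v_h\|_{0,F}$ by applying the same argument in two dimensions. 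There the boundary data are piecewise polynomials on edges, so a polynomial inverse estimate on edges is enough. Interpolation between $L^2$ and $H^1$ on $\partial E$, together with the face part of Proposition~\ref{prp:norm equivalence}, gives $|v_h|_{1/2,\partial E}^2 \lesssim h_E \sum_{\nu\in\mathcal N_E}\mathrm{dof}^E_\nu(v_h)^2$. The interior piece satisfies $h_E^2\|\Delta v_h\|_{0,E}^2 \lesssim h_E\sum_\nu \mathrm{dof}_\nu^2$ by the polynomial inverse estimate $\|p\|_{0,E}\lesssim h_E^{-2}\|p\|_{-2\text{-type}}$, with $\Delta v_h$ tested against scaled monomials.

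Assumption \textbf{(A1)} is used for three things:
\begin{itemize}
\item uniformity of the lifting constant, through star-shapedness with respect to a ball of radius $\rho h_E$;
\item the trace and interpolation constants;
\item the comparability $h_F\simeq h_E$.
\end{itemize}
This keeps every implied constant independent of $h_E$.

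We expect the hardest point to be exactly this uniform $H^{1/2}$-lifting on general star-shaped polyhedra. Our first attempt would be to reuse the techniques of \cite{VEM-Estimates-3D}.
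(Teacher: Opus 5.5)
The paper does not prove this statement; it imports it from \cite{VEM-Estimates-3D}. Once the interior bound is repaired (see below), your Step~2 is a valid argument, but it reduces the result to Proposition~\ref{prp:norm equivalence} rather than following the standard proof. You show $|v_h|_{1,E}^2\lesssim h_E\sum_\nu \mathrm{dof}^E_\nu(v_h)^2$ by combining three ingredients: the lifting bound $|v_h|_{1,E}\lesssim |v_h|_{1/2,\partial E}+h_E\|\Delta v_h\|_{0,E}$, face inverse estimates (themselves taken from the face norm equivalence), and $H^{1/2}$ interpolation. You then close with the lower DoF bound. The usual argument never touches DoFs. After the same lifting step and $|v_h|_{1/2,\partial E}^2\lesssim h_E^{-1}\|v_h\|_{0,\partial E}^2$, it applies the multiplicative trace inequality $\|v_h\|_{0,\partial E}^2\lesssim h_E^{-1}\|v_h\|_{0,E}^2+\|v_h\|_{0,E}\,|v_h|_{1,E}$ and absorbs the last term with Young's inequality. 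The same trick on each face yields the 2D face estimate. That route is self-contained. It is also typically how the lower DoF bound you rely on is obtained in the first place: boundary DoFs are controlled through $\|v_h\|_{0,\partial E}\lesssim h_E^{-1/2}\|v_h\|_{0,E}$, which is Corollary~\ref{crl:trace}. Your proof is therefore legitimate here only because the paper takes Proposition~\ref{prp:norm equivalence} as given; as a stand-alone derivation it would be circular. The uniform lifting you single out as the hardest point is standard for domains star-shaped with respect to a ball of radius $\rho h_E$, and both routes need it. Your Step~1 plays no role and should be dropped. On its own it does not close: the DoFs and the enhancement fix $\PiZeroE v_h$, but they bound neither $\Delta v_h$ nor $\partial_{\nn}v_h$.

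Two points must be fixed. First, $\Delta v_h$ cannot be controlled by testing it against scaled monomials. Integrating $(\Delta v_h,m)_{0,E}$ by parts brings back exactly the term $\int_{\partial E}\partial_{\nn}v_h\,m$ you set out to avoid. Test instead with $\phi=\psi\,\Delta v_h\in H^2_0(E)$, where $\psi\ge 0$ is a smooth cutoff supported in the ball of radius $\rho h_E$ from \textbf{(A1)} and equal to one on the concentric ball of half the radius. Polynomial norm equivalence and the identity $(\Delta v_h,\phi)_{0,E}=(v_h,\Delta\phi)_{0,E}$ then give $h_E\|\Delta v_h\|_{0,E}\lesssim h_E^{-1}\|v_h\|_{0,E}$ directly.

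Second, the inverse estimate on $\partial F$ for continuous piecewise polynomials yields the factor $h_F^{-1}$ only if every edge satisfies $h_e\gtrsim h_F$. Star-shapedness does not imply this, and \textbf{(A1)} as written does not state it. The condition cannot be dropped: for $k\ge 2$, take a face function whose only nonzero DoF is an interior node value on a short edge. Then $h_F|v_h|_{1,F}/\|v_h\|_{0,F}$ blows up as that edge shrinks. Add this standard no-short-edge condition to your list of uses of the mesh assumptions; the cited results implicitly need it as well.
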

\noindent As a direct consequence of this inverse estimate, we obtain the following trace inequality.

\begin{corollary}\label{crl:trace}
Under assumption \textbf{(A1)}, for any $E \in \Omega_h$, it holds that
\[
\| v_h \|_{0,\partial E} \lesssim h_E^{-\frac{1}{2}} \| v_h \|_{0,E} \qquad \forall v_h \in V_h^k(E),
\]
where the implied constant is independent of the mesh size $h_E$.
\end{corollary}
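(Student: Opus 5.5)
The plan is to combine the standard continuous trace inequality on star-shaped polyhedra with the inverse estimate of Proposition~\ref{prp:vem-inverse}. Under the first item of assumption \textbf{(A1)}, each $E$ is star-shaped with respect to a ball of radius at least $\rho h_E$. For such domains the scaled trace inequality holds for every $w \in H^1(E)$:
\[
\| w \|^2_{0,\partial E} \lesssim h_E^{-1} \| w \|^2_{0,E} + h_E \, | w |^2_{1,E} \, ,
\]
with a constant depending only on $\rho$.

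To justify this inequality, I would map $E$ to the reference configuration $\hat E = (E - \xx_E)/h_E$, which has unit diameter and is star-shaped with respect to a ball of radius at least $\rho$. On $\hat E$ the trace inequality $\|\hat w\|_{0,\partial \hat E} \lesssim \|\hat w\|_{1,\hat E}$ holds uniformly over this class of domains. One way to see the uniformity is the divergence-theorem argument with the field $\xx - \xx_B$, where $\xx_B$ is the center of the ball. This field satisfies $(\xx - \xx_B)\cdot \nn \geq \rho$ a.e.\ on $\partial \hat E$ and has bounded divergence and length. Scaling back to $E$ gives the powers of $h_E$ shown above. This is standard (see, e.g., \cite{brenner-scott:book}), so I would simply quote it.

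Applying this inequality to $w = v_h \in V_h^k(E) \subset H^1(E)$ and then bounding $|v_h|_{1,E} \lesssim h_E^{-1}\|v_h\|_{0,E}$ by Proposition~\ref{prp:vem-inverse} gives
\[
\| v_h \|^2_{0,\partial E} \lesssim h_E^{-1}\|v_h\|^2_{0,E} + h_E \, h_E^{-2} \|v_h\|^2_{0,E} \lesssim h_E^{-1}\|v_h\|^2_{0,E} \, .
\]
Taking square roots concludes the proof.

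The only delicate point is that the trace constant must be independent of the shape of $E$ beyond $\rho$, including for nonconvex polyhedra with many faces. The star-shapedness argument above handles this, and the second mesh assumption on face sizes is not needed. The whole difficulty of the estimate is carried by the VEM inverse inequality, which is already available.
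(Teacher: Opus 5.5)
Your proposal is correct and follows the same route as the paper: a scaled trace inequality on the star-shaped element, $\|w\|^2_{0,\partial E} \lesssim h_E^{-1}\|w\|^2_{0,E} + h_E |w|^2_{1,E}$, combined with the inverse estimate of Proposition~\ref{prp:vem-inverse}. The paper simply cites the standard trace inequality. Your divergence-theorem justification with the field $\xx - \xx_B$ is a valid and self-contained way to obtain it with a constant depending only on $\rho$.
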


\begin{proof}
The result follows by combining a standard trace inequality with the inverse estimate provided in Proposition \ref{prp:vem-inverse}.
\end{proof}

Following \cite{VEM-CIP-Conf, PhD-Thesis, burman:2004}, we introduce the 3D Oswald interpolant, also known as averaging operator, for virtual element functions. 
This operator constructs a continuous approximation $\pi(v_h)$ from a piecewise defined, potentially discontinuous function $v_h$ by averaging its DoFs across neighboring elements. Specifically, for any global degree of freedom $\nu$ associated with the space $V_h^k(\Omega_h)$, the corresponding DoF of the interpolant is defined as:
\[
\mathrm{dof}_\nu(\pi(v_h))
\coloneqq
\frac{1}{|\omega_\nu|}
\sum_{E \in \omega_\nu}
\mathrm{dof}^E_\nu(v_h) \, ,
\]
where $\omega_\nu = \{ E \in \Omega_h : \nu \text{ is a local DoF of } E \}$ is the patch of elements sharing $\nu$, and $| \omega_\nu |$ denotes its cardinality.
\begin{proposition}\label{prop:local_oswald_estimate}
Let $\nu$ be a global DoF associated with the space $V_h^k(\Omega_h)$. 
We define the locally averaged mesh size at $\nu$ as:
\[
h_\nu \coloneqq \frac{1}{|\omega_\nu|} \sum_{E \in \omega_\nu} h_E \, .
\] 
Let $p_h$ be a piecewise polynomial function defined on the mesh $\Omega_h$ of degree at most $k$. We introduce a discontinuous virtual element function $\tilde{w}_h$ by prescribing its local DoFs on each element $E \in \Omega_h$ as follows:
\[
\mathrm{dof}^E_\nu(\tilde{w}_h) \coloneqq h_\nu \, \mathrm{dof}^E_{\nu}(p_h) \, .
\]
Finally, we define the continuous approximation $w_h \coloneqq \pi(\tilde{w}_h) \in V_h^k(\Omega_h)$. 
Under assumption \textbf{(A1)}, it holds that for any element $E \in \Omega_h$:
\[
\| \tilde{w}_h - w_h \|_{0,E}^2 \le C \, h_E \sum_{F \in \mathcal{F}(\omega_E)} h_F^2 \, \| \jump{p_h} \|_{0,F}^2 \, ,
\]
where $C>0$ is a constant independent of the mesh size. Here, $\omega_E$ denotes the set of elements in $\Omega_h$ sharing at least one vertex with $E$, and $\mathcal{F}(\omega_E)$ is the set of all internal faces $F$ such that $F = \partial E_1 \cap \partial E_2$ with $E_1, E_2 \in \omega_E$.
\end{proposition}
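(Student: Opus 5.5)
We bound $\|\tilde w_h - w_h\|_{0,E}$ through its degrees of freedom, since $\tilde w_h - w_h$ restricted to $E$ lies in $V_h^k(E)$. By Proposition \ref{prp:norm equivalence},
\[
\|\tilde w_h - w_h\|_{0,E}^2 \lesssim h_E^3 \sum_{\nu \in \mathcal N_E} \bigl(\mathrm{dof}^E_\nu(\tilde w_h) - \mathrm{dof}_\nu(w_h)\bigr)^2 .
\]
Internal element moments are shared by no other element, so $|\omega_\nu|=1$ and those DoFs contribute nothing. For every other $\nu \in \mathcal N_E$ (vertex, edge or face DoF), the definition of the Oswald average gives
\[
\mathrm{dof}^E_\nu(\tilde w_h) - \mathrm{dof}_\nu(w_h) = \frac{h_\nu}{|\omega_\nu|}\sum_{K\in\omega_\nu}\bigl(\mathrm{dof}^E_\nu(p_h)-\mathrm{dof}^K_\nu(p_h)\bigr).
\]
Assumption \textbf{(A1)} gives a uniform bound on $|\omega_\nu|$ and $h_\nu \simeq h_E$. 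Hence the squared difference is bounded by $h_E^2 \sum_{K\in\omega_\nu}(\mathrm{dof}^E_\nu(p_h)-\mathrm{dof}^K_\nu(p_h))^2$.

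Next we connect $E$ to each $K\in\omega_\nu$ by a chain of elements $E=K_0,K_1,\dots,K_m=K$ in $\omega_\nu$. Consecutive elements share a face $F_j$ that contains the geometric entity carrying $\nu$. This is possible because the elements around a vertex, edge or face form a face-connected patch for a conforming polyhedral mesh, and the chain length $m$ is uniformly bounded by \textbf{(A1)}. Telescoping and Cauchy–Schwarz reduce the problem to bounding single-face differences $(\mathrm{dof}^{K_j}_\nu(p_h)-\mathrm{dof}^{K_{j+1}}_\nu(p_h))^2$. Such a difference is a DoF of the jump $\jump{p_h}|_{F_j}$, viewed as a face function: a vertex or edge value, or a face moment lying on $\partial F_j$ or in $F_j$. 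Since $\jump{p_h}|_{F_j}$ is a polynomial on $F_j$, it belongs to $V_h^k(F_j)$. The face part of Proposition \ref{prp:norm equivalence} then yields
\[
(\mathrm{dof}^{K_j}_\nu(p_h)-\mathrm{dof}^{K_{j+1}}_\nu(p_h))^2 \le \sum_{\mu\in\mathcal N_{F_j}}\mathrm{dof}^{F_j}_\mu(\jump{p_h})^2 \lesssim h_{F_j}^{-2}\|\jump{p_h}\|_{0,F_j}^2 .
\]
For face moments we must check that the element DoF equals the face DoF. The normalization $\frac1{|F|}\int_F$ is the same, so this holds.

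Collecting terms, each DoF of $E$ contributes at most $h_E^3\cdot h_E^2\cdot h_F^{-2}\|\jump{p_h}\|_{0,F}^2$ for faces $F\in\mathcal F(\omega_E)$. Each face in the chain joins two elements that share the vertex or edge of $\nu$ with $E$, so it lies in $\mathcal F(\omega_E)$. Using $h_F\simeq h_E$ from \textbf{(A1)} together with local quasi-uniformity, we rewrite $h_E^5 h_F^{-2}\simeq h_E h_F^2$. The number of DoFs per element and the number of repetitions of each face are uniformly bounded, which gives the claimed estimate with a constant independent of $h$.

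The main obstacle is the chain argument for edge and vertex DoFs in 3D. We must guarantee a face-connected path inside $\omega_\nu$ of uniformly bounded length, with every intermediate face containing the entity of $\nu$, so that the DoF difference really is a DoF of the face jump. If the patch around a vertex is not face-connected through faces containing that vertex, we would instead route through faces of $\mathcal F(\omega_E)$. In that case we would control the point value at $\nu$ through the face norm equivalence applied to faces touching the vertex. This would still be covered by $\mathcal F(\omega_E)$, at the cost of a slightly more careful counting of the patch.
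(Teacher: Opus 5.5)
Your proposal is correct and follows the same route as the paper. Both arguments use the DoF norm equivalence on $E$, the vanishing of the interior-moment contributions, and the reduction of each Oswald-average discrepancy to DoFs of the face jumps $\jump{p_h}|_F \in \mathbb{P}_k(F) \subset V_h^k(F)$, followed by the face norm equivalence and local quasi-uniformity to turn $h_E^5 h_F^{-2}$ into $h_E h_F^2$. Your explicit chain/telescoping argument spells out the step the paper delegates to the ``standard 2D arguments''. The face-connectedness that argument needs does hold for a conforming mesh of a Lipschitz polyhedral domain, so your fallback is not required.
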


\begin{proof}
Let us focus on a specific element $E \in \Omega_h$. Since the difference $(\tilde{w}_h - w_h)$ belongs to the local VEM space $V^k_h(E)$, we can apply Proposition \ref{prp:norm equivalence} and obtain
\begin{equation}\label{eq:norm_equiv_3d}
\| \tilde{w}_h - w_h \|_{0,E}^2 \lesssim h_E^3 \sum_{\nu \in \mathcal{N}_E} \left| \mathrm{dof}^E_\nu(\tilde{w}_h) - \mathrm{dof}^E_\nu(w_h) \right|^2 \, .
\end{equation}
Exploiting the definitions of $w_h$ and $\tilde{w}_h$, we observe that DoF $\nu $ of $w_h$ evaluates to:
\[
\mathrm{dof}_\nu(w_h) 
=
\mathrm{dof}^E_\nu(w_h) 
= 
\frac{1}{|\omega_\nu|} \sum_{K \in \omega_\nu} \mathrm{dof}^K_\nu(\tilde{w}_h)
= 
h_\nu \left( \frac{1}{|\omega_\nu|} \sum_{K \in \omega_\nu} \mathrm{dof}^K_\nu(p_h) \right) 
\eqqcolon 
h_\nu \, \mathrm{dof}_\nu(q_h) \, ,
\]
where $q_h \coloneqq \pi(p_h)$ is the standard Oswald interpolant of the piecewise polynomial $p_h$ \cite{VEM-CIP-Conf}. 
Consequently, we can rewrite the difference inside the sum in \eqref{eq:norm_equiv_3d} as:
\begin{equation}\label{eq:dof_diff_3d}
\left| \mathrm{dof}^E_\nu(\tilde{w}_h) - \mathrm{dof}^E_\nu(w_h) \right| = h_\nu \left| \mathrm{dof}^E_\nu(p_h) - \mathrm{dof}^E_\nu(q_h) \right| \, .
\end{equation}
Moreover, if $\nu$ is a volume moment DoF associated with $E$ (which by definition is internal and not shared with other elements), we have that $\omega_\nu = \{E\}$ and the difference \eqref{eq:dof_diff_3d} is equal to zero.
Hence the difference \eqref{eq:norm_equiv_3d} only sees the DoFs attached to a face $F \subset \partial E$. It holds that
\begin{equation} \label{eq:norm_equiv_3d-2} 
h_E^3 \sum_{\nu \in \mathcal{N}_E} \left| \mathrm{dof}^E_\nu(\tilde{w}_h) - \mathrm{dof}^E_\nu(w_h) \right|^2
\leq
h_E^3
\sum_{F \subset \partial E} \sum_{\nu \in \mathcal{N}_F} h^2_\nu \left| \mathrm{dof}^F_\nu(p_h|_E) - \mathrm{dof}^F_\nu(q_h) \right|^2 \, .
\end{equation}
The second inequality holds because, under the mesh regularity assumption \textbf{(A1)}, each edge and vertex degree of freedom is shared by a finite and uniformly bounded number of faces.
Hence, following the standard arguments used for the 2D case \cite{VEM-CIP-Conf}, we deduce that:
\[
\left| \mathrm{dof}^F_\nu(p_h|_E) - \mathrm{dof}_\nu(q_h) \right|^2 \lesssim \sum_{F \in \mathcal{F}(\omega_\nu)} \left| \mathrm{dof}^F_\nu(\jump{p_h}_F) \right|^2 \, .
\]
Substituting this estimate into \eqref{eq:norm_equiv_3d-2}, swapping the sums, we can bound the sum in \eqref{eq:norm_equiv_3d} as follows:
\[
\| \tilde{w}_h - w_h \|_{0,E}^2 \lesssim h_E^3   \sum_{F \in \mathcal{F}(\omega_E)} \sum_{\nu \in \mathcal N_F} h_\nu^2 \left| \mathrm{dof}^F_\nu(\jump{p_h}_F) \right|^2 \, .
\]
Exploiting the mesh local quasi-uniformity, and Proposition \ref{prp:norm equivalence}, we obtain:
\[
\| \tilde{w}_h - w_h \|_{0,E}^2 
\lesssim 
h_E^3 \sum_{F \in \mathcal{F}(\omega_E)} \| \jump{p_h} \|_{0,F}^2 
\lesssim
h_E \sum_{F \in \mathcal{F}(\omega_E)} h_F^2 \| \jump{p_h} \|_{0,F}^2 \, .
\]
\end{proof}

\subsection{Inf-sup condition}

Given a polyhedron $E \in \Omega_h$, we define the local CIP norm as
\begin{equation}\label{eq:normaCIP-E}
\begin{aligned}
| v_h |^2_{\cip, E}
&\coloneqq
\| \beps^{\frac12} \nabla v_h \|^2_{0,E} 
+
h_E \, \| \bb \cdot \nabla \PiZeroE v_h \|^2_{0,E} +
\| \sigma^{\frac{1}{2}} v_h \|^2_{0,E} \\
&\quad 
+ 
\sum_{F \subset \Gamma_E} 
\left(
\| \xi_{\beps, F} \PiZeroF v_h \|^2_{0,F} 
+
\| \xi_{\bb, F} \PiZeroF v_h \|^2_{0,F}
\right) 
+ 
J_h^E(v_h,v_h) \, ,
\end{aligned}
\end{equation}
and the global norm as
\begin{equation}\label{eq:normaCIP}
\| v_h \|_{\cip} \coloneqq 
\left( 
\sum_{E \in \Omega_h} \| v_h \|^2_{\cip,E}
\right)^\frac12 \, .
\end{equation}
The quantities appearing in the boundary norms are defined as follows:
\begin{equation}
\label{eq:xi-def}
\xi_{\beps, F} \coloneqq \left( \dfrac{\| \beps \|_{[L^\infty(F)]^{3 \times 3}}}{\delta \, h_E} \right)^{\frac{1}{2}} \, , 
\qquad
\xi_{\bb, F} \coloneqq \left( \dfrac{1}{2} | \bb \cdot \nn^E | \right)^{\frac{1}{2}} .
\end{equation}
The proof of the inf-sup condition is structured into two main steps. First, we establish control over the symmetric terms of the bilinear form by testing it with identical entries.
Second, we demonstrate how to control the convective term appearing in the norm by employing a specifically constructed test function.
\begin{proposition}\label{prp:symm} 
Under assumption \textbf{(A1)}, let $v_h \in V_h^k(\Omega_h)$, then the following estimate holds
\[
\begin{aligned}
A_\cip (v_h,v_h)
&\gtrsim \sum_{E \in \Omega_h} \Big( \,
\| \beps^{\frac12} \nabla v_h \|^2_{0,E}
+
\| \sigma^{\frac{1}{2}} v_h \|^2_{0,E} + J_h^E(v_h,v_h) \\
&\qquad \qquad + \sum_{F \subset \Gamma_E}
\bigl
( \, \| \xi_{\beps, F} \PiZeroF v_h \|^2_{0,F}
+
\| \xi_{\bb, F} \PiZeroF v_h \|^2_{0,F}
\, \bigr) \Big) \, .
\end{aligned}
\]
\end{proposition}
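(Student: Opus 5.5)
We will expand $A_\cip(v_h,v_h)$ term by term and treat each contribution separately. Since $\bskewEh(v_h,v_h)=0$ by construction, the convective part drops out. The remaining terms are $a_h(v_h,v_h)+c_h(v_h,v_h)+J_h(v_h,v_h)+\mathcal N_h(v_h,v_h)$. By \eqref{eq:stab-ah} and \eqref{eq:stab-ch}, together with \eqref{eq:ellitticità} and $\sigma\ge\sigma_0$, these give $a_h^E(v_h,v_h)\gtrsim \|\beps^{\frac12}\nabla v_h\|^2_{0,E}$ and $c_h^E(v_h,v_h)\gtrsim\|\sigma^{\frac12}v_h\|^2_{0,E}$. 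The constants hide the ratios $\beps_1/\beps_0$ and $\|\sigma\|_\infty/\sigma_0$, which we accept as parameter-dependent constants. The $J_h$ term appears verbatim on the right-hand side.

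Testing $\mathcal N_h^E$ with identical entries gives three pieces. The two positive pieces are exactly $\|\xi_{\beps,F}\PiZeroF v_h\|^2_{0,F}+\|\xi_{\bb,F}\PiZeroF v_h\|^2_{0,F}$. The remaining piece is the sign-indefinite consistency term $-2\int_F(\beps\PiVecMinusOneE\nabla v_h\cdot\nn^E)\PiZeroF v_h$, summed over $F\subset\Gamma_E$. The main obstacle is absorbing this term.

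To handle it, we use Cauchy--Schwarz and Young with a parameter $\eta>0$:
\[
2\Big|\int_F \beps\PiVecMinusOneE\nabla v_h\cdot\nn^E\,\PiZeroF v_h\Big|
\le \eta\,\frac{\delta h_E}{\|\beps\|_{L^\infty(F)}}\|\beps\PiVecMinusOneE\nabla v_h\cdot\nn^E\|^2_{0,F}
+\frac1\eta\|\xi_{\beps,F}\PiZeroF v_h\|^2_{0,F}.
\]
In the first term we bound $|\beps\boldsymbol\tau\cdot\nn|^2\le\|\beps\|_{L^\infty(F)}\,\boldsymbol\tau^T\beps\boldsymbol\tau$ pointwise, using that $\beps$ is symmetric positive definite. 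It then remains to estimate $h_E\|\beps^{\frac12}\PiVecMinusOneE\nabla v_h\|^2_{0,\partial E}$. Since $\PiVecMinusOneE\nabla v_h$ is polynomial, a polynomial trace/inverse inequality on the star-shaped element (valid under \textbf{(A1)}) gives $h_E\|\PiVecMinusOneE\nabla v_h\|^2_{0,\partial E}\lesssim\|\PiVecMinusOneE\nabla v_h\|^2_{0,E}$.

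The $\beps$ weight must be passed through this step. On $E$ we have $\beps\in W^{1,\infty}(E)$, so we could freeze $\beps$ at a point and control the perturbation. The simpler route is to use the uniform bounds \eqref{eq:ellitticità}, writing $\|\beps^{\frac12}\boldsymbol\tau\|^2\le\beps_1\|\boldsymbol\tau\|^2\le(\beps_1/\beps_0)\|\beps^{\frac12}\boldsymbol\tau\|^2$; I would take this route. Combined with the $L^2$-stability of the projection, this gives
\[
\sum_{F\subset\Gamma_E} h_E\|\beps^{\frac12}\PiVecMinusOneE\nabla v_h\|^2_{0,F}\le C_{\rm tr}\,a_h^E(v_h,v_h),
\]
with $C_{\rm tr}$ independent of $h$.

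To close the argument, we choose $\eta$ so that $\eta\,\delta\,C_{\rm tr}\le\frac12$, and we require $\delta$ small enough that $1/\eta<1$, for instance $\eta=2$ and $\delta\le 1/(4C_{\rm tr})$. This is the usual Nitsche condition on $\delta$. Under this choice, half of $a_h$ and a fixed fraction of the $\xi_{\beps}$ term survive. Summing over $E$ then yields the claimed lower bound.
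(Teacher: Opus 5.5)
Your proposal is correct and follows essentially the same route as the paper. The skew term vanishes, $a_h^E$ and $c_h^E$ are bounded below through \eqref{eq:stab-ah}--\eqref{eq:stab-ch}, and the indefinite Nitsche term is absorbed by Cauchy--Schwarz/Young with a weight matched to the penalty, a polynomial trace inequality for $\PiVecMinusOneE\nabla v_h$, the ellipticity bounds \eqref{eq:ellitticità}, and a smallness condition on $\delta$; your choice $\eta=2$ is exactly the paper's $\eta_F = 2\delta/\|\beps\|_{[L^\infty(F)]^{3\times 3}}$. The differences are cosmetic. Your pointwise bound $|\beps\boldsymbol\tau\cdot\nn^E|^2\le\|\beps\|_{[L^\infty(F)]^{3\times 3}}\,\boldsymbol\tau^T\beps\boldsymbol\tau$ cancels the $\|\beps\|_{[L^\infty(F)]^{3\times3}}$ factor, and applying the trace inequality on all of $\partial E$ while absorbing directly into $a_h^E$ keeps $N_F^{\max}$ and $\alpha_*$ out of the condition on $\delta$. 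The only slip is wording: what you need is $\eta>1$ together with $\eta\,\delta\, C_{\rm tr}\le\tfrac12$, which your concrete choice $\eta=2$, $\delta\le 1/(4C_{\rm tr})$ satisfies.
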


\begin{proof}
Since by construction the bilinear form $b_{\rm skew}(\cdot, \cdot)$ is skew-symmetric, exploiting the coercivity of the discrete forms \eqref{eq:stab-ah} and \eqref{eq:stab-ch}, there exists a constant $\alpha_* > 0$, depending only on the stabilization term $S_h^E(\cdot,\cdot)$, such that:
\begin{equation}\label{eq:Acipvhvh}
\begin{aligned}
A^E_\cip (v_h, v_h)
&\geq
\alpha_* \left( \| \beps^{\frac12} \nabla v_h \|^2_{0,E} + \| \sigma^{\frac{1}{2}} v_h \|^2_{0,E} \right) +
J_h^E(v_h,v_h) \\
&\quad + \sum_{F \subset \Gamma_E} \left( \| \xi_{\beps, F} \PiZeroF v_h \|^2_{0,F} + \| \xi_{\bb, F} \PiZeroF v_h \|^2_{0,F} \right) \\
&\quad - 2 \sum_{F \subset \Gamma_E} \int_F (\beps \PiVecMinusOneE \nabla v_h \cdot \nn^E) \, \PiZeroF v_h \, \mathrm{d}F \, .
\end{aligned}
\end{equation}
To bound the last term, we apply the Cauchy-Schwarz inequality followed by Young's inequality. For any face $F \subset \Gamma_E$ and an arbitrary face-dependent parameter $\eta_F > 0$, we have:
\begin{equation}\label{eq:young}
2 \int_F (\beps \PiVecMinusOneE \nabla v_h \cdot \nn^E) \, \PiZeroF v_h \, {\rm d}F
\leq
\eta_F \, h_E \, \| \beps \PiVecMinusOneE \nabla v_h \cdot \nn^E \|^2_{0,F} + \frac{1}{\eta_F \, h_E} \, \| \PiZeroF v_h \|^2_{0,F} \, .
\end{equation}
Let us bound the first term on the right-hand side. Extracting the local $L^\infty$-norm of the diffusion tensor over the face $F$, we obtain:
\[
h_E \, \| \beps \PiVecMinusOneE \nabla v_h \cdot \nn^E \|^2_{0,F}
\leq
\|\beps\|^2_{[L^\infty(F)]^{3 \times 3}} \, h_E \, \| \PiVecMinusOneE \nabla v_h \|^2_{0,F} \, .
\]
Applying a polynomial trace inequality, it holds $h_E \| \PiVecMinusOneE \nabla v_h \|^2_{0,F} \leq C_{\rm inv} \| \PiVecMinusOneE \nabla v_h \|^2_{0,E}$, where $C_{\rm inv} > 0$ depends only on the mesh shape parameter $\rho$. 
Using the global ellipticity bound \eqref{eq:ellitticità}, and the $L^2$-stability of the projection operator $\PiVecMinusOneE$, we get:
\begin{equation}\label{eq:beps-estimate-boundary}
\begin{aligned}
h_E \| \beps \PiVecMinusOneE \nabla v_h \cdot \nn^E \|^2_{0,F}
&\leq
C_{\rm inv} \|\beps\|^2_{[L^\infty(F)]^{3 \times 3}} \| \PiVecMinusOneE \nabla v_h \|^2_{0,E} \\
&\leq
C_{\rm inv} \|\beps\|^2_{[L^\infty(F)]^{3 \times 3}} \| \nabla v_h \|^2_{0,E} \\
&\leq
C_{\rm inv} \frac{\|\beps\|^2_{[L^\infty(F)]^{3 \times 3}}}{\beps_0} \| \beps^{\frac12} \nabla v_h \|^2_{0,E} \, .
\end{aligned}
\end{equation}
We now choose the parameter $\eta_F = \frac{2\delta}{\|\beps\|_{[L^\infty(F)]^{3 \times 3}}}$ to explicitly match the penalty term in the formulation. Substituting this into \eqref{eq:young}, one power of the local norm cancels out:
\[
2 \int_F (\beps \PiVecMinusOneE \nabla v_h \cdot \nn^E) \, \PiZeroF v_h \, \mathrm{d}F
\leq
2 \delta C_{\rm inv} \frac{\|\beps\|_{[L^\infty(F)]^{3 \times 3}}}{\beps_0} \| \beps^{\frac12} \nabla v_h \|^2_{0,E} 
+ \frac{1}{2} \frac{\|\beps\|_{[L^\infty(F)]^{3 \times 3}}}{\delta h_E} \| \PiZeroF v_h \|^2_{0,F} \, .
\]
Summing this bound over all faces $F \subset \Gamma_E$ and inserting it back into the expansion of $A^E_\cip (v_h, v_h)$ \eqref{eq:Acipvhvh}, we obtain:
\[
\begin{aligned}
A^E_\cip (v_h, v_h)
&\geq
\left(\alpha_* - 2 \delta C_{\rm inv} \sum_{F \subset \Gamma_E} \frac{\|\beps\|_{[L^\infty(F)]^{3 \times 3}}}{\beps_0} \right) \| \beps^{\frac12} \nabla v_h \|^2_{0,E}
+ \alpha_* \| \sigma^{\frac{1}{2}} v_h \|^2_{0,E}
+ J_h^E(v_h,v_h) \\
&\quad + \sum_{F \subset \Gamma_E} \left( \frac12  \| \xi_{\beps, F} \PiZeroF v_h \|^2_{0,F} + \| \xi_{\bb, F} \PiZeroF v_h \|^2_{0,F}  \right)  \, .
\end{aligned}
\]
Let $N_F^{\rm max}$ be the maximum number of faces of a generic element in $\Omega_h$. To ensure that the coefficient of the volume gradient term remains strictly positive, we choose the penalty parameter $\delta$ sufficiently small such that:
\[
\delta \leq \frac{\alpha_* \, \beps_0}{4 \, C_{\rm inv} N_F^{\rm max} \beps_1} \, .
\]
Under this condition, and recalling the definition of the boundary norm \eqref{eq:xi-def}, we conclude that:
\[
\begin{aligned}
A^E_\cip (v_h, v_h)
&\geq
\min\left(\frac{\alpha_*}{2}, 1\right) \Bigg( \| \beps^{\frac12} \nabla v_h \|^2_{0,E}
+ \| \sigma^{\frac{1}{2}} v_h \|^2_{0,E} + J_h^E(v_h,v_h) \\
&\quad + \sum_{F \subset \Gamma_E} \left( \| \xi_{\beps, F} \PiZeroF v_h \|^2_{0,F} + \| \xi_{\bb, F} \PiZeroF v_h \|^2_{0,F} \right) \Bigg) \, .
\end{aligned}
\]
Summing over all polyhedral elements $E \in \Omega_h$ yields the desired global coercivity estimate.
\end{proof}
Before proving the second part of the inf-sup condition, we prove the following lemma.
\begin{lemma}\label{lm:wh-estimate}
Under assumption $\textbf{(A1)}$, let $\bb_h$ be the piecewise constant approximation of $\bb$, and let $p_h \coloneqq \bb_h \cdot \nabla \PiZero v_h$ be a piecewise polynomial function. 
Given $v_h \in V_h^k(\Omega_h)$, let $w_h \in V_h^k(\Omega_h)$ be the virtual function constructed as in Proposition \ref{prop:local_oswald_estimate} starting from $p_h$. It holds that
\[
\|  w_h \|_{0,E}
\lesssim
h_E \| \bb_h \cdot \nabla \PiZero v_h \|_{0,\mathcal D(E)} \, ,
\]
 where $\mathcal D(E)$ denotes the set of elements adjacent to $E$. 
\end{lemma}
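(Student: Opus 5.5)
The plan is to work directly at the level of degrees of freedom, using the norm equivalence of Proposition~\ref{prp:norm equivalence} twice: once to pass from $\|w_h\|_{0,E}$ to the DoFs of $w_h$ on $E$, and once to return from the DoFs of $p_h$ on neighbouring elements to $L^2$ norms of $p_h$. Local quasi-uniformity from \textbf{(A1)} will be used to exchange mesh sizes of neighbouring elements.

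\textbf{Step 1: pass to DoFs of $w_h$.} Since $w_h|_E\in V_h^k(E)$, Proposition~\ref{prp:norm equivalence} gives
\[
\|w_h\|_{0,E}^2\lesssim h_E^3\sum_{\nu\in\mathcal N_E}\mathrm{dof}_\nu(w_h)^2 .
\]
By the construction of Proposition~\ref{prop:local_oswald_estimate},
\[
\mathrm{dof}_\nu(w_h)=\frac{h_\nu}{|\omega_\nu|}\sum_{K\in\omega_\nu}\mathrm{dof}^K_\nu(p_h).
\]
The Cauchy--Schwarz inequality then yields
\[
\mathrm{dof}_\nu(w_h)^2\le h_\nu^2\,\frac{1}{|\omega_\nu|}\sum_{K\in\omega_\nu}\mathrm{dof}_\nu^K(p_h)^2 .
\]
By local quasi-uniformity, $h_\nu\simeq h_E\simeq h_K$ for all $K\in\omega_\nu$. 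Hence
\[
\|w_h\|_{0,E}^2\lesssim h_E^2\sum_{\nu\in\mathcal N_E}\sum_{K\in\omega_\nu}h_K^3\,\mathrm{dof}^K_\nu(p_h|_K)^2 .
\]

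\textbf{Step 2: regroup by element.} Each $\omega_\nu$ has uniformly bounded cardinality, and every $K$ appearing belongs to the patch of $E$. Swapping the sums gives
\[
\|w_h\|_{0,E}^2\lesssim h_E^2\sum_{K}h_K^3\sum_{\nu\in\mathcal N_K}\mathrm{dof}^K_\nu(p_h|_K)^2 .
\]
Here $p_h|_K\in\mathbb P_k(K)\subset V_h^k(K)$, since the polynomials are contained in the local virtual space. The lower bound of Proposition~\ref{prp:norm equivalence} therefore turns each inner sum into $\|p_h\|_{0,K}^2$. Taking square roots gives $\|w_h\|_{0,E}\lesssim h_E\|p_h\|_{0,\mathcal D(E)}$, which is the claim, since $p_h=\bb_h\cdot\nabla\Pi^0_k v_h$.

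\textbf{Main obstacle: the patch.} The elements reached are those sharing a DoF with $E$, namely those sharing a vertex, edge or face. This is the vertex patch $\omega_E$ of Proposition~\ref{prop:local_oswald_estimate}, whereas the statement names $\mathcal D(E)$, the adjacent elements. I would interpret $\mathcal D(E)$ in the vertex-neighbour sense, i.e.\ as $\omega_E$, consistently with the previous proposition. Two further points need checking:
\begin{itemize}
\item the number of elements in the patch and the ratios of their diameters must be uniformly bounded, which follows from the star-shapedness and local quasi-uniformity assumptions of \textbf{(A1)};
\item the DoFs of $p_h|_K$ must be understood as the DoFs of the polynomial regarded as an element of $V_h^k(K)$.
\end{itemize}
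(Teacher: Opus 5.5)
Your proof is correct, but it takes a more direct route than the paper. The paper splits $\|w_h\|_{0,E}\le\|\tilde w_h\|_{0,E}+\|w_h-\tilde w_h\|_{0,E}$. The first term is handled exactly as in your Step~1, restricted to the single element $E$ (norm equivalence and $h_\nu\simeq h_E$). The second term is bounded through Proposition~\ref{prop:local_oswald_estimate} by $h_E\sum_{F\in\mathcal F(\omega_E)}h_F^2\|\jump{p_h}\|_{0,F}^2$. A polynomial trace inequality then turns each face jump back into $h_K^{-1}\|p_h\|_{0,K}^2$ on the neighbouring elements.

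You instead bound the averaged DoFs of $w_h$ directly via Cauchy--Schwarz and apply the lower norm equivalence element by element. In other words, you prove plain $L^2$-stability of the averaging. This is shorter and self-contained. It needs neither Proposition~\ref{prop:local_oswald_estimate}, whose face-DoF step the paper only sketches by reference to the 2D case, nor a trace inequality. It also makes clear that jumps play no role in this bound, since the paper's jump terms are immediately turned back into volume norms anyway. What the paper's detour buys is reuse: the decomposition $w_h=\tilde w_h+(w_h-\tilde w_h)$ and the jump estimate are exactly what is needed later for $\eta_{\bb,3}$ in Proposition~\ref{prp:skew}.

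Your remark on the patch is also sound. The paper's own chain runs through $\mathcal F(\omega_E)$, so its $\mathcal D(E)$ is effectively the vertex patch $\omega_E$. A purely face-neighbour reading would not suffice in general, because the vertex DoFs of $w_h$ on $E$ average values of $p_h$ from elements that touch $E$ only at a vertex.
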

\begin{proof}
Let $\tilde w_h$ be the piecewise function defined as in the statement of Proposition \ref{prop:local_oswald_estimate}.
Applying the triangle inequality, we have that
\[
\| w_h \|^2_{0,E}
\lesssim
\| \tilde w_h \|^2_{0,E}
+
\| w_h - \tilde w_h \|^2_{0,E} \, .
\]
Applying the definition of $\tilde w_h$, Proposition \ref{prp:norm equivalence}, and the local quasi-uniformity of $\Omega_h$, we obtain
\begin{equation}\label{eq:oswald-stab-1}
\| \tilde w_h \|^2_{0,E} \lesssim h^2_E \| \bb_h \cdot \nabla \PiZeroE v_h \|^2_{0,E} \, .
\end{equation}
For the second term, using Proposition \ref{prop:local_oswald_estimate} and trace inequality, we obtain
\begin{equation}\label{eq:oswald-stab-2}
\begin{aligned}
\| \tilde{w}_h - w_h \|_{0,E}^2 
&\lesssim 
h_E \sum_{F \in \mathcal{F}(\omega_E)} h_F^2 \, \| \jump{p_h} \|_{0,F}^2 \\
&\lesssim
h_E \sum_{K \in \mathcal{D}(E)} h_K \, \| \bb_h \cdot \nabla \Pi^{0,K}_k v_h \|_{0,K}^2
\, .
\end{aligned}
\end{equation}
Gathering \eqref{eq:oswald-stab-1} and \eqref{eq:oswald-stab-2}, combined with mesh assumption \textbf{(A1)}, and taking the square root, we obtain the inequality that we want to prove.
\end{proof}
\begin{proposition}\label{prp:skew}
Under assumption \textbf{(A1)}, for every $v_h \in V_h^k(\Omega_h)$, it holds that
\[
A_\cip(v_h, w_h) \gtrsim C^*\sum_{E \in \Omega_h}
h_E \, \| \bb \cdot \nabla \PiZeroE v_h \|^2_{0,E}
-
C_* A_\cip (v_h,v_h) \, ,
\]
where $w_h$ is the function defined in Lemma \ref{lm:wh-estimate}.
\end{proposition}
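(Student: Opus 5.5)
We plan to expand $A_\cip(v_h,w_h)$ term by term and extract the convective contribution from $\bskew_h(v_h,w_h)$. The test function $w_h$ is built from $p_h=\bb_h\cdot\nabla\PiZero v_h$, whose DoFs are scaled by $h_\nu$. On each element we therefore write $w_h=\tilde w_h+(w_h-\tilde w_h)$, where $\tilde w_h|_E$ is morally $h_E\,p_h|_E$ up to the local quasi-uniformity factor $h_\nu/h_E\simeq 1$. The key identity is that $\PiZeroE\tilde w_h$ reproduces $h_\nu$-scaled polynomial data. Since $p_h|_E\in\mathbb P_k(E)$ and the DoFs of $\tilde w_h$ are $h_\nu$ times those of $p_h$, the $L^2$-projection $\PiZeroE\tilde w_h$ is close to $h_E p_h$. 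Its deviation is controlled by Proposition~\ref{prp:norm equivalence}, using the bounded variation of $h_\nu/h_E$ over the DoFs of $E$. The main term is then
\[
\int_E (\bb\cdot\nabla\PiZeroE v_h)\,\PiZeroE w_h\,{\rm d}E \approx h_E\int_E(\bb\cdot\nabla\PiZeroE v_h)(\bb_h\cdot\nabla\PiZeroE v_h)\,{\rm d}E .
\]
Replacing $\bb$ by $\bb_h$ costs $h_E\|\bb\|_{W^{1,\infty}}$ and is absorbed by inverse estimates into $\|\sigma^{1/2}v_h\|_{0,E}^2$. This yields the positive quantity $C^*h_E\|\bb\cdot\nabla\PiZeroE v_h\|^2_{0,E}$.

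The remaining pieces of $\bskew_h$ are handled as follows. The term $-\tfrac12 b_h(w_h,v_h)$ is integrated by parts on each element, using $\nabla\cdot\bb=0$. The face terms then recombine into jumps across internal faces and boundary contributions on $\Gamma$. This yields $\tfrac12\int_E(\bb\cdot\nabla\PiZeroE v_h)\PiZeroE w_h$ plus face and projection residuals. The residuals are controlled by $\|w_h\|_{0,E}$ times quantities appearing in $A_\cip(v_h,v_h)$: the jump seminorm $J_h$, the stabilization part $S_h^E((I-\PiZeroE)v_h,\cdot)$ contained in $J_h^E$, and the boundary norms $\xi_{\bb,F}$. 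Lemma~\ref{lm:wh-estimate} gives $\|w_h\|_{0,E}\lesssim h_E\|p_h\|_{0,\mathcal D(E)}$, and Corollary~\ref{crl:trace} handles the face traces. Young's inequality then splits each residual into $\epsilon\, h_E\|\bb\cdot\nabla\PiZero v_h\|^2_{0,\mathcal D(E)}$ plus $C_\epsilon$ times a term of $A_\cip(v_h,v_h)$, the latter controlled by Proposition~\ref{prp:symm}. Finite overlap of the patches $\mathcal D(E)$ under \textbf{(A1)} allows the sums to be recombined.

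The symmetric terms $a_h,c_h,J_h,\mathcal N_h$ are bounded by Cauchy--Schwarz in the form $A^{\rm sym}(v_h,w_h)\le A^{\rm sym}(v_h,v_h)^{1/2}A^{\rm sym}(w_h,w_h)^{1/2}$. We then show $A^{\rm sym}(w_h,w_h)\lesssim\sum_E h_E\|\bb\cdot\nabla\PiZeroE v_h\|_{0,E}^2$ plus absorbable terms, using:
\begin{itemize}
\item Proposition~\ref{prp:vem-inverse} for $\|\beps^{1/2}\nabla w_h\|^2\lesssim\beps_1h_E^{-2}\|w_h\|^2$; since $\beps_1\lesssim h_E\|\bb\|_\infty$ in the advection-dominated regime, this gives the required $h_E$ scaling;
\item $\sigma\,\|w_h\|^2\lesssim h_E^2\|p_h\|^2$;
\item the polynomial inverse and trace inequalities for the $J_h$ and Nitsche terms, where $\gamma_F h_F^2\|\jump{\nabla\PiZero w_h}\|_{0,F}^2\lesssim \gamma h_E^{-1}\|\bb\|^2_\infty\|w_h\|^2_{0,\mathcal D(E)}$.
\end{itemize}
A final Young's inequality with a small weight produces the stated form with constants $C^*$ and $C_*$.

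The main obstacle is the boundary treatment. Because $w_h$ does not vanish on $\Gamma$, the integration by parts in $b_h(w_h,v_h)$ leaves terms $\int_F(\bb\cdot\nn)\PiZeroF v_h\PiZeroF w_h$ on $\Gamma$ that must be matched against $\|\xi_{\bb,F}\PiZeroF v_h\|$. A second difficulty is the mismatch between $\PiZeroE\tilde w_h$ and $h_E p_h$ caused by the variable $h_\nu$. I would first try to control that mismatch by writing $h_\nu=h_E+(h_\nu-h_E)$ and exploiting $|h_\nu-h_E|\lesssim h_E$ together with the constant $C\ge 1$ of local quasi-uniformity. If this proves insufficient, the fallback is to let the resulting factor enter $C^*$ and rely on its being bounded below, since $h_\nu/h_E\ge 1/C>0$.
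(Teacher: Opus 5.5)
There are two genuine gaps. Your overall architecture is the paper's: isolate $(\bb_h\cdot\nabla\PiZeroE v_h,\tilde w_h)_{0,E}$, integrate $b_h^E(w_h,v_h)$ by parts, and absorb the rest by Young. Applying Cauchy--Schwarz to the whole symmetric part $a_h+c_h+J_h+\mathcal N_h$ is legitimate, since Proposition~\ref{prp:symm} makes it positive semidefinite. Handling the face residuals through the $S_h^E$ part of $J_h^E$ and the $\xi_{\bb,F}$ norms, as you suggest, also works.

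The first gap is in the diffusive contributions. You bound $\|\beps^{1/2}\nabla w_h\|_{0,E}^2\lesssim\beps_1h_E^{-2}\|w_h\|_{0,E}^2\lesssim\beps_1\|p_h\|_{0,\mathcal D(E)}^2$. You then convert this into $h_E\|p_h\|^2$ by assuming $\beps_1\lesssim h_E\|\bb\|_{\infty}$. That is not a hypothesis of the proposition, and for fixed $\beps$ it fails as $h\to0$, so your constants would blow up under refinement. The Nitsche penalty term has the same problem, since $\frac{\beps_1}{\delta h_E}\|\PiZeroF w_h\|^2_{0,F}\lesssim\frac{\beps_1}{\delta}\|p_h\|^2_{0,\mathcal D(E)}$, and so does the Nitsche consistency term containing $\nabla w_h$. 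The paper never compares these with the streamline term. It uses $\|p_h\|_{0,K}\lesssim\|\bb\|_{L^\infty(K)}\|\nabla\Pi^{0,K}_k v_h\|_{0,K}\lesssim\|\bb\|_{L^\infty(K)}\beps_0^{-1/2}\|\beps^{1/2}\nabla v_h\|_{0,K}$. This gives $\|\beps^{1/2}\nabla w_h\|_{0,E}\lesssim(\beps_1/\beps_0)^{1/2}\|\bb\|_{L^\infty(\mathcal D(E))}\|\beps^{1/2}\nabla v_h\|_{0,\mathcal D(E)}$, which is charged directly to $A_\cip(v_h,v_h)$ in every regime. With that replacement your Cauchy--Schwarz route goes through.

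The second gap is that the step where the CIP penalty actually does its work is missing. You justify $\PiZeroE w_h\approx h_Ep_h$ only through the mismatch $\PiZeroE\tilde w_h-h_Ep_h$, and never say how $(\bb\cdot\nabla\PiZeroE v_h,\,w_h-\tilde w_h)_{0,E}$ is controlled. This term is not of the form ``$\|w_h\|_{0,E}$ times an energy quantity''. Lemma~\ref{lm:wh-estimate} alone only gives $\|w_h-\tilde w_h\|_{0,E}\lesssim h_E\|p_h\|_{0,\mathcal D(E)}$, a contribution of the same order as the positive term, which cannot be absorbed. The face terms from your integration by parts involve traces of $\PiZero v_h$ and $(\PiZeroF-\PiZeroE)v_h$, never gradient jumps, so they cannot supply this control either. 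What is needed is the paper's treatment of $\eta_{\bb,3}$:
\begin{itemize}
\item apply Young's inequality with a weight below the coercivity constant $C_1$;
\item invoke Proposition~\ref{prop:local_oswald_estimate}, which gives $h_E^{-1}\|w_h-\tilde w_h\|^2_{0,E}\lesssim\sum_{F\in\mathcal F(\omega_E)}h_F^2\|\jump{\bb_h\cdot\nabla\PiZero v_h}\|^2_{0,F}$;
\item split the jump into $\bb_h\cdot\jump{\nabla\PiZero v_h}$, controlled by the gradient-jump part of $J_h$, plus $\jump{\bb_h}\cdot\nabla\PiZero v_h$, where $|\jump{\bb_h}|\lesssim h|\bb|_{W^{1,\infty}}$ lets you charge it to $\|\sigma^{1/2}v_h\|_{0,\mathcal D(E)}$ by inverse and trace estimates.
\end{itemize}

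Finally, your fallback for the $h_\nu$ mismatch is not a valid argument. The bound $h_\nu/h_E\ge 1/C$ does not make $(p_h,\tilde w_h)_{0,E}$ positive, because unequal positive weights on the DoFs need not preserve positivity of the $L^2$ pairing. Your perturbation idea works only when $C-1$ is small relative to the constants of Proposition~\ref{prp:norm equivalence}. The paper asserts $\eta_{\bb,1}\ge C_1h_E\|p_h\|^2_{0,E}$ without addressing this, so that step needs a real proof in either version.
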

\begin{proof}
We start again by fixing an element $E \in \Omega_h$. By definition of the bilinear form $A_\cip^E(\cdot, \cdot)$, it holds that
\[
A_\cip^E(v_h, w_h) = a^E_h(v_h,w_h) + \bskewEh(v_h,w_h) + c^E_h(v_h,w_h) + J^E_h(v_h,w_h) + \mathcal{N}_h^E(v_h,w_h) \, .
\]
We proceed by bounding each of these five terms.

\medskip 

\noindent \textit{Estimate of $a^E_h(v_h,w_h)$:}
For the diffusive term, using the continuity \eqref{eq:stab-ah} of the discrete bilinear form $a_h^E(\cdot, \cdot)$, the ellipticity of the diffusion tensor \eqref{eq:ellitticità}, the inverse estimate of Proposition \ref{prp:vem-inverse}, and Lemma \ref{lm:wh-estimate}, we obtain
\begin{equation}\label{eq:a_h estimate}
\begin{aligned}
a_h^E(v_h, w_h)
&\gtrsim - \| \beps^\frac12 \nabla v_h \|_{0,E} \| \beps^\frac12 \nabla w_h \|_{0,E} \\
&\gtrsim - \| \beps^\frac12 \nabla v_h \|_{0,E} \left( \beps_1^\frac{1}{2} h_E^{-1} \| w_h \|_{0,E} \right) \\
&\gtrsim - \| \beps^\frac12 \nabla v_h \|_{0,E} \left( \beps_1^\frac{1}{2} \, \| \bb_h \cdot \nabla \PiZeroE v_h \|_{0,\mathcal{D}(E)} \right) \\
&\gtrsim
- \| \beps^\frac12 \nabla v_h \|_{0,E} \, \left( \left(\frac{\beps_1}{\beps_0}\right)^\frac{1}{2} \, \| \bb \|_{[L^\infty(\mathcal D(E))]^3} \| \beps^\frac{1}{2}  \nabla  v_h \|_{0,\mathcal{D}(E)} \right) \\
&\gtrsim
-  \left(\frac{\beps_1}{\beps_0}\right)^\frac{1}{2} \, \| \bb \|_{[L^\infty(\mathcal D(E))]^3} \| \beps^\frac{1}{2}  \nabla  v_h \|^2_{0,\mathcal{D}(E)}  \, .
\end{aligned}
\end{equation}

\medskip 

\noindent \textit{Estimate of $c^E_h(v_h,w_h)$:}
Similarly, we estimate the reaction term using \eqref{eq:stab-ch} and Lemma \ref{lm:wh-estimate}
\begin{equation}\label{eq:c_h estimate}
\begin{aligned}
c_h^E(v_h, w_h)
&\gtrsim - \| \sigma^{\frac{1}{2}} v_h \|_{0,E} \| \sigma^{\frac{1}{2}} w_h \|_{0,E} \\
&\gtrsim - \| \sigma^{\frac{1}{2}} v_h \|_{0,E} \left( \| \sigma \|^\frac12_{L^\infty(E)} h_E \| \bb_h \cdot \nabla \Pi^0_k v_h \|_{0,\mathcal{D}(E)} \right) \, .
\end{aligned}
\end{equation}

\medskip 

\noindent \textit{Estimate of $J^E_h(v_h,w_h)$:}
The estimate of the jump bilinear form follows by applying Cauchy-Schwarz inequality
\[
J_h^E(v_h,w_h) \gtrsim - (J_h^E(v_h, v_h))^\frac{1}{2} (J_h^E(w_h, w_h))^\frac{1}{2} \, .
\]
The second term in this last inequality is handled by a polynomial trace inequality, the properties of the stabilization term \eqref{eq:stab-prop}, the inverse estimate in Proposition \ref{prp:vem-inverse}, and Lemma \ref{lm:wh-estimate}
\begin{equation}\label{eq:j_h estimate}
\begin{aligned}
J_h^E(w_h, w_h) 
&=
\frac{1}{2} \sum_{F \subset \partial E \setminus \Gamma_E} \int_F \gamma_F \, h_F^2 \, \jump{\nabla \PiZero w_h}^2 {\rm d }F
+
\gamma_E \, |\partial E| \, S^E_h\bigl( (I - \PiZeroE) w_h, (I-\PiZeroE) w_h \bigr) \\
&\lesssim \gamma_E h_E^{-1} \| w_h \|^{2}_{0,\mathcal D(E)} + \gamma_E h_E | w_h |^{2}_{1,E} \\
& \lesssim \gamma_E \, h_E^{-1} \, \| w_h \|^{2}_{0,\mathcal D(E)}  
\lesssim
\, \gamma_E \, h_E \,  \| \bb_h \cdot \nabla \PiZero v_h \|^2_{0,\mathcal D (\mathcal D(E))} \, .
\end{aligned}
\end{equation}
\medskip 

\noindent \textit{Estimate of $\mathcal N^E _h(v_h,w_h)$:}
Concerning the Nitsche bilinear form, by definition, we have that
\[
\begin{aligned}
\mathcal{N}_h^E(v_h,w_h) 
\coloneqq
\sum_{F \subset \Gamma_E} \! \Bigg(
&- \int_F (\beps \PiVecMinusOneE \nabla w_h \cdot \nn^E) \, \PiZeroF v_h \, {\rm d}F 
- \int_F (\beps \PiVecMinusOneE \nabla v_h \cdot \nn^E) \, \PiZeroF w_h \, {\rm d}F \\
&
+ \frac{\|\beps\|_{[L^\infty(F)]^{3 \times 3}}}{\delta h_E} \int_F \PiZeroF v_h \PiZeroF w_h \, {\rm d}F   
+ \frac{1}{2} \int_F \vert \bb \cdot \nn^E \vert \, \PiZeroF v_h \PiZeroF w_h \, {\rm d}F \Bigg) \, .
\end{aligned}
\]
On the first one, recalling the definition \eqref{eq:xi-def}, arguing as in \eqref{eq:beps-estimate-boundary} and in \eqref{eq:a_h estimate}, we obtain
\begin{equation}\label{eq:Nitsche-infsup-1}
\begin{aligned}
- \int_F (\beps \PiVecMinusOneE \nabla w_h \cdot \nn^E) \, \PiZeroF v_h \, {\rm d}F 
&\gtrsim - \Vert \xi_{\beps, F} \PiZeroF v_h \Vert_{0,F} \, h_E^\frac{1}{2} \, \| \beps^{\frac{1}{2}} \PiVecMinusOneE \nabla w_h \cdot \nn^E\|_{0,F} \\
&\gtrsim - \Vert \xi_{\beps, F} \PiZeroF v_h \Vert_{0,F} \, \left(\frac{\beps_1}{\beps_0}\right)^\frac{1}{2} \,  \| \beps^\frac{1}{2}\nabla w_h \|_{0,E} \\ 
&\gtrsim - \Vert \xi_{\beps, F} \PiZeroF v_h \Vert_{0,F} \, \left( \frac{\beps_1}{\beps_0} \, \| \bb \|_{[L^\infty(\mathcal D(E))]^3} \| \beps^\frac{1}{2}  \nabla  v_h \|_{0,\mathcal{D}(E)} \right) \, .
\end{aligned}
\end{equation}
For the second term, arguing as in \eqref{eq:beps-estimate-boundary} and \eqref{eq:a_h estimate}, applying Corollary \ref{crl:trace}, and Lemma \ref{lm:wh-estimate}, we have that
\begin{equation}\label{eq:Nitsche-infsup-2}
\begin{aligned}
- \int_F (\beps \PiVecMinusOneE \nabla v_h \cdot \mathbf{n}^E) \, \Pi^{0,F}_k w_h \, {\rm d}F
&\gtrsim 
- \| \beps \PiVecMinusOneE \nabla v_h \cdot \mathbf{n}^E \|_{0,F} \| \PiZeroF w_h \|_{0,F}\\
&\gtrsim 
- \left( \frac{\beps_1}{\sqrt{\beps_0}} h_E^{-\frac{1}{2}} \| \beps^\frac12 \nabla v_h \|_{0,E} \right) \left( h_E^{-\frac{1}{2}} \| w_h \|_{0,E} \right) \\
&\gtrsim 
- \left(\frac{\beps_1}{\sqrt{\beps_0}} h_E^{-1} \| \beps^\frac12 \nabla v_h \|_{0,E}\right) \, \left( h_E \, \| \bb_h \cdot \nabla \Pi^0_k v_h \|_{0,\mathcal D(E)} \right) \\
&\gtrsim 
- \frac{\beps_1}{\sqrt{\beps_0}} \| \beps^\frac12 \nabla v_h \|_{0,E} \, \| \bb \|_{[L^\infty(\mathcal{D}(E))]^3} \| \nabla \Pi^0_k v_h \|_{0,\mathcal D(E)} \\
&\gtrsim 
- \frac{\beps_1}{\beps_0} \| \bb \|_{[L^\infty(\mathcal{D}(E))]^3}  \, \| \beps^\frac12 \nabla v_h \|^2_{0,\mathcal D(E)} \, .
\end{aligned}
\end{equation}
From the definition of $\xi$ in \eqref{eq:xi-def}, exploting \eqref{eq:beps-estimate-boundary} and \eqref{eq:a_h estimate}, we can write:
\begin{equation}\label{eq:Nitsche-infsup-3}
\begin{aligned}
\frac{\|\beps\|_{[L^\infty(F)]^{3 \times 3}}}{\delta h_E} \int_F \PiZeroF v_h \PiZeroF w_h \, {\rm d}F 
&\gtrsim 
-\Vert \xi_{\beps, F} \PiZeroF v_h \Vert_{0,F} \left( \sqrt{\frac{\beps_1}{h_E}} \Vert w_h \Vert_{0,F} \right) \\
&\gtrsim 
-\Vert \xi_{\beps, F} \PiZeroF v_h \Vert_{0,F} \left( \frac{\sqrt{\beps_1}}{h_E} \Vert w_h \Vert_{0,E} \right) \\
&\gtrsim 
-\Vert \xi_{\beps, F} \PiZeroF v_h \Vert_{0,F} \,\frac{\sqrt{\beps_1}}{h_E} \left( h_E \| \bb_h \cdot \nabla \PiZero v_h \|_{0,\mathcal D(E)} \right) \\
&\gtrsim 
-\Vert \xi_{\beps, F} \PiZeroF v_h \Vert_{0,F} \, \left( \frac{\beps_1}{\beps_0}\right)^{\frac12} \, \| \bb \|_{[L^\infty(\mathcal D (E))]^3} \| \beps^\frac{1}{2} \nabla v_h \|_{0,\mathcal D(E)} \, .
\end{aligned}
\end{equation}
Similarly, the estimate of the convective term in the Nitsche form yields
\begin{equation}\label{eq:Nitsche-infsup-4}
\begin{aligned}
\frac{1}{2} \int_F \vert \bb \cdot \nn^E \vert \, \PiZeroF v_h \PiZeroF w_h \, {\rm d}F 
&\gtrsim
-\Vert \xi_{\bb, F} \PiZeroF v_h \Vert_{0,F} \| \bb_h \|_{[L^\infty(F)]^3}^{\frac12} \,\Vert\PiZeroF w_h \Vert_{0,F} \\
&\gtrsim
-\Vert \xi_{\bb, F} \PiZeroF v_h \Vert_{0,F} \| \bb_h \|_{[L^\infty(F)]^3}^{\frac12} \, h_E^{-\frac12} \, \Vert w_h \Vert_{0,E} \\
&\gtrsim 
-\Vert \xi_{\bb, F} \PiZeroF v_h \Vert_{0,F} \, \left(h_E^{\frac{1}{2}} \, \| \bb \|_{[L^\infty(F)]^3}^{\frac12} \, \| \bb_h \cdot \nabla \PiZero v_h \|_{0,\mathcal D(E)} \right)\, .
\end{aligned}
\end{equation}
Combining estimates \eqref{eq:Nitsche-infsup-1}, \eqref{eq:Nitsche-infsup-2}, \eqref{eq:Nitsche-infsup-3}, and \eqref{eq:Nitsche-infsup-4}, and summing over all boundary faces $F \subset \Gamma_E$, we arrive at the final estimate for the Nitsche bilinear form:
\[
\begin{aligned}
\mathcal{N}_h^E(v_h,w_h) 
\gtrsim 
- \sum_{F \subset \Gamma_E} &\Bigg( \Vert \xi_{\beps, F} \PiZeroF v_h \Vert_{0,F} \, \left( \frac{\beps_1}{\beps_0} \, \| \bb \|_{[L^\infty(\mathcal D(E))]^3} \| \beps^\frac{1}{2}  \nabla  v_h \|_{0,\mathcal{D}(E)} \right)\\
&\qquad + \frac{\beps_1}{\beps_0} \| \bb \|_{[L^\infty(\mathcal{D}(E))]^3}  \, \| \beps^\frac12 \nabla v_h \|^2_{0,\mathcal D(E)} \\
&\qquad +\Vert \xi_{\beps, F} \PiZeroF v_h \Vert_{0,F} \, \left( \left(\frac{\beps_1}{\beps_0}\right)^{\frac12} \, \| \bb \|_{[L^\infty(\mathcal D (E))]^3} \| \beps^\frac{1}{2} \nabla v_h \|_{0,\mathcal D(E)} \right) \\
&\qquad +\Vert \xi_{\bb, F} \PiZeroF v_h \Vert_{0,F} \, \left(h_E^{\frac{1}{2}} \, \| \bb \|_{[L^\infty(F)]^3}^{\frac12} \, \| \bb_h \cdot \nabla \PiZero v_h \|_{0,\mathcal D(E)}\right) \Bigg) \, .
\end{aligned}
\]

\medskip 

\noindent \textit{Estimate of $\bskewEh(v_h,w_h)$:}
The last term to estimate is the convective term. By definition, we have that
\begin{equation} \label{eq:infsup-estimate-bh}
\bskewEh(v_h, w_h) 
=
\frac{1}{2} \bigl( b_h^E(v_h, w_h) - b_h^E(w_h, v_h)\bigr) \, .
\end{equation}
Recalling the definition of $b_h^E(v_h, w_h)$ \eqref{eq:bh_E}, we begin by splitting its volume term as
\[
\begin{aligned}
\bigl ( \bb \cdot \nabla \PiZeroE v_h, \PiZeroE w_h \bigr)_{0,E}
&= 
\bigl ( \bb \cdot \nabla \PiZeroE v_h, w_h \bigr)_{0,E}
+ 
\bigl ( \bb \cdot \nabla \PiZeroE v_h, (\PiZeroE - I) w_h \bigr)_{0,E} \\
&= 
\bigl ( \bb \cdot \nabla \PiZeroE v_h, \tilde w_h \bigr)_{0,E}
+ 
\bigl ( \bb \cdot \nabla \PiZeroE v_h, w_h - \tilde w_h \bigr)_{0,E} \\
&\qquad +\bigl ( \bb \cdot \nabla \PiZeroE v_h, (\PiZeroE - I) w_h \bigr)_{0,E} \\
&= 
\bigl ( \bb_h \cdot \nabla \PiZeroE v_h, \tilde w_h \bigr)_{0,E}
+ 
\bigl ( (\bb - \bb_h) \cdot \nabla \PiZeroE v_h, \tilde w_h \bigr)_{0,E}
\\
&\qquad +\bigl ( \bb \cdot \nabla \PiZeroE v_h, w_h - \tilde w_h \bigr)_{0,E} 
+
\bigl ( \bb \cdot \nabla \PiZeroE v_h, (\PiZeroE - I) w_h \bigr)_{0,E} \\
&\eqqcolon \eta_{\bb,1} + \eta_{\bb,2} + \eta_{\bb,3} + \eta_{\bb,4}\, .
\end{aligned}
\]
For the moment, we do not consider the boundary term in $b^E_h(v_h,w_h)$.
Concerning $\eta_{\bb,1}$, exploiting the link between the degrees of freedom of $\tilde{w}_h$ and $\bb_h \cdot \nabla \PiZeroE v_h$, we can lower bound the inner product with the weighted $L^2$-norm of $p_h$, and applying Lemma \ref{lm:bramble}, we obtain that
\[
\begin{aligned}
\eta_{\bb,1} 
&=
\bigl ( \bb_h \cdot \nabla \PiZeroE v_h, \tilde w_h \bigr)_{0,E} \geq
C_1 \, h_E \, \| \bb_h \cdot \nabla \PiZeroE v_h \|_{0,E}^2 \\
& \geq
C_1 \, h_E \, \| \bb \cdot \nabla \PiZeroE v_h \|_{0,E}^2
- C_1 \, h_E \, \| (\bb - \bb_h) \cdot \nabla \PiZeroE v_h \|_{0,E}^2 \\
& \geq
C_1 \, h_E \, \| \bb \cdot \nabla \PiZeroE v_h \|_{0,E}^2
- C_1 \, h_E \, | \bb |^2_{[W^{1,\infty}(E)]^3}\,{\sigma_0^{-1}} \, \| \sigma^\frac{1}{2} v_h \|_{0,E}^2 \, .
\end{aligned}
\]
To bound $\eta_{\bb,2}$, we use Cauchy-Schwarz inequality, Lemma \ref{lm:bramble} and obtain
\[
\begin{aligned}
\eta_{\bb,2 } 
&\geq 
-\|\bb - \bb_h\|_{[L^\infty(E)]^3} \|\nabla \PiZeroE v_h\|_{0,E} \|\tilde w_h \|_{0,E} \\
&\gtrsim
- | \bb |_{[W^{1,\infty}(E)]^3} \| v_h \|_{0,E} \, h_E \,  \| \bb_h \cdot \nabla \PiZeroE v_h \|_{0,E}  \\
&\gtrsim
- h_E^\frac{1}{2}\,| \bb |_{[W^{1,\infty}(E)]^3}\,{\sigma_0^{-\frac{1}{2}}} \| \sigma^\frac{1}{2} v_h \|_{0,E} \, \left(h_E^\frac{1}{2} \,  \| \bb_h \cdot \nabla \PiZeroE v_h \|_{0,E} \right)  \, .
\end{aligned}
\]
Using Cauchy-Schwarz inequality, Proposition \ref{prop:local_oswald_estimate}, the estimate of $\eta_{\bb,3}$ reads as
\[
\begin{aligned}
\eta_{\bb,3}
&\geq
- \| \bb \cdot \nabla \PiZeroE v_h \|_{0,E} \| w_h - \tilde w_h  \|_{0,E} \\
&\gtrsim
- C_2 \, h_E \| \bb \cdot \nabla \PiZeroE v_h \|^2_{0,E} 
- \frac{1}{C_2 \, h_E}\| w_h - \tilde w_h  \|^2_{0,E} \\
&\gtrsim
- C_2 \, h_E \, \| \bb \cdot \nabla \PiZeroE v_h \|^2_{0,E} 
- \sum_{F \in \mathcal{F}(\omega_E)} h_F^2 \, \| [\bb \cdot \nabla \PiZeroE v_h] \|_{0,F}^2
\end{aligned}
\]
Here $C_2$ is chosen in order to guarantee that $C_1 > C_2$ since we want to keep the positivity of the term $ h_E \, \| \bb \cdot \nabla \PiZeroE v_h \|^2_{0,E} $.
Exploiting the same steps of Equation (3.33) and (3.34) in \cite{VEM-CIP-Conf} for estimating the jumps of $\bb \cdot \nabla \PiZeroE v_h$, we derive 
\[
\eta_{\bb,3}
\gtrsim
- C_2 \, h_E \, \| \bb \cdot \nabla \PiZeroE v_h \|^2_{0,E} 
- C_3
\left(
J_h^{\mathcal D(E)}(v_h,v_h) + h_E \, | \bb |^2_{[W^{1,\infty}(\mathcal D(E))]^3} \, \sigma_0^{-1} \, \| \sigma^\frac12 v_h \|^2_{0,\mathcal{D}(E)}
\right) \, .
\]
Finally, on $\eta_{\bb,4}$ exploiting the orthogonality of $\PiZeroE$, and Lemma \ref{lm:bramble}, we obtain
\[
\begin{aligned}
\eta_{\bb,4} &=
\bigl ( \bb \cdot \nabla \PiZeroE v_h, (\PiZeroE - I) w_h \bigr)_{0,E} =
\bigl ( (\bb - \bb_h) \cdot \nabla \PiZeroE v_h, (\PiZeroE - I) w_h \bigr)_{0,E} \\
&\gtrsim
-| \bb |_{[W^{1,\infty}(E)]^3} \, \sigma_0^{-1} \,\| \sigma ^\frac{1}{2}v_h \|^{2}_{0,\mathcal D(E)} \, .
\end{aligned}
\]
We estimate the second term in \eqref{eq:infsup-estimate-bh}. Integrating by parts, and adding and subtracting the term $\bigl( (\bb \cdot \nn^E) \PiZeroF w_h, \PiZeroF 
v_h \bigr)_{0,F}$, we obtain
\[
\begin{aligned}
- b_h^E(w_h, v_h) 
& = - \bigl( \bb \cdot \nabla \PiZeroE w_h, \PiZeroE v_h \bigr)_{0,E}
-
\sum_{F \subset \partial E}\bigl( (\bb \cdot \nn^E) (\PiZeroF - \PiZeroE) w_h, \PiZeroE v_h \bigr)_{0,F}
\\
& =
\bigl( \bb \cdot \nabla \PiZeroE v_h, \PiZeroE w_h \bigr)_{0,E}
- 
\sum_{F \subset \partial E}
\bigl( (\bb \cdot \nn^E)  \PiZeroF w_h, \PiZeroE v_h \bigr)_{0,F} \\
& =
\bigl( \bb \cdot \nabla \PiZeroE v_h, \PiZeroE w_h \bigr)_{0,E}
- 
\sum_{F \subset \partial E}
\bigl( (\bb \cdot \nn^E)  \PiZeroF w_h, (\PiZeroE - \PiZeroF) v_h \bigr)_{0,F} \\
& \quad - 
\sum_{F \subset \partial E}
\bigl( (\bb \cdot \nn^E) \PiZeroF w_h, \PiZeroF  
v_h \bigr)_{0,F} \, .
\end{aligned}
\]

The first term is estimated exactly as in the previous case. The second term cancels with the boundary term in the definition of $b_h^E(v_h,w_h)$.
The last term need to be estimated only on the boundary faces, since the projection $\PiZeroF$ does not depend on the element $E$.
The estimate of these boundary integral is the same of \eqref{eq:Nitsche-infsup-4}.
Combining the bounds for $\eta_{\bb,1}$, $\eta_{\bb,2}$, $\eta_{\bb,3}$, $\eta_{\bb,4}$, we can collect all the terms to obtain the local estimate for the skew-symmetric convective form:
\begin{equation} \label{eq:bskew-final-local-updated}
\begin{aligned}
\bskewEh(v_h, w_h) 
&\gtrsim (C_1 - C_2) \, h_E \| \bb \cdot \nabla \PiZeroE v_h \|_{0,E}^2 \\
&\quad - C \Bigg( 
h_E \, | \bb |^2_{[W^{1,\infty}(E)]^3}\,{\sigma_0^{-1}} \, \|\sigma^\frac{1}{2} v_h \|_{0,E}^2 \\
&\qquad\quad +  h_E^\frac{1}{2}\,| \bb |_{[W^{1,\infty}(E)]^3}\,{\sigma_0^{-\frac{1}{2}}} \| \sigma^\frac{1}{2} v_h \|_{0,E} \, \left(h_E^\frac{1}{2} \,  \| \bb_h \cdot \nabla \PiZeroE v_h \|_{0,E} \right)  \\
&\qquad\quad +  J_h^{\mathcal D(E)}(v_h,v_h) + h_E \, | \bb |^2_{[W^{1,\infty}(\mathcal D(E))]^3} \, \sigma_0^{-1} \, \| \sigma^\frac12 v_h \|^2_{0,\mathcal{D}(E)}\\
&\qquad\quad+ | \bb |_{[W^{1,\infty}(E)]^3} \, \sigma_0^{-1} \,\| \sigma ^\frac{1}{2}v_h \|^{2}_{0,\mathcal D(E)} \\
&\qquad\quad + \sum_{F \subset \partial E \cap \Gamma} \Vert \xi_{\bb, F} \PiZeroF v_h \Vert_{0,F} \, \left(h_E^{\frac{1}{2}} \, \| \bb \|_{[L^\infty(F)]^3}^{\frac12} \, \| \bb_h \cdot \nabla \PiZero v_h \|_{0,\mathcal D(E)} \right)  \Bigg) \, .
\end{aligned}
\end{equation}
By appropriately choosing the stabilization parameters (as discussed for $\eta_{\bb,3}$), we ensure that $C_1 > C_2$, thus guaranteeing that the first term provides a strictly positive control on the streamline derivative. 

\medskip

We introduce the local energy semi-norm:
\begin{equation}
\begin{aligned}
\mathcal{E}_h^E(v_h, v_h) &\coloneqq \| \beps^\frac12 \nabla v_h \|_{0,\mathcal{D}(E)}^2 + \| \sigma^\frac12 v_h \|_{0,\mathcal{D}(E)}^2 + J_h^{\mathcal{D}(E)}(v_h,v_h) \\
& \qquad + \sum_{F \subset \Gamma_E} \left( \| \xi_{\beps, F} \PiZeroF v_h \|^2_{0,F} + \| \xi_{\bb, F} \PiZeroF v_h \|^2_{0,F} \right)   \, ,
\end{aligned}
\end{equation}
which collects all the symmetric and boundary penalty contributions. 
We now consider all the local bounds derived above. Summing over all the elements $E \in \Omega_h$ and exploiting assumption \textbf{(A1)}, we note that each element and its associated patches are counted at most a uniformly bounded number of times. 
We obtain
\begin{equation} \label{eq:global_sum_master}
\begin{aligned}
A_\cip(v_h, w_h) 
&\geq (C_1 - C_2) \sum_{E \in \Omega_h} h_E \| \bb \cdot \nabla \PiZeroE v_h \|_{0,E}^2 - C_3 \sum_{E \in \Omega_h} \mathcal{E}_h^E(v_h, v_h) \\
&\quad - C_4 \left( \sum_{E \in \Omega_h} \mathcal{E}_h^E(v_h, v_h) \right)^\frac{1}{2} \left( \sum_{E \in \Omega_h} h_E \| \bb_h \cdot \nabla \PiZeroE v_h \|_{0,E}^2 \right)^\frac{1}{2} \, .
\end{aligned}
\end{equation}
To absorb the last term using Young's inequality, we first need to relate the streamline derivative along $\bb_h$ back to the continuous field $\bb$. Adding and subtracting $\bb$, we obtain:
\[
\| \bb_h \cdot \nabla \PiZeroE v_h \|_{0,E}^2 
\lesssim
\| \bb \cdot \nabla \PiZeroE v_h \|_{0,E}^2 +  \| (\bb_h - \bb) \cdot \nabla \PiZeroE v_h \|_{0,E}^2 \, .
\]
The second term on the right-hand side can be bounded using the approximation properties of $\bb_h$, similarly to what was done for $\eta_{\bb,1}$:
\[
h_E \| (\bb_h - \bb) \cdot \nabla \PiZeroE v_h \|_{0,E}^2 \lesssim h_E \, | \bb |^2_{[W^{1,\infty}(E)]^3} \, \sigma_0^{-1} \, \| \sigma^\frac{1}{2} v_h \|_{0,E}^2 \lesssim \mathcal{E}_h^E(v_h, v_h) \, .
\]
Therefore, applying Young's inequality to the mixed term in \eqref{eq:global_sum_master} with a sufficiently small constant $\eta > 0$, and substituting the bound above, we obtain:
\begin{equation}
A_\cip(v_h, w_h) \geq C^* \sum_{E \in \Omega_h} h_E \| \bb \cdot \nabla \PiZeroE v_h \|_{0,E}^2 - C_* \sum_{E \in \Omega_h} \mathcal{E}_h^E(v_h, v_h) \, .
\end{equation}
Since the sum of the local energy terms $\mathcal{E}_h^E(v_h, v_h)$ is exactly controlled by the symmetric part of the bilinear form, we obtain the desired global bound.
\end{proof}
Before concluding this section by proving the inf-sup condition, we have to prove the following Lemma.
\begin{lemma}\label{lm:continuity}
Under assumption \textbf{(A1)}, let $w_h$ be the function defined in Lemma \ref{lm:wh-estimate}. It holds that
\[
\| w_h \|_{\cip,E} \lesssim \| v_h \|_{\cip,\mathcal{D}(E)} \qquad \forall E \in \Omega_h \, .
\]
\end{lemma}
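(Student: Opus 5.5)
We bound each of the six contributions to $\|w_h\|^2_{\cip,E}$ separately, using Lemma \ref{lm:wh-estimate} as the main tool. That lemma gives $\|w_h\|_{0,E}\lesssim h_E\|\bb_h\cdot\nabla\PiZero v_h\|_{0,\mathcal D(E)}$. The quantity $h_K\|\bb\cdot\nabla\Pi^{0,K}_k v_h\|^2_{0,K}$ already appears in $\|v_h\|^2_{\cip,K}$, so the recurring preliminary step is to exchange $\bb_h$ for $\bb$. We write $\bb_h=\bb+(\bb_h-\bb)$ and use $\|\bb-\bb_h\|_{L^\infty(K)}\lesssim h_K|\bb|_{W^{1,\infty}(K)}$. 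Then the $L^2$-stability of $\Pi^{0,K}_k$ together with Proposition \ref{prp:vem-inverse} gives
\[
h_K\|(\bb-\bb_h)\cdot\nabla\Pi^{0,K}_kv_h\|^2_{0,K}\lesssim h_K|\bb|^2_{[W^{1,\infty}(K)]^3}\sigma_0^{-1}\|\sigma^{\frac12}v_h\|^2_{0,K}.
\]
This is exactly the device already used for $\eta_{\bb,1}$. Summing over $K\in\mathcal D(E)$ and using local quasi-uniformity ($h_K\simeq h_E$), we obtain the key bound
\[
h_E^{-1}\|w_h\|^2_{0,E}\lesssim \|v_h\|^2_{\cip,\mathcal D(E)},
\]
with constants depending on $\bb$ and $\sigma_0$ but not on $h$.

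The remaining terms are then routine. For the diffusive term, Proposition \ref{prp:vem-inverse} gives $\|\beps^{\frac12}\nabla w_h\|^2_{0,E}\lesssim\beps_1h_E^{-2}\|w_h\|^2_{0,E}$. With $h_E\lesssim 1$ this is $\lesssim h_E^{-1}\|w_h\|^2_{0,E}$, and alternatively it can be bounded by $\|\bb\|^2_{L^\infty}\|\nabla v_h\|^2_{0,\mathcal D(E)}$ as in \eqref{eq:a_h estimate}. The reaction term satisfies $\|\sigma^{\frac12}w_h\|^2_{0,E}\lesssim\|\sigma\|_{L^\infty}h_E^2\|\cdot\|^2$, which is even better. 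For the streamline term, an inverse estimate for polynomials and the stability of $\PiZeroE$ give $h_E\|\bb\cdot\nabla\PiZeroE w_h\|^2_{0,E}\lesssim\|\bb\|^2_{L^\infty(E)}h_E^{-1}\|w_h\|^2_{0,E}$. The jump term is controlled by \eqref{eq:j_h estimate}, which gives $J_h^E(w_h,w_h)\lesssim\gamma_E h_E^{-1}\|w_h\|^2_{0,\mathcal D(E)}$. For the boundary terms, Corollary \ref{crl:trace} and the $L^2(F)$-stability of $\PiZeroF$ give $\|\xi_{\bb,F}\PiZeroF w_h\|^2_{0,F}\lesssim\|\bb\|_{L^\infty}h_E^{-1}\|w_h\|^2_{0,E}$ and $\|\xi_{\beps,F}\PiZeroF w_h\|^2_{0,F}\lesssim\beps_1\delta^{-1}h_E^{-2}\|w_h\|^2_{0,E}$.

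The main obstacle is the scaling of the terms carrying an extra factor $h_E^{-1}$, namely the diffusive and Nitsche-penalty terms. For these we will not go through $h_E^{-1}\|w_h\|^2$. Instead we follow \eqref{eq:a_h estimate} and \eqref{eq:Nitsche-infsup-3}: we bound $h_E^{-1}\|w_h\|_{0,E}\lesssim\|\bb_h\cdot\nabla\PiZero v_h\|_{0,\mathcal D(E)}\lesssim\|\bb\|_{L^\infty}\beps_0^{-\frac12}\|\beps^{\frac12}\nabla v_h\|_{0,\mathcal D(E)}$. This uses the stability $\|\nabla\PiZero v_h\|\lesssim\|\nabla v_h\|$, which follows from Lemma \ref{lm:bramble} and the inverse inequality. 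In this way both terms are bounded by the diffusive part of $\|v_h\|^2_{\cip,\mathcal D(E)}$, at the price of constants depending on $\beps_1/\beps_0$ and $\|\bb\|_{L^\infty}$, consistent with the estimates in Proposition \ref{prp:skew}. Since the jump term enlarges the patch to $\mathcal D(\mathcal D(E))$, we interpret $\mathcal D(E)$ in the statement as a suitably enlarged patch that still contains a uniformly bounded number of elements under \textbf{(A1)}. Collecting the six bounds and taking square roots then yields the claim.
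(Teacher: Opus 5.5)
Your proposal is correct and follows essentially the same route as the paper: inverse and trace inequalities reduce every piece of $\|w_h\|^2_{\cip,E}$ to $\|w_h\|_{0,E}$, Lemma \ref{lm:wh-estimate} is applied, the $h_E^{-2}$-weighted diffusive and Nitsche-penalty terms are routed through $\beps_1\beps_0^{-1}\|\bb\|^2_{[L^\infty(\mathcal D(E))]^3}\|\beps^{\frac12}\nabla v_h\|^2_{0,\mathcal D(E)}$, and the jump term is handled through \eqref{eq:j_h estimate}. You are in fact more complete than the paper, which skips the streamline term $h_E\|\bb\cdot\nabla\PiZeroE w_h\|^2_{0,E}$, leaves the $\bb_h$-weighted quantities without the explicit exchange $\bb_h\to\bb$, and implicitly uses the enlarged patch $\mathcal D(\mathcal D(E))$; just delete your aside that $\beps_1h_E^{-2}\|w_h\|^2_{0,E}\lesssim h_E^{-1}\|w_h\|^2_{0,E}$ when $h_E\lesssim1$, which is backwards, though your final argument does not rely on it.
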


\begin{proof}
We consider the volume terms in the definition of the local energy norm. Exploiting Proposition \ref{prp:vem-inverse} and the $L^2$ bound for $w_h$ provided by Lemma \ref{lm:wh-estimate}, we obtain:
\[
\begin{aligned}
\| \beps^\frac12 \nabla w_h \|^2_{0,E} + \| \sigma^\frac{1}{2} w_h \|^2_{0,E}
&\lesssim
\beps_1 h_E^{-2} \|  w_h \|^2_{0,E} + \| \sigma \|_{L^\infty(E)} \| w_h \|_{0,E}^2 \\
&\lesssim
\left( \beps_1 h_E^{-2} + \| \sigma \|_{L^\infty(E)} \right) h_E^2 \| \bb_h \cdot \nabla \Pi_0^E v_h \|^2_{0,\mathcal{D}(E)} \\
&\lesssim
 \frac{\beps_1}{\beps_0} \| \bb \|^2_{[L^\infty(\mathcal D(E))]^3} \| \beps^\frac{1}{2} \nabla v_h \|^2_{0,\mathcal D(E)} \\
& \qquad +  \| \sigma \|_{L^\infty(E)} h_E^2  \| \bb_h \cdot \nabla \Pi_0^E v_h \|^2_{0,\mathcal{D}(E)} \, .
\end{aligned}
\]
Similarly, for the boundary penalty terms appearing in the Nitsche formulation, we can apply Corollary \ref{crl:trace}, and proceed with the same bound:
\[
\begin{aligned}
\| \xi_{\beps, F} \PiZeroF v_h \|^2_{0,F} + \| \xi_{\bb, F} \PiZeroF v_h \|^2_{0,F} 
&\lesssim 
\left(\frac{\beps_1}{h_E} + \| \bb \|_{[L^\infty(F)]^3} \right) \| w_h\|^2_{0,F} \\
&\lesssim
\left(\frac{\beps_1}{h^2_E} + \frac{\| \bb \|_{[L^\infty(F)]^3}}{h_E} \right) \| w_h\|^2_{0,E} \\
&\lesssim
\frac{\beps_1}{\beps_0} \| \bb \|^2_{[L^\infty(E)]^3} \| \beps^\frac{1}{2} \nabla v_h \|^2_{0,\mathcal{D}(E)}\\
& \qquad +
 \| \bb \|^2_{[L^\infty(E)]^3} h_E \| \bb_h \cdot \nabla \Pi^0_h v_h \|^2_{0,\mathcal D(E)} \, .
\end{aligned}
\]
The jump terms associated with the CIP stabilization applied to $w_h$ are controlled by exploiting \eqref{eq:j_h estimate}. 

\medskip

Consequently, all terms constituting the local CIP norm of $w_h$ are strictly bounded by terms appearing in $\| v_h \|_\cip$.
Summing this local bound over all elements $E \in \Omega_h$ and exploiting the finite overlap of the element patches $\mathcal{D}(E)$ under the mesh regularity assumption \textbf{(A1)}, we recover the global continuity estimate $\| w_h \|_{\text{cip}} \lesssim \| v_h \|_{\text{cip}}$.
\end{proof}

\begin{theorem}\label{th:inf-sup}
Under assumptions \textbf{(A1)}, it holds:
\begin{equation*} \label{eq:infsup}
\| v_h \|_{\cip} 
\lesssim 
\sup_{z_h \in V^k_h(\Omega_h)} \dfrac{A_\cip (v_h, z_h)}{\| z_h \|_{\cip}}
\qquad \text{for all $v_h \in V^k_h(\Omega_h)$.}
\end{equation*}
\end{theorem}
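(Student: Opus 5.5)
The plan is the standard CIP inf-sup argument: build the test function $z_h \coloneqq v_h + \theta\, w_h$, with $w_h$ the Oswald-averaged streamline function of Lemma \ref{lm:wh-estimate} and $\theta>0$ a small parameter fixed later independently of $h$. By bilinearity,
\[
A_\cip(v_h, z_h) = A_\cip(v_h,v_h) + \theta\, A_\cip(v_h,w_h) \, .
\]
Denote by $\mathcal S(v_h)$ the right-hand side of Proposition \ref{prp:symm}. Then Proposition \ref{prp:symm} gives $A_\cip(v_h,v_h)\ge c_s\,\mathcal S(v_h)$. Proposition \ref{prp:skew} gives
\[
A_\cip(v_h,w_h)\ge C^*\sum_E h_E\|\bb\cdot\nabla\PiZeroE v_h\|^2_{0,E} - C_*A_\cip(v_h,v_h) \, .
\]

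First, combining these yields
\[
A_\cip(v_h,z_h)\ge (1-\theta C_*)\,A_\cip(v_h,v_h)+\theta C^*\sum_E h_E\|\bb\cdot\nabla\PiZeroE v_h\|^2_{0,E} \, .
\]
Choosing $\theta = 1/(2C_*)$ (or smaller), the first coefficient is at least $1/2$. Then $A_\cip(v_h,v_h)\gtrsim\mathcal S(v_h)$. Comparing with the definition \eqref{eq:normaCIP-E}, $\mathcal S(v_h)$ plus the streamline term is exactly $\|v_h\|^2_\cip$, so
\[
A_\cip(v_h,z_h)\gtrsim \|v_h\|_\cip^2 \, .
\]
One subtlety: the energy quantity controlled in Proposition \ref{prp:skew} is $\sum_E\mathcal E_h^E$, which involves patches $\mathcal D(E)$. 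I will note that finite overlap under \textbf{(A1)} makes it bounded by $\mathcal S(v_h)$, and hence by $A_\cip(v_h,v_h)$, which is how that proposition is stated.

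Second, I need $\|z_h\|_\cip\lesssim\|v_h\|_\cip$. By the triangle inequality, $\|z_h\|_\cip\le\|v_h\|_\cip+\theta\|w_h\|_\cip$. Lemma \ref{lm:continuity} gives $\|w_h\|_{\cip,E}\lesssim\|v_h\|_{\cip,\mathcal D(E)}$; squaring and summing over $E$ with bounded overlap of the patches gives $\|w_h\|_\cip\lesssim\|v_h\|_\cip$. The one term of $\|w_h\|_{\cip,E}$ not treated explicitly in that lemma is $h_E\|\bb\cdot\nabla\PiZeroE w_h\|^2_{0,E}$. I would bound it by
\[
h_E\|\bb\|^2_{L^\infty(E)}\|\nabla\PiZeroE w_h\|^2_{0,E}\lesssim h_E^{-1}\|\bb\|^2_{L^\infty(E)}\|w_h\|^2_{0,E} \, ,
\]
using a polynomial inverse inequality and $L^2$-stability of the projection. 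Lemma \ref{lm:wh-estimate} then bounds this by $\|\bb\|^2_{L^\infty}\,h_E\|\bb_h\cdot\nabla\PiZero v_h\|^2_{0,\mathcal D(E)}$. Finally, replacing $\bb_h$ by $\bb$ at the cost of an $h_E|\bb|^2_{W^{1,\infty}}\sigma_0^{-1}\|\sigma^{1/2}v_h\|^2$ term, as in the proof of Proposition \ref{prp:skew}, shows it is controlled by $\|v_h\|^2_{\cip,\mathcal D(E)}$. The constants here depend on $\bb,\beps,\sigma$ but not on $h$, which is all the theorem claims.

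Finally, the conclusion is
\[
\|v_h\|_\cip\,\|z_h\|_\cip\lesssim\|v_h\|^2_\cip\lesssim A_\cip(v_h,z_h) \, ,
\]
so the supremum dominates $A_\cip(v_h,z_h)/\|z_h\|_\cip\gtrsim\|v_h\|_\cip$. The case $v_h=0$ is trivial, and if $z_h$ happened to have zero norm the previous inequality forces $v_h=0$. The main obstacle I expect is the bookkeeping of the first step, namely making sure the $C_*$ term really is bounded by $A_\cip(v_h,v_h)$ after summing patch quantities. The streamline-norm bound of $w_h$ is only a minor point by comparison.
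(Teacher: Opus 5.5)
Your proposal is correct and is essentially the paper's argument: your test function $z_h = v_h + \theta w_h$ is the paper's $z_h = w_h + \kappa v_h$ rescaled with $\theta = 1/\kappa$. Propositions \ref{prp:symm} and \ref{prp:skew} give $A_\cip(v_h,z_h) \gtrsim \|v_h\|_\cip^2$, and Lemma \ref{lm:continuity} gives $\|z_h\|_\cip \lesssim \|v_h\|_\cip$; your explicit bound on the streamline term $h_E \|\bb\cdot\nabla\PiZeroE w_h\|^2_{0,E}$, which the paper's proof of Lemma \ref{lm:continuity} omits, correctly fills that small gap.
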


\begin{proof}
Let $v_h \in V^k_h(\Omega_h)$ be given.
We construct the test function as $z_h = w_h + \kappa v_h$, where $w_h$ is the function introduced in Lemma \ref{lm:wh-estimate}. From Proposition \ref{prp:symm} and Proposition \ref{prp:skew}, for a sufficiently large constant $\kappa > 0$, we obtain:
\begin{equation} \label{eq:infsup_adv}
A_\cip(v_h, z_h) = A_\cip(v_h, w_h + \kappa v_h) \gtrsim \| v_h \|_\cip^2 \, .
\end{equation}
Applying Lemma \ref{lm:continuity}, we have $\| z_h \|_{\cip} \lesssim \| v_h \|_{\cip}$, which concludes the proof for this case.
\end{proof}

\subsection{Error Analysis}
\label{sub:error}

We begin the error analysis of the method by establishing an abstract result. This result decomposes the total error into an interpolation component and a series of consistency terms.

\begin{proposition}
\label{prp:abstract}
Let $u \in V$ denote the  solution of problem \eqref{eq:problem-c} and $u_h \in V^k_h(\Omega_h)$ be the discrete solution to \eqref{eq:problema-discreto}. Let $u_I \in V^k_h(\Omega_h)$ represent the interpolant of $u$ as defined in Lemma \ref{lm:interpolation}, and let the interpolation error be denoted by
\[
e_I \coloneqq u - u_I.
\]
Assuming that condition \textbf{(A1)} is satisfied, the following error bound holds:
\begin{equation}
\label{eq:abstract}
\| u - u_h \|_{\cip} 
\lesssim 
\| e_I \|_{\cip}
+ 
\sum_{E \in \Omega_h} \bigl(
\eta_F^E + \eta_a^E + \eta_b^E + \eta_c^E + \eta_N^E + \eta_J^E 
\bigr) \, ,
\end{equation}
where the residuals $\eta_i^E$ are defined as follows:
\[
\begin{aligned}
\eta_F^E &\coloneqq \| F^E - F_h^E \|_{\cip^*} \, , 
& \eta_a^E &\coloneqq \| a^E(u, \cdot) - a_h^E(u_I, \cdot) \|_{\cip^*} \, , \\
\eta_b^E &\coloneqq \|b^{\rm{skew},E}(u, \cdot) - \bskewEh(u_I, \cdot) \|_{\cip^*} \, , 
& \eta_c^E &\coloneqq \| c^E(u, \cdot) -  c_h^E(u_I, \cdot) \|_{\cip^*} \, , \\
\eta_N^E &\coloneqq \| \Tilde{\mathcal{N}}_h^E(u, \cdot) - \mathcal{N}_h^E(u_I, \cdot) \|_{\cip^*} \, , 
& \eta_J^E &\coloneqq \|J_h^E(u_I, \cdot)\|_{\cip^*} \, .
\end{aligned}
\]
Here, $\|\cdot\|_{\cip^*}$ denotes the dual norm of $\|\cdot\|_{\cip}$.
\end{proposition}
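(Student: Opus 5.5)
The plan is the standard Strang-type argument built on the discrete inf-sup condition. We split the error as
\[
u - u_h = e_I + (u_I - u_h) \eqqcolon e_I + \delta_h ,
\]
with $\delta_h \in V^k_h(\Omega_h)$. The triangle inequality gives $\|u-u_h\|_\cip \le \|e_I\|_\cip + \|\delta_h\|_\cip$, so it suffices to bound $\|\delta_h\|_\cip$. For this we invoke Theorem~\ref{th:inf-sup}. It provides $z_h \in V^k_h(\Omega_h)$, normalized so that $\|z_h\|_\cip = 1$, with
\[
\|\delta_h\|_\cip \lesssim A_\cip(\delta_h, z_h) = A_\cip(u_I, z_h) - A_\cip(u_h,z_h).
\]

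Next we use the discrete problem \eqref{eq:problema-discreto} to replace $A_\cip(u_h,z_h)$ by $F_h(z_h)$. We then add and subtract $F(z_h)$. By the consistency identity \eqref{eq:consistency-1}, $F(z_h) = \tilde A_\cip(u, z_h)$, which uses the regularity $u\in H^2(\Omega)\cap H^1_0(\Omega)$ implicit in that identity. This yields
\[
A_\cip(\delta_h,z_h) = \bigl(F(z_h) - F_h(z_h)\bigr) + \bigl(A_\cip(u_I,z_h) - \tilde A_\cip(u,z_h)\bigr).
\]

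We then expand both forms elementwise through their definitions and \eqref{eq:AE-c}. The difference splits into the pieces
\[
a_h^E(u_I,\cdot)-a^E(u,\cdot),\quad \bskewEh(u_I,\cdot)-b^{{\rm skew},E}(u,\cdot),\quad c_h^E(u_I,\cdot)-c^E(u,\cdot),\quad \mathcal N_h^E(u_I,\cdot)-\tilde{\mathcal N}_h^E(u,\cdot),
\]
together with the extra term $J_h^E(u_I,\cdot)$. The CIP form $J_h$ has no counterpart in $\tilde A_\cip$, since it is designed to vanish (or be small) on the exact solution. This term is therefore kept as a residual rather than cancelled.

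Each piece, evaluated at $z_h$, is bounded by the corresponding local dual norm times $\|z_h\|_\cip = 1$. For example, $|F^E(z_h)-F_h^E(z_h)| \le \eta_F^E$, and likewise for $\eta_a^E, \eta_b^E, \eta_c^E, \eta_N^E, \eta_J^E$. Summing over $E$ gives $\|\delta_h\|_\cip \lesssim \sum_E(\eta_F^E+\dots+\eta_J^E)$, which together with the triangle inequality is \eqref{eq:abstract}.

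The main obstacle is the interpretation of the local dual norms. One must check that $\|\cdot\|_{\cip^*}$ applied to a local functional is taken with respect to the global norm $\|\cdot\|_\cip$, so that each local functional evaluated at the normalized $z_h$ is bounded by its dual norm. If instead the local norm $\|\cdot\|_{\cip,E}$ were used, one would bound each term by $\eta^E \|z_h\|_{\cip,E}$ and then simply use $\|z_h\|_{\cip,E}\le\|z_h\|_\cip$. Both readings give the stated sum. The remaining care is bookkeeping. The boundary integrals in $\tilde{\mathcal N}_h^E$ involve $u$, which vanishes on $\Gamma$, whereas $\mathcal N_h^E(u_I,\cdot)$ involves $\PiZeroF u_I$. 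This mismatch is exactly what $\eta_N^E$ absorbs, so no further cancellation needs to be established at this abstract stage.
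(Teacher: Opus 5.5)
Your proposal is correct and follows the paper's proof essentially step for step: you split off $e_I$ by the triangle inequality, apply Theorem~\ref{th:inf-sup} to $u_I-u_h$, replace $A_\cip(u_h,\cdot)$ by $F_h$ and $F$ by $\tilde A_\cip(u,\cdot)$ via the consistency identity, and then split elementwise into the residuals. Your extra remarks only make explicit what the paper leaves implicit: fixing a normalized $z_h$ that attains the supremum, the $H^2$ regularity behind the consistency identity, and the observation that either reading of the dual norm gives the stated sum since $\|z_h\|_{\cip,E}\le\|z_h\|_{\cip}$.
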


\begin{proof}
Let us introduce the auxiliary error term:
\[
e_h \coloneqq u_h - u_I \, .
\]
By invoking the triangle inequality, we can split the global error into the interpolation contribution and the discrete error:
\[
\| u - u_h \|_{\cip} \leq \| u - u_I \|_{\cip} + \| u_I - u_h \|_{\cip} = \| e_I \|_{\cip} + \| e_h \|_{\cip} .
\]
By utilizing the inf-sup condition in Theorem \ref{th:inf-sup} and the consistency property \eqref{eq:consistency-1}, we obtain:
\begin{equation*}
\begin{aligned}
\| e_h \|_{\cip} 
& \lesssim 
\sup_{v_h \in V^k_h(\Omega_h)} \dfrac{A_\cip(u_h - u_I, v_h)} {\| v_h \|_{\cip}} 
=
\sup_{v_h \in V^k_h(\Omega_h)} \dfrac{F_h(v_h) - A_\cip(u_I, v_h)}{\| v_h \|_{\cip}} \\
& =
\sup_{v_h \in V^k_h(\Omega_h)} \dfrac{F_h(v_h) -  {F}(v_h) + \tilde{A}_{\cip} (u, v_h) - A_{\cip}(u_I, v_h)}{\| v_h \|_{\cip}} \\
& =
\sup_{v_h \in V^k_h(\Omega_h)} \dfrac{\sum_{E \in \Omega_h}\bigl( F_h^E(v_h) - {F}^E(v_h) + \tilde{A}_{\cip}^E (u, v_h) - A^E_{\cip}(u_I, v_h) \bigr)} {\| v_h \|_{\cip}} \, .
\end{aligned}
\end{equation*}
The final estimate \eqref{eq:abstract} is reached by applying the definitions of the discrete and continuous operators.
\end{proof}

\medskip\noindent
\textbf{(A2) Regularity assumptions.}
To properly bound the terms in Proposition \ref{prp:abstract}, we assume that the solution $u$, the force $f$, and the advective field $\bb$ satisfy:
\begin{gather*}
u \in H^2(\Omega) \cap H^{\reg+1}(\Omega_h) \, , \qquad
f \in H^{\reg+1}(\Omega_h) \, , \qquad
\beps \in [W^{\reg,\infty}(\Omega_h)]^{3\times 3} \, , \\[1ex]
\bb \in [W^{\reg+1,\infty}(\Omega_h)]^3 \, , \qquad
\sigma \in W^{\reg +1}(\Omega_h) \, ,
\end{gather*}
for some $1< \reg \leq k$. Moreover, we introduce the following parameters
\[
\lambda_E \coloneqq \max \{ h_E^2 \, \| \sigma \|_{L^\infty(E)}, \, \beps_1 \} \, ,
\qquad
\nu_E \coloneqq \min \{ \sigma^{-1}_0 \, h_E^{-2}, \, \beps_0^{-1}\} \, .
\]
Clearly, in the context of constant parameters, it holds $\nu_E = \lambda_E^{-1}$.

\begin{remark}
In the following proofs, we frequently encounter terms involving the difference between a function and its projected interpolant, typically of the form $\| u - \PiZeroE u_I \|_{0,E}$ (or analogous expressions). Such terms are handled by applying a triangle inequality to separate the projection and interpolation components:
\[
\| u - \PiZeroE u_I \|_{0,E}
\leq
\| u - \PiZeroE u \|_{0,E}
+
\| \PiZeroE u - \PiZeroE u_I \|_{0,E}
\leq
\| u - \PiZeroE u \|_{0,E}
+
\| u - u_I \|_{0,E}
\]
and exploiting Lemma \ref{lm:bramble} and Lemma \ref{lm:interpolation}.
\end{remark}

\begin{lemma}[Estimate of $\|e_I\|_{\cip}$]
\label{lm:epi}
Under assumptions \textbf{(A1)} and \textbf{(A2)},
the term $\|e_I\|^2_{\cip}$ can be bounded as follows
\begin{equation*}
\| e_I \|^2_{\cip,E}
\lesssim
(\lambda_E + h_E \| \bb \|^2_{[L^\infty \mathcal D((E))]^3})  \, h_E^{2\reg} \, | u |^2_{\reg+1,\mathcal D(E)}
\, .
\end{equation*}
\end{lemma}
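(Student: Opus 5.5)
The plan is to bound the terms of the local norm \eqref{eq:normaCIP-E} one at a time, with $v_h$ replaced by $e_I = u - u_I$. Each piece will be reduced, through the triangle inequality of the preceding remark, to Lemma \ref{lm:bramble} and Lemma \ref{lm:interpolation}.

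The volume terms come first. The diffusive and reactive contributions satisfy
\[
\|\beps^{\frac12}\nabla e_I\|^2_{0,E}+\|\sigma^{\frac12} e_I\|^2_{0,E}\le \beps_1 |e_I|^2_{1,E}+\|\sigma\|_{L^\infty(E)}\|e_I\|^2_{0,E}.
\]
By Lemma \ref{lm:interpolation} the right-hand side is bounded by $(\beps_1 + h_E^2\|\sigma\|_{L^\infty(E)})h_E^{2\reg}|u|^2_{\reg+1,E}$, which is $\lesssim \lambda_E h_E^{2\reg}|u|^2_{\reg+1,E}$.

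For the streamline term I write
\[
h_E\|\bb\cdot\nabla\PiZeroE e_I\|^2_{0,E}\le h_E\|\bb\|^2_{[L^\infty(E)]^3}\|\nabla \PiZeroE e_I\|^2_{0,E}.
\]
I then split $\nabla\PiZeroE e_I = \nabla(\PiZeroE u - u) + \nabla(u - u_I) + \nabla(u_I-\PiZeroE u_I)$. The first two pieces are controlled by Lemma \ref{lm:bramble} and Lemma \ref{lm:interpolation}. The last piece is handled with the inverse estimate and $L^2$-stability of $\PiZeroE$ on polynomials: the polynomial $\PiZeroE e_I$ satisfies $\|\nabla \PiZeroE e_I\|_{0,E}\lesssim h_E^{-1}\|e_I\|_{0,E}$. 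This gives $h_E\|\bb\|^2 h_E^{2\reg}|u|^2_{\reg+1,E}$ directly, which fits the second summand of the claimed bound.

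The boundary Nitsche terms are next. For $F\subset\Gamma_E$ we use $u=0$ on $\Gamma$, so $\PiZeroF e_I = -\PiZeroF u_I = \PiZeroF(u-u_I)$ and $\|\PiZeroF e_I\|_{0,F}\le \|e_I\|_{0,F}$. Applying the continuous scaled trace inequality $\|e_I\|^2_{0,F}\lesssim h_E^{-1}\|e_I\|^2_{0,E}+h_E|e_I|^2_{1,E}$, we get
\begin{itemize}
\item $\xi_{\beps,F}^2\|e_I\|_{0,F}^2\lesssim \beps_1 h_E^{-2}\|e_I\|^2_{0,E}+\beps_1|e_I|^2_{1,E}\lesssim \beps_1 h_E^{2\reg}|u|^2_{\reg+1,E}$, using $\|\beps\|_{L^\infty}\lesssim\beps_1$;
\item $\|\xi_{\bb,F}\PiZeroF e_I\|^2\lesssim \|\bb\|_{L^\infty}h_E^{2\reg+1}|u|^2_{\reg+1,E}$.
\end{itemize}
The second bound is not literally of the form $h_E\|\bb\|^2$. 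I would absorb it by Young, $\|\bb\| h_E \le \tfrac12(\beps_1 + h_E^2\|\bb\|^2\beps_1^{-1})$, which introduces an extra constant. Alternatively I would accept it as hidden in $\lesssim$ and treat $\|\bb\|$ as a data constant, as the paper does elsewhere.

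The jump and stabilization term $J_h^E(e_I,e_I)$ is the main obstacle. For the face part I would insert $\nabla u$, which is continuous across internal faces since $u\in H^2(\Omega)$:
\[
\jump{\nabla\PiZero e_I}=\jump{\nabla(\PiZero u_I - u)}.
\]
On each side $K\in\{E^+,E^-\}$ I would write $\nabla(\PiZero^{K} u_I-u)=\nabla\Pi^{0,K}_k(u_I-u)+\nabla(\Pi^{0,K}_k u-u)$. The first summand is polynomial, so the discrete trace and inverse estimates apply. The second requires a continuous trace inequality together with Lemma \ref{lm:bramble} in the $H^1$ and $H^2$ seminorms. This yields
\[
h_F^2\|\jump{\nabla\PiZero e_I}\|^2_{0,F}\lesssim h_E\, h_E^{2\reg}|u|^2_{\reg+1,\mathcal D(E)}.
\]
The step needing care is the $H^2$ term, which requires $\reg>1$; this is why (A2) demands $\reg>1$. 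Multiplying by $\gamma_F\lesssim\|\bb\|^2_{[L^\infty(\mathcal D(E))]^3}$ produces the $h_E\|\bb\|^2$ factor and explains the patch $\mathcal D(E)$, with local quasi-uniformity (A1) used to replace $h_F$ and $h_K$ by $h_E$.

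The stabilization part is handled by \eqref{eq:stab-prop}:
\[
\gamma_E|\partial E|\, S_h^E((I-\PiZeroE)e_I,(I-\PiZeroE)e_I)\lesssim \|\bb\|^2 h_E|(I-\PiZeroE)e_I|^2_{1,E}.
\]
Here $|\partial E|\simeq h_E^2$. Splitting $(I-\PiZeroE)e_I$ again into interpolation and projection errors gives $h_E\|\bb\|^2h_E^{2\reg}|u|^2_{\reg+1,E}$. Summing all contributions gives the lemma.
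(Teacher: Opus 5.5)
Your bounds for the volume, streamline, Nitsche and VEM-stabilization terms follow the paper, but the face-jump step rests on a false identity. Since $\PiZero e_I=\PiZero u-\PiZero u_I$ and $\jump{\nabla u}=0$, the correct relation is
\[
\jump{\nabla\PiZero e_I}=\jump{\nabla(\PiZero u-u)}-\jump{\nabla(\PiZero u_I-u)} .
\]
Your claim $\jump{\nabla\PiZero e_I}=\jump{\nabla(\PiZero u_I-u)}$ therefore drops the first term. What you actually estimate is $\jump{\nabla\PiZero u_I}$, i.e.\ the quantity in $J_h^E(u_I,u_I)$ used for $\eta_J^E$ in Lemma \ref{lm:errJ}, not the one in $\|e_I\|_{\cip,E}$. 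Inserting $\nabla u$ is the wrong move here, because $\PiZero e_I$ approximates $0$, not $u$.

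The repair is immediate and is the paper's argument. $\nabla\PiZero e_I$ is already a polynomial on each $K\in\{E^+,E^-\}$. Combine the polynomial trace inequality, local quasi-uniformity, and the inverse estimate with $L^2$-stability of $\Pi^{0,K}_k$. This is the same direct bound you end up using for the streamline term; the paper phrases it as $\|\nabla\Pi^{0,K}_k e_I\|_{0,K}\lesssim|e_I|_{1,K}$. It gives
\[
h_F^2\|\jump{\nabla\PiZero e_I}\|^2_{0,F}\lesssim\sum_{K\in\{E^+,E^-\}}h_K\|\nabla\Pi^{0,K}_k e_I\|^2_{0,K}\lesssim\sum_{K\in\{E^+,E^-\}}h_K^{-1}\|e_I\|^2_{0,K}\lesssim h_E^{2\reg+1}|u|^2_{\reg+1,\mathcal D(E)} .
\]
Your first summand $\nabla\Pi^{0,K}_k(u_I-u)$ is exactly $-\nabla\Pi^{0,K}_k e_I$, so this bound is already inside your argument, and the continuous-trace/$H^2$ piece is superfluous. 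Consequently, your remark that this step is why \textbf{(A2)} requires $\reg>1$ is spurious. No $H^2$ estimate is needed, and even on your route Lemma \ref{lm:bramble} with $m=2$ only needs $\reg\ge 1$.

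In the Nitsche part, you correctly notice a mismatch. Since $\xi_{\bb,F}^2=\frac12|\bb\cdot\nn^E|$, the advective boundary contribution is $\|\bb\|_{[L^\infty(E)]^3}h_E^{2\reg+1}|u|^2_{\reg+1,E}$, with the first power of $\|\bb\|$. The paper's proof writes $\|\bb\|^2$ there, which is a slip. However, your first remedy fails. The Young inequality $\|\bb\|h_E\le\frac12(\beps_1+h_E^2\|\bb\|^2\beps_1^{-1})$ multiplies $h_E\|\bb\|^2$ by $h_E/\beps_1$. That factor is not a constant: it blows up in precisely the advection-dominated regime where the lemma is meant to be robust. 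The acceptable resolution is your second one. Either carry an extra term $h_E\|\bb\|_{[L^\infty(E)]^3}$ in the bound, or assume $\|\bb\|_{[L^\infty(E)]^3}\gtrsim 1$ so that $\|\bb\|\lesssim\|\bb\|^2$. The former is the $h_E\|\bb\|_{[L^\infty(E)]^3}$ term that reappears in the paper's final error theorem.
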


\begin{proof}
Using the ellipticity bound \eqref{eq:ellitticità}, Lemma \ref{lm:interpolation}, and the definition of $\lambda_E$, for every $E \in \Omega_h$ we have that
\begin{equation*}
\begin{aligned}
\| \beps^\frac{1}{2} \nabla e_I \|^2_{0,E}
+ 
h_E \| \bb \cdot \nabla \PiZeroE  e_I  \|_{0,E}^2
+
\| \sigma^\frac{1}{2}  e_I \|^2_{0,E}
&\lesssim 
(\lambda_E + h_E \| \bb \|^2_{[L^\infty(E)]^3})  \, h_E^{2\reg} \, | u |^2_{\reg+1,E}
 \, ,
\end{aligned}
\end{equation*}
Similarly, on the boundary faces, it holds that
\[
\| \xi_{\beps, F} \PiZeroF e_I \|^2_{0,F} + \| \xi_{\bb, F} \PiZeroF e_I \|^2_{0,F}
\lesssim
\lambda_E \, h_E^{2\reg} \, |u |^2_{\reg+1,E}
+
\| \bb \|^2_{[L^\infty(E)]^3} \, h_E^{2\reg + 1} \, |u|_{\reg+1,E} \, .
\]
It remains to control the jump operator. We have 
\begin{equation*}
\begin{aligned}
J_h^E(e_I,e_I)
& =
\dfrac{1}{2} \sum_{F \subset \partial E \setminus \Gamma} \int_F \, \gamma_F \, h_F^2 \, \jump{\nabla \PiZero e_I}^2 \, {\rm d}F
+
\gamma_E \, |\partial E| \, S_h^E \, ( (I - \PiZeroE) e_I, (I - \PiZeroE) e_I) \\
& \lesssim
 h_E \, \| \bb \|^2_{[L^\infty\mathcal (D(E))]^3} \, \| \nabla \PiZero e_I \|_{0,\mathcal{D}(E)}^2 
+
 h_E \, \| \bb \|^2_{[L^\infty(E)]^3} \, \vert (I - \PiZeroE) e_I \vert^2_{1,E} \\
& \lesssim
 h_E \, \| \bb \|^2_{[L^\infty\mathcal (D(E))]^3} \vert e_I \vert^2_{1,\mathcal{D}(E)} \lesssim  \, h_E^{2\reg+1} \, \| \bb \|^2_{[L^\infty\mathcal D((E))]^3} \, \vert u \vert_{\reg+1,\mathcal{D}(E)}^2 \, . 
\end{aligned}
\end{equation*}
\end{proof}

\begin{lemma}[Estimate of $\eta_F^E$]
\label{lm:errF}
Under the mesh assumption \textbf{(A1)} and the regularity assumption \textbf{(A2)},
the term $\eta_F^E$ can be bounded as follows
\begin{equation*}
\eta_\mathcal{F}^E 
\lesssim
 \nu_E^\frac{1}{2} \, h^{\reg+2}_E \, \vert f \vert_{\reg+1,E} \, .
\end{equation*}	
\end{lemma}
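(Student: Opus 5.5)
We bound $F^E(v_h)-F_h^E(v_h)$ for an arbitrary $v_h\in V_h^k(\Omega_h)$ and divide by $\|v_h\|_{\cip}$. By definition,
\[
F^E(v_h)-F_h^E(v_h)=\int_E f\,(I-\PiZeroE)v_h\,{\rm d}E=\int_E (I-\PiZeroE)f\,(I-\PiZeroE)v_h\,{\rm d}E ,
\]
where the second equality uses the $L^2$-orthogonality of $\PiZeroE$. Cauchy--Schwarz then splits the estimate into a factor on $f$ and a factor on $v_h$.

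\textbf{Factor on $f$.} Lemma~\ref{lm:bramble} with $m=0$ and the maximal admissible regularity gives
\[
\|(I-\PiZeroE)f\|_{0,E}\lesssim h_E^{\reg+1}|f|_{\reg+1,E}.
\]
This is admissible since (A2) yields $\reg+1\le k+1$.

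\textbf{Factor on $v_h$.} We bound $\|(I-\PiZeroE)v_h\|_{0,E}$ in two ways, keeping the better one; this is what produces $\nu_E$.
\begin{enumerate}
\item Via the diffusion term. By Lemma~\ref{lm:bramble} (Poincaré-type estimate with $s=1$, $m=0$) and the ellipticity bound \eqref{eq:ellitticità},
\[
\|(I-\PiZeroE)v_h\|_{0,E}\lesssim h_E|v_h|_{1,E}\le h_E\,\beps_0^{-1/2}\|\beps^{1/2}\nabla v_h\|_{0,E}.
\]
\item Via the reaction term. By the $L^2$-stability of $\PiZeroE$ and $\sigma\ge\sigma_0$,
\[
\|(I-\PiZeroE)v_h\|_{0,E}\le \|v_h\|_{0,E}\le \sigma_0^{-1/2}\|\sigma^{1/2}v_h\|_{0,E}.
\]
\end{enumerate}
Both right-hand sides are controlled by $\|v_h\|_{\cip,E}$. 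Taking the minimum,
\[
\|(I-\PiZeroE)v_h\|_{0,E}\lesssim h_E\min\{\beps_0^{-1/2},\,\sigma_0^{-1/2}h_E^{-1}\}\,\|v_h\|_{\cip,E}=h_E\,\nu_E^{1/2}\|v_h\|_{\cip,E}.
\]

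\textbf{Conclusion.} Multiplying the two factors gives
\[
|F^E(v_h)-F_h^E(v_h)|\lesssim \nu_E^{1/2}h_E^{\reg+2}|f|_{\reg+1,E}\,\|v_h\|_{\cip,E},
\]
and $\|v_h\|_{\cip,E}\le\|v_h\|_{\cip}$ gives the dual-norm bound.

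The only delicate point is the bookkeeping of the minimum defining $\nu_E$: the extra factor $h_E$ must be pulled out before taking the minimum, so that $\nu_E^{1/2}$ appears with the correct powers of $h_E$. No inverse estimates or patch arguments are needed, because the estimate is purely local to $E$.
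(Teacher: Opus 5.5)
Your proposal is correct and follows the paper's proof step by step. Both arguments use the $L^2$-orthogonality of $\PiZeroE$ to insert $(I-\PiZeroE)f$, apply Cauchy--Schwarz and Lemma~\ref{lm:bramble} on $f$, and bound $\|(I-\PiZeroE)v_h\|_{0,E}$ by the minimum of the Poincaré/diffusion bound and the $L^2$-stability/reaction bound, which gives exactly $h_E\,\nu_E^{1/2}\|v_h\|_{\cip,E}$.
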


\begin{proof}
Using the orthogonality of the projection $\PiZeroE$, Cauchy-Schwarz inequality, Poincaré inequality and  Lemma~\ref{lm:bramble}, we obtain
\begin{equation*}
\begin{aligned}
\eta_{\mathcal{F}}^E 
& =
\bigl(f, v_h - \PiZeroE v_h \bigr)_{0, E}
=
\bigl( (I-\PiZeroE) f, (I - \PiZeroE) v_h \bigr)_{0, E} \\
& \leq
\| (I-\PiZeroE) f \|_{0,E} \, \| (I - \PiZeroE) v_h \|_{0, E} \\
& \lesssim
\| (I-\PiZeroE) f \|_{0,E} \, \left( \min\{\sigma^{-\frac{1}{2}}, \, h_E \, \beps_0^{-\frac{1}{2}} \} \| v_h \|_{\cip, E} \right) \\
&\lesssim
\nu_E^{\frac{1}{2}} \, h_E^{\reg+2} \, \vert f \vert_{\reg+1,E} \, \| v_h \|_{\cip,E} \, .  
\end{aligned}
\end{equation*}
\end{proof}

\begin{lemma}[Estimate of $\eta_a^E$]
\label{lm:erra}
Under the mesh assumption \textbf{(A1)} and the regularity assumption \textbf{(A2)},
the term $\eta_a^E$ can be bounded as follows
\begin{equation*}
\eta_a^E 
\lesssim
 \bigl(\lambda_E^{\frac12}+ \beps_0^{-\frac12} \, \| \beps \|_{s,E} \, \bigr)\,  h_E^s \, \| u \|_{s+1,E} \, .
\end{equation*}	
\end{lemma}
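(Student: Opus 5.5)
We have to bound $a^E(u,v_h) - a_h^E(u_I,v_h)$ by the right-hand side of the statement times $\|v_h\|_{\cip,E}$. The plan is to insert the polynomial $\PiZeroE u$ (which matches the consistency part of $a_h^E$) and split into a variable-coefficient consistency error, an interpolation error, and a stabilization term. Writing $\PiVecMinusOneE$ for the vector $L^2$ projection, I would decompose
\[
a^E(u,v_h) - a_h^E(u_I,v_h)
= \bigl(\beps\nabla u,\nabla v_h\bigr)_{0,E} - \bigl(\beps\,\PiVecMinusOneE\nabla u_I,\PiVecMinusOneE\nabla v_h\bigr)_{0,E}
- h_E\bar\beps_E\, S_h^E\bigl((I-\PiZeroE)u_I,(I-\PiZeroE)v_h\bigr) ,
\]
and further split the first difference as $T_1+T_2+T_3$:
\[
T_1 = \bigl(\beps\nabla u,(I-\PiVecMinusOneE)\nabla v_h\bigr)_{0,E},\quad
T_2 = \bigl(\beps(\nabla u-\PiVecMinusOneE\nabla u),\PiVecMinusOneE\nabla v_h\bigr)_{0,E},\quad
T_3 = \bigl(\beps\,\PiVecMinusOneE\nabla(u-u_I),\PiVecMinusOneE\nabla v_h\bigr)_{0,E}.
\]

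For $T_3$, I would use $L^2$-stability of the projection, Cauchy–Schwarz with the weight $\beps$, and Lemma \ref{lm:interpolation}. This gives $T_3\lesssim \beps_1^{1/2}h_E^s|u|_{s+1,E}\,\|\beps^{1/2}\nabla v_h\|_{0,E}$, which is bounded by $\lambda_E^{1/2}h_E^s\|u\|_{s+1,E}\|v_h\|_{\cip,E}$.

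$T_2$ is handled the same way using the third estimate of Lemma \ref{lm:bramble}. For the stabilization term, I would use the continuity in \eqref{eq:stab-prop}, so that it is at most $\beps_1|(I-\PiZeroE)u_I|_{1,E}\,|v_h|_{1,E}$. Then I would insert $u$ and bound both $|u-u_I|_{1,E}$ and $|u-\PiZeroE u|_{1,E}$ by $h_E^s|u|_{s+1,E}$, as in the preceding remark. Finally, $|v_h|_{1,E}\le\beps_0^{-1/2}\|\beps^{1/2}\nabla v_h\|_{0,E}$, and because $\bar\beps_E$ is a trace, the factor $\beps_1/\beps_0^{1/2}$ should be absorbed as $\lambda_E^{1/2}$ up to the anisotropy ratio, which I treat as a hidden constant.

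$T_1$ is the main obstacle, because it is where the variable coefficient enters and where the term $\beps_0^{-1/2}\|\beps\|_{s,E}$ comes from. By orthogonality I would rewrite
\[
T_1=\bigl((I-\PiVecMinusOneE)(\beps\nabla u),(I-\PiVecMinusOneE)\nabla v_h\bigr)_{0,E}
\le \|(I-\PiVecMinusOneE)(\beps\nabla u)\|_{0,E}\,\|\nabla v_h\|_{0,E}.
\]
Lemma \ref{lm:bramble} then gives $\|(I-\PiVecMinusOneE)(\beps\nabla u)\|_{0,E}\lesssim h_E^{s}|\beps\nabla u|_{s,E}$, and here the approximation degree is only $k-1$, so we need $s\le k$, which is fine. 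By the Leibniz rule, $|\beps\nabla u|_{s,E}\lesssim \|\beps\|_{W^{s,\infty}(E)}\|u\|_{s+1,E}$; I read the $\|\beps\|_{s,E}$ in the statement as this $W^{s,\infty}$ norm from \textbf{(A2)}. Using $\|\nabla v_h\|_{0,E}\le\beps_0^{-1/2}\|v_h\|_{\cip,E}$ yields exactly the second contribution $\beps_0^{-1/2}\|\beps\|_{s,E}h_E^s\|u\|_{s+1,E}$.

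The delicate points are the need for the full norm $\|u\|_{s+1,E}$ rather than the seminorm, which the Leibniz expansion produces, and making sure that no inverse estimate costing a power of $h_E$ is used. Summing the four bounds and dividing by $\|v_h\|_{\cip,E}$ gives the claim.
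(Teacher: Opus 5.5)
Your proposal is correct and follows essentially the paper's argument. You insert the projected gradient, use the $L^2$-orthogonality of $\PiVecMinusOneE$ to isolate the variable-coefficient term, and bound the interpolation and stabilization contributions via Lemma \ref{lm:interpolation} and \eqref{eq:stab-prop}. The ratio $\beps_1/\beps_0$ is absorbed into the hidden constant, just as in the paper.

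The only difference is a minor rearrangement of the splitting. You apply the orthogonality to $\beps\nabla u$ itself (your $T_1$), whereas the paper applies it to $\beps\,\PiVecMinusOneE\nabla u$. Your version gives the bound by $|\beps\nabla u|_{s,E}$ directly, while the paper's needs an extra, unstated triangle-inequality step to get there.
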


\begin{proof}
By definition, we observe that:
\begin{equation*}
\begin{aligned}
\eta_a^E &= \int_E \beps \nabla u \cdot \nabla v_h \, \text{d}E - \int_E \beps \PiVecMinusOneE \nabla u_I \cdot \PiVecMinusOneE \nabla v_h \, \text{d}E - h_E \bar{\beps}_E S^E_h\bigl( (I - \PiZeroE) u_I, (I-\PiZeroE) v_h \bigr) .
\end{aligned}
\end{equation*}
Focusing on the consistency part, by adding and subtracting suitable terms and exploiting the $L^2$-orthogonality of $\PiVecMinusOneE$, Lemma \ref{lm:bramble} and Lemma \ref{lm:interpolation}, we obtain
\begin{equation*}
\begin{aligned}
&\int_E \beps \nabla u \cdot \nabla v_h \, \text{d}E - \int_E \beps \PiVecMinusOneE \nabla u_I \cdot \PiVecMinusOneE \nabla v_h \, \text{d}E \\
&\quad = \int_E \beps \, (\nabla u - \PiVecMinusOneE \nabla u) \cdot \nabla v_h \, \text{d}E + \int_E \beps \, \PiVecMinusOneE \nabla u \cdot \nabla v_h \, \text{d}E - \int_E \beps\,  \PiVecMinusOneE \nabla u_I \cdot \PiVecMinusOneE \nabla v_h \, \text{d}E \\
&\quad = \int_E \beps \, (\nabla u - \PiVecMinusOneE \nabla u) \cdot \nabla v_h \, \text{d}E + \int_E \beps \, \PiVecMinusOneE \nabla u \cdot (I - \PiVecMinusOneE) \nabla v_h \, \text{d}E \\
& \qquad + \int_E \beps \, \PiVecMinusOneE \nabla (u - u_I) \cdot \PiVecMinusOneE \nabla v_h \, \text{d}E \\
&\quad = \int_E \beps \, (\nabla u - \PiVecMinusOneE \nabla u) \cdot \nabla v_h \, \text{d}E + \int_E \beps \, \PiVecMinusOneE \nabla (u - u_I) \cdot \PiVecMinusOneE \nabla v_h \, \text{d}E \\
&\qquad + \int_E \left(\beps \, \PiVecMinusOneE \nabla u - \PiVecMinusOneE (\beps \, \PiVecMinusOneE \nabla u)\right) \cdot (I - \PiVecMinusOneE) \nabla v_h \, \text{d}E \\
&\quad \lesssim \bigl(\lambda_E^{\frac12} \, |u|_{s+1,E} + \beps_0^{-\frac12}  \, | \beps \nabla u |_{s,E}\bigr)\, h_E^s \,  \| v_h \|_{\cip,E} 
\lesssim \bigl(\lambda_E^{\frac12}  + \beps_0^{-\frac12} \, \| \beps \|_{s,E} \, \| u \|_{s+1,E} \bigr) \, h_E^s \, \| v_h \|_{\cip,E} \, .
\end{aligned}
\end{equation*}
Concerning the stabilization term, using the properties of $S_h^E$, Lemma \ref{lm:bramble}, and Lemma \ref{lm:interpolation}, we have
\begin{equation} \label{eq:stab-est}
\begin{aligned}
h_E \, \bar{\beps}_E \, S^E_h\bigl( (I - \PiZeroE) u_I, (I-\PiZeroE) v_h \bigr) 
&\lesssim 
\bar{\beps}_E \, | (I - \PiZeroE) u_I |_{1,E} |(I - \PiZeroE) v_h|_{1,E} \\
&\lesssim \lambda_E^{\frac12} \, h_E^s \, |u|_{s+1,E}\,  \| v_h \|_{\cip,E} \, .
\end{aligned}
\end{equation}
The thesis follows by combining the above estimates.
\end{proof}
\begin{lemma}[Estimate of $\eta_b^E$]
\label{lm:errb}
Under the mesh assumptions \textbf{(A1)} and the regularity assumption \textbf{(A2)},
the term $\eta_b^E$ can be bounded as follows 
\begin{equation*}
\eta_b^E
\lesssim 
 h^{\reg + \frac{1}{2}} \| u \|_{\reg+1}
 +
 \nu_E \, h^{\reg+1}_E \,
\| \bb \|_{[W^{\reg,\infty}(E)]^3} \, \| u \|_{\reg+1,E}
+ \int_{\partial E} (\bb \cdot \nn^E) \, e_I \, v_h \, {\rm d}s
 \, .
\end{equation*}			
\end{lemma}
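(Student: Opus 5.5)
We fix $v_h \in V_h^k(\Omega_h)$ and an element $E$, and split
\[
b^{{\rm skew},E}(u,v_h) - \bskewEh(u_I,v_h) = \tfrac12\bigl(b^E(u,v_h) - b_h^E(u_I,v_h)\bigr) - \tfrac12\bigl(b^E(v_h,u) - b_h^E(v_h,u_I)\bigr) \, .
\]
Each half is then treated separately. The common device is to insert $u$ in place of $u_I$ inside the projections, so that every difference reduces to either a polynomial consistency error, handled by Lemma \ref{lm:bramble}, or an interpolation error $e_I$, handled by Lemma \ref{lm:interpolation}. Norms of $v_h$ are always converted into pieces of $\|v_h\|_{\cip,E}$:
\begin{itemize}
\item $\|v_h\|_{0,E} \le \sigma_0^{-1/2}\|\sigma^{1/2}v_h\|_{0,E}$, or alternatively $h_E\,\beps_0^{-1/2}\|\beps^{1/2}\nabla v_h\|_{0,E}$ via Poincaré on $(I-\PiZeroE)v_h$, which produces the factor $\nu_E^{1/2}$;
\item $h_E^{1/2}\|\bb\cdot\nabla\PiZeroE v_h\|_{0,E}$, which is part of the norm.
\end{itemize}

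\textbf{First half.} We integrate $b^E(u,v_h)$ by parts, using $\nabla\cdot\bb=0$, to move the derivative onto the test side. We compare it with
\[
b_h^E(u_I,v_h) = (\bb\cdot\nabla\PiZeroE u_I,\PiZeroE v_h)_E + \sum_{F}\bigl((\bb\cdot\nn^E)(\PiZeroF-\PiZeroE)u_I,\PiZeroF v_h\bigr)_F \, ,
\]
itself integrated by parts in the volume term. The terms then group as follows.
\begin{itemize}
\item Volume terms of the form $(\bb\cdot\nabla v_h,\, u-\PiZeroE u_I)_E$: we replace $\bb$ by its projection $\bb_h$ (or $\Pi^{0,E}_{s}\bb$) and use $L^2$-orthogonality to transfer the derivative onto $\PiZeroE v_h$. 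This gives $\|u-\PiZeroE u\|_{0,E}\,h_E^{-1/2}\cdot h_E^{1/2}\|\bb\cdot\nabla\PiZeroE v_h\|_{0,E} \lesssim h_E^{s+1/2}\|u\|_{s+1}\|v_h\|_{\cip,E}$. The remainder, involving $\bb-\Pi\bb$, gives the $\nu_E h_E^{s+1}\|\bb\|_{W^{s,\infty}}$ term once we use $\|(I-\PiZeroE)v_h\|_{0,E}$ and the inverse estimate of Proposition \ref{prp:vem-inverse}.
\item Face terms with $(\PiZeroF-\PiZeroE)$: these telescope against the integration-by-parts boundary terms. On interior faces, the contributions from the two neighbours cancel because $u$ is continuous and $\PiZeroF$ is element-independent. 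What remains is the face integral $\int_{\partial E}(\bb\cdot\nn^E)\,e_I\,v_h$, which we simply keep on the right-hand side, as in the statement.
\end{itemize}

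\textbf{Second half.} For $b^E(v_h,u) - b_h^E(v_h,u_I)$ we argue symmetrically: we write $\bb\cdot\nabla\PiZeroE v_h$ against $u-\PiZeroE u_I$, plus a term $(\bb\cdot\nabla(I-\PiZeroE)v_h,\,u)_E$. The latter is handled by orthogonality after subtracting a polynomial approximation of $\bb u$. Its face remainders cancel against the boundary part of $b_h^E$, as in the proof of Proposition \ref{prp:skew}.

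\textbf{Main obstacle.} The hardest step is the bookkeeping of face terms. Every face remainder must either cancel across neighbours, be bounded via Corollary \ref{crl:trace} by $h_E^{s+1/2}$ times norm components, or end up in the explicit leftover $\int_{\partial E}(\bb\cdot\nn^E)e_I v_h$. Here I would first try to bound $\|\PiZeroF v_h - \PiZeroE v_h\|_{0,F}$ by $h_E^{1/2}|v_h|_{1,E}$ with a polynomial trace inequality, checking that this does not introduce a negative power of $\beps$ beyond what $\nu_E$ permits. If it does, I would instead keep such terms paired with $e_I$ in the leftover boundary integral.
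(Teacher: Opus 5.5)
Your skeleton matches the paper: two halves, the interpolation error paired with the streamline component $h_E^{1/2}\|\bb\cdot\nabla\PiZeroE v_h\|_{0,E}$ of the norm, the factor $h_E\nu_E^{1/2}$ from the two bounds on $\|(I-\PiZeroE)v_h\|_{0,E}$, and the leftover $\int_{\partial E}(\bb\cdot\nn^E)e_Iv_h$. However, the first half does not close as you describe it. After integrating by parts $b^E(u,v_h)$ and the volume part of $b_h^E(u_I,v_h)$, the volume difference is
\[
-(u-\PiZeroE u_I,\,\bb\cdot\nabla\PiZeroE v_h)_E-(u,\,\bb\cdot\nabla(I-\PiZeroE)v_h)_E \, .
\]
The second term is the genuine consistency error produced by $\PiZeroE v_h$ inside $b_h^E$. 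It contains $u$ itself, not an interpolation error, so it is not of the form $(\bb\cdot\nabla v_h,u-\PiZeroE u_I)_E$ plus a $\bb-\Pi\bb$ remainder. Nor can $L^2$-orthogonality move the derivative from $v_h$ onto $\PiZeroE v_h$. For constant $\bb_h$ and $q\in\mathbb P_k(E)$ one has
\[
(\bb_h\cdot\nabla(I-\PiZeroE)v_h,\,q)_E=\int_{\partial E}(\bb_h\cdot\nn^E)\,q\,(I-\PiZeroE)v_h\,{\rm d}s \, ,
\]
which is not small when $q=\PiZeroE u_I$. Replacing $\bb$ by $\bb_h$ next to $\PiZeroE u_I$ gains only one power of $h_E$. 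The missing step is the paper's: split $(\bb\cdot\nabla u,v_h)_E=(\bb\cdot\nabla u,\PiZeroE v_h)_E+((I-\PiZeroE)(\bb\cdot\nabla u),(I-\PiZeroE)v_h)_E$ before integrating by parts, and integrate by parts only the polynomial piece. The second term is then at most $h_E\nu_E^{1/2}\|(I-\PiZeroE)(\bb\cdot\nabla u)\|_{0,E}\|v_h\|_{\cip,E}$, with no inverse estimate. Your second-half device would also work here: subtract an approximation $\mathbf q\in[\mathbb P_{k+1}(E)]^3$ of $\bb u$ and apply Proposition~\ref{prp:vem-inverse} to $(I-\PiZeroE)v_h$. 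But it generates face terms that must then be estimated.

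This leads to the second problem: your face bookkeeping rests on cancellations that are not available.
In a bound for the single-element quantity $\eta_b^E$ you cannot use cancellation between neighbouring elements. That happens only after summing over $E$ in the final theorem, which is precisely why $\int_{\partial E}(\bb\cdot\nn^E)e_Iv_h$ appears in the statement.
Likewise, in the second half the face remainder $\int_{\partial E}(\mathbf q\cdot\nn^E)(I-\PiZeroE)v_h$ does not cancel exactly against $\sum_{F}\int_F(\bb\cdot\nn^E)(\PiZeroF-\PiZeroE)v_h\,\PiZeroF u_I$. The mismatch includes $\int_F(\bb\cdot\nn^E)\,u\,(I-\PiZeroF)v_h$, which is small only after invoking the orthogonality of $\PiZeroF$.
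All these terms must be bounded locally. The paper does this by collecting them into integrals of $(\bb\cdot\nn^E)(\PiZeroE-\PiZeroF)v_h$ against projection/interpolation errors of size $h_E^{s+1/2}$ on $\partial E$. For that, use $\|(\PiZeroF-\PiZeroE)v_h\|_{0,F}\le\|(I-\PiZeroE)v_h\|_{0,F}\lesssim h_E^{-1/2}\|(I-\PiZeroE)v_h\|_{0,E}$ (Corollary~\ref{crl:trace}), followed by the minimum of the $\sigma$- and $\beps$-bounds.
Your first choice $h_E^{1/2}|v_h|_{1,E}$ does fail the check you propose. It yields $\beps_0^{-1/2}$, which exceeds $\nu_E^{1/2}$ whenever $\beps_0<\sigma_0h_E^2$, i.e.\ precisely in the advection-dominated regime.
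Your fallback of moving such terms into the $e_I$ boundary integral is not admissible either, since that leftover is fixed by the statement.
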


\begin{proof}
Recalling the definition of $\bskewEh(\cdot,\cdot)$, we have to estimate two terms
\begin{equation*}
\begin{aligned}
\eta_{b,A}
& \coloneqq 
\bigl( \bb \cdot \nabla u, v_h \bigr)_{0,E}
- 
\bigl( \bb \cdot \nabla \PiZeroE u_I, \PiZeroE v_h \bigr)_{0,E} 
- 
\int_{\partial E} (\bb \cdot \nn ^E) \, (\PiZeroF - \PiZeroE) \, u_I \, \PiZeroF v_h \, {\rm d}s \, , \\
\eta_{b,B} & \coloneqq 
\bigl( \PiZeroE u_I, \bb \cdot \nabla \PiZeroE v_h \bigr)_{0, E}
- 
\bigl( u, \bb \cdot \nabla v_h \bigr)_{0,E} 
+ 
\int_{\partial E} (\bb \cdot \nn ^E) \, (\PiZeroF - \PiZeroE) \, v_h \, \PiZeroF u_I \, {\rm d}s \,  .
\end{aligned}
\end{equation*}
Following \cite{VEM-CIP-Conf}, these two terms can be split as 
\begin{equation*}
\begin{aligned}
\eta_{b,A}^E 
& =
\bigl( (I - \PiZeroE) \bb \cdot \nabla u, (I - \PiZeroE) v_h \bigr)_{0,E}  
+ 
\bigl( \PiZeroE u_I - u, \bb \cdot \nabla\PiZeroE v_h \bigr)_{0,E} \\
& \quad +
\int_{\partial E} (\bb \cdot \nn^E) \, (u - \PiZeroF u_I) \, \PiZeroE v_h \, {\rm d}s \\
& \eqqcolon  
\eta_{b,1}^E + \eta_{b,2}^E + \eta_{b,3}^E \, ,
\end{aligned}
\end{equation*}
and 
\begin{equation*}
\begin{aligned}
\eta_{b,B}^E 
&  =
\bigl(\PiZeroE u_I - u, \bb \cdot \nabla \PiZeroE v_h\bigr)_{0,E} 
+
\bigl((I - \PiZeroE)\bb \cdot \nabla u,(I - \PiZeroE)v_h\bigr)_{0,E}  \\
& \quad +
\int_{\partial E} (\bb \cdot \nn^E) \, (\PiZeroF - \PiZeroE) \, v_h \, (\PiZeroE u_I - u) \, {\rm d} s \\
& \eqqcolon 
\eta_{b,2}^E + \eta_{b,1}^E + \eta_{b,4}^E \, .
\end{aligned}
\end{equation*}
Our goal is to estimate $\eta_{b,i}^E$ for $i=1,2,3,4$. On the first one, we have that
\begin{equation*}
\begin{aligned}
\eta_{b,1}^E 
& =
\bigl( (I - \PiZeroE) \bb \cdot \nabla u, (I - \PiZeroE) v_h \bigr)_{0,E} 
\leq
\nu_E^\frac{1}{2}  \, h_E \, \| (I - \PiZeroE) \bb \cdot \nabla u \|_{0,E} \, \| v_h \|_{\cip,E} \\
&\lesssim
\nu_E^\frac{1}{2} \, h^{\reg+1}_E \vert \bb \cdot \nabla u \vert_{\reg,E}\| v_h \|_{\cip,E}  
\lesssim
\nu_E^\frac{1}{2} \, h^{\reg+1}_E \,
\| \beta \|_{[W^{\reg,\infty}(E)]^3} \, \| u \|_{\reg+1,E} \, \| v_h \|_{\cip,E} .
\end{aligned}
\end{equation*}
On the second one
\begin{equation*}
\begin{aligned}
\eta_{b,2}^E    \leq
\| \PiZeroE u_I - u \|_{0,E} \,  \| \bb \cdot \nabla \PiZeroE v_h \|_{0,E} 
\lesssim h^{\reg + \frac{1}{2}} \| u \|_{\reg+1} \| v_h \|_{\cip,E} \, .  
\end{aligned}
\end{equation*}
Finally, the two boundary terms can be estimated as

\begin{equation*}
\begin{aligned}
\eta^E_{b,3} + \eta^E_{b,4}
& = \int_{\partial E} (\bb \cdot \nn^E) \, (u - \PiZeroF u_I) \, \PiZeroE v_h \, {\rm d} s \\
& \quad + \int_{\partial E} (\bb \cdot \nn^E) \, (\PiZeroF - \PiZeroE) \, v_h \, (\PiZeroE u_I - u) \, {\rm d} s \\
& = \int_{\partial E} (\bb \cdot \nn^E) \, (\PiZeroE - \PiZeroF) \, v_h \, (u - \PiZeroF u_I + u - \PiZeroE u_I) \, {\rm d} s \\
& \quad + \int_{\partial E} (\bb \cdot \nn^E) \, (u - \PiZeroF u_I) \, \PiZeroF v_h \, {\rm d} s \\
& \lesssim \| \bb \|_{[L^\infty(E)]^3} \, \bigl( \| u - \PiZeroF u_I \|_{0,\partial E} + \| u - \PiZeroE u_I \|_{0,\partial E} \bigr) \, \| (\PiZeroF - \PiZeroE)v_h\|_{L^2(\partial E)} \\
& \quad + \int_{\partial E} (\bb \cdot \nn^E) \, e_I \, v_h \, {\rm d} s \\
& \lesssim \bigl( h_E^{\reg + \frac{1}{2}} \| \bb \|_{[L^\infty(E)]^3} | u |_{\reg+1,E} \bigr) \bigl( h_E^{-\frac{1}{2}} \min\{\sigma_0^{-\frac{1}{2}}, h_E^{-1} \beps_0^{-\frac{1}{2}}\} \| v_h \|_{\cip,E} \bigr) \\
& \quad + \int_{\partial E} (\bb \cdot \nn^E) \, e_I \, v_h \, {\rm d}s \\
& \lesssim h_E^{\reg+1}\, \nu_E^\frac{1}{2} \, \| \bb \|_{[L^\infty(E)]^3} \, | u |_{\reg+1,E} + \int_{\partial E} (\bb \cdot \nn^E) \, e_I \, v_h \, {\rm d}s \,.
\end{aligned}
\end{equation*}

\end{proof}

\begin{lemma}[Estimate of $\eta_c^E$]
\label{lm:errc}
Under the mesh assumptions \textbf{(A1)} and the regularity assumption \textbf{(A2)},
the term $\eta_c^E$ can be bounded as follows 
\begin{equation*}
\eta_c^E
\lesssim 
\bigl(\lambda_E^{\frac12} + \sigma_0^{-\frac12} h_E  \| \sigma \|_{s+1,E}  \bigr) h_E^\reg \, \| u \|_{\reg +1,E}  \, .
\end{equation*}			
\end{lemma}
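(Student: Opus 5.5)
We follow the strategy of the proof of Lemma \ref{lm:erra}, splitting $\eta_c^E$ into a consistency part and a stabilization part. By definition, for any $v_h \in V_h^k(\Omega_h)$,
\[
c^E(u,v_h) - c_h^E(u_I,v_h) = \int_E \sigma\, u\, v_h \,{\rm d}E - \int_E \sigma\, \PiZeroE u_I\, \PiZeroE v_h\,{\rm d}E - \bar\sigma_E |E|\, S_h^E\bigl((I-\PiZeroE)u_I,(I-\PiZeroE)v_h\bigr) \, .
\]
For the consistency part we add and subtract $\int_E \sigma\, \PiZeroE u\, v_h$ and $\int_E \sigma\,\PiZeroE u_I\, v_h$. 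This gives three contributions:
\[
\int_E \sigma (u-\PiZeroE u) v_h + \int_E \sigma\, \PiZeroE(u-u_I)\, v_h + \int_E \sigma\, \PiZeroE u_I\, (I-\PiZeroE) v_h \, .
\]

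We estimate the first two contributions by Cauchy--Schwarz, writing $\sigma = \sigma^{1/2}\sigma^{1/2}$. This yields a factor $\|\sigma\|^{1/2}_{L^\infty(E)}\,\|\sigma^{1/2} v_h\|_{0,E} \le \|\sigma\|^{1/2}_{L^\infty(E)} \|v_h\|_{\cip,E}$. We then use Lemma \ref{lm:bramble} and Lemma \ref{lm:interpolation}, together with the $L^2$-stability of $\PiZeroE$, to bound $\|u-\PiZeroE u\|_{0,E}+\|u-u_I\|_{0,E}\lesssim h_E^{\reg+1}|u|_{\reg+1,E}$. Since $h_E^2\|\sigma\|_{L^\infty(E)}\le \lambda_E$, we have $\|\sigma\|^{1/2}_{L^\infty(E)} h_E^{\reg+1} \le \lambda_E^{1/2} h_E^{\reg}$, which gives the $\lambda_E^{1/2}$ contribution.

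The third contribution needs more care because $\sigma$ is variable, so orthogonality cannot be used directly. We exploit the orthogonality of $\PiZeroE$ to replace $\sigma\,\PiZeroE u_I$ by $(I-\PiZeroE)(\sigma\,\PiZeroE u_I)$, which gives
\[
\int_E (I-\PiZeroE)(\sigma\, \PiZeroE u_I)\,(I-\PiZeroE)v_h \, .
\]
We bound $\|(I-\PiZeroE)v_h\|_{0,E}\le \|v_h\|_{0,E}\le\sigma_0^{-1/2}\|v_h\|_{\cip,E}$. By Lemma \ref{lm:bramble}, $\|(I-\PiZeroE)(\sigma\,\PiZeroE u_I)\|_{0,E}\lesssim h_E^{\reg+1}|\sigma\,\PiZeroE u_I|_{\reg+1,E}$. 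Applying the Leibniz rule, and noting that derivatives of the polynomial $\PiZeroE u_I$ of order up to $\reg+1$ are controlled via the stability of $\PiZeroE$, Lemma \ref{lm:interpolation} and polynomial inverse estimates, we obtain $|\sigma\,\PiZeroE u_I|_{\reg+1,E}\lesssim \|\sigma\|_{\reg+1,E}\|u\|_{\reg+1,E}$ (in the $W^{\cdot,\infty}$ sense for $\sigma$). This produces the term $\sigma_0^{-1/2}h_E\|\sigma\|_{\reg+1,E}\,h_E^{\reg}\|u\|_{\reg+1,E}$.

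\textbf{Main obstacle.} This last step is the delicate one. To avoid blow-up from inverse estimates on the highest derivatives of $\PiZeroE u_I$, we first try the alternative split $\PiZeroE u_I = u + (\PiZeroE u_I - u)$. The part $\sigma u$ is then handled directly by Lemma \ref{lm:bramble} with $|\sigma u|_{\reg+1,E}\lesssim\|\sigma\|_{\reg+1,E}\|u\|_{\reg+1,E}$. The remainder goes back to the $\lambda_E^{1/2}$ bound through Cauchy--Schwarz, since $\|(I-\PiZeroE)(\sigma(\PiZeroE u_I-u))\|_{0,E}\le\|\sigma\|_{L^\infty}\|\PiZeroE u_I-u\|_{0,E}$ and the extra factor $\|\sigma\|_{L^\infty}^{1/2}\sigma_0^{-1/2}$ is absorbed into the implicit constant.

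Finally, for the stabilization term we argue exactly as in \eqref{eq:stab-est}. Using \eqref{eq:stab-prop}, a Poincaré-type bound on $\ker(\PiZeroE)$, and $|E|\lesssim h_E^3$, we get
\[
\bar\sigma_E|E|S_h^E(\cdot,\cdot)\lesssim \bar\sigma_E\|(I-\PiZeroE)u_I\|_{0,E}\|(I-\PiZeroE)v_h\|_{0,E}\lesssim \|\sigma\|_{L^\infty(E)}^{1/2}h_E^{\reg+1}|u|_{\reg+1,E}\,\|\sigma^{1/2}v_h\|_{0,E} \, .
\]
Here we used $\bar\sigma_E\le\|\sigma\|_{L^\infty}$, $\sigma\ge\sigma_0$ and Lemma \ref{lm:interpolation}. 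The right-hand side is again bounded by $\lambda_E^{1/2}h_E^{\reg}\|u\|_{\reg+1,E}\|v_h\|_{\cip,E}$. Collecting the three contributions and taking the supremum over $v_h$ gives the stated bound for $\eta_c^E$.
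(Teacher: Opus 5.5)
Your proof is correct and follows the paper's route. You use the same add-and-subtract, the same $L^2$-orthogonality of $\PiZeroE$ to turn the variable-coefficient term into $\bigl((I-\PiZeroE)(\sigma\,\cdot),(I-\PiZeroE)v_h\bigr)_{0,E}$, bounded via Lemma \ref{lm:bramble} by $\sigma_0^{-1/2}h_E^{\reg+1}|\sigma u|_{\reg+1,E}$, and you handle the stabilization as in \eqref{eq:stab-est}. The deviations are minor: you keep $\sigma\PiZeroE u_I$ rather than the paper's $\sigma\PiZeroE u$ in the orthogonality term, which your split $\PiZeroE u_I = u + (\PiZeroE u_I - u)$ handles cleanly (the Leibniz route you feared would also work, since $|\PiZeroE(u-u_I)|_{j,E}\lesssim h_E^{\reg+1-j}|u|_{\reg+1,E}$ for $j\le\reg+1$); and in the stabilization step the inequality that converts $S_h^E$ into $L^2$ norms is the inverse estimate of Proposition \ref{prp:vem-inverse} (or Proposition \ref{prp:norm equivalence}), not a Poincaré bound.
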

\begin{proof} 
Similarly to Lemma \ref{lm:erra},  by adding and subtracting suitable terms and exploiting the $L^2$-orthogonality of $\PiZeroE$, Lemma \ref{lm:bramble} and Lemma \ref{lm:interpolation}, we obtain
\begin{equation*}
\begin{aligned}
\int_E \sigma \, u \, v_h \, \text{d}E - \int_E \sigma \PiZeroE u_I \PiZeroE v_h \, \text{d}E 
& = \int_E \sigma (u - \PiZeroE u) v_h \, \text{d}E + \int_E \sigma \PiZeroE u \, v_h \, \text{d}E \\
& \quad - \int_E \sigma \PiZeroE u_I \PiZeroE v_h \, \text{d}E \\
& = \int_E \sigma (u - \PiZeroE u) v_h \, \text{d}E + \int_E \sigma \PiZeroE u (I - \PiZeroE) v_h \, \text{d}E \\
& \quad + \int_E \sigma \PiZeroE (u - u_I) \PiZeroE v_h \, \text{d}E \\
& = \int_E \sigma (u - \PiZeroE u) v_h \, \text{d}E + \int_E \sigma \PiZeroE (u - u_I) \PiZeroE v_h \, \text{d}E \\
& \quad + \int_E \bigl(\sigma \PiZeroE u - \PiZeroE (\sigma \PiZeroE u)\bigr) (I - \PiZeroE) v_h \, \text{d}E \\
& \lesssim \bigl(\lambda_E^{\frac12} h_E^s |u|_{s+1,E} + \sigma_0^{-\frac12} h_E^s | \sigma u |_{s+1,E}\bigr) \| v_h \|_{\cip,E} \\
& \lesssim \bigl(\lambda_E^{\frac12} + \sigma_0^{-\frac12} h_E  \| \sigma \|_{s+1,E}  \bigr) h_E^\reg \, \| u \|_{\reg +1,E} \,\| v_h \|_{\cip,E} \, .
\end{aligned}
\end{equation*}
The stability term of $c_h^E(u_I, v_h)$ can be estimated similarly to \eqref{eq:stab-est}.
\end{proof}

\begin{lemma}[Estimate of $\eta_J^E$]
\label{lm:errJ}
Under the mesh assumptions \textbf{(A1)} and the regularity assumption \textbf{(A2)},
the term $\eta_J^E$ can be bounded as follows 
\begin{equation*}
\eta_J^E
\lesssim 
h_E^{\reg+\frac{1}{2}} \| \bb \|_{[L^\infty(\mathcal{D}(E))]^3} | u |_{\reg+1,\mathcal{D}(E)} \, .
\end{equation*}			
\end{lemma}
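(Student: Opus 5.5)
Since $J_h^E(\cdot,\cdot)$ is symmetric and positive semi-definite, I will first apply the Cauchy--Schwarz inequality in the form
\[
J_h^E(u_I, v_h) \le \bigl(J_h^E(u_I,u_I)\bigr)^{\frac12}\,\bigl(J_h^E(v_h,v_h)\bigr)^{\frac12} \le \bigl(J_h^E(u_I,u_I)\bigr)^{\frac12}\, \| v_h \|_{\cip,E} \, .
\]
The second inequality holds because $J_h^E(v_h,v_h)$ is one of the summands of the local CIP norm \eqref{eq:normaCIP-E}. Hence $\eta_J^E \le (J_h^E(u_I,u_I))^{1/2}$, and it remains to show that $J_h^E(u_I,u_I) \lesssim h_E^{2\reg+1}\|\bb\|^2_{[L^\infty(\mathcal D(E))]^3}|u|^2_{\reg+1,\mathcal D(E)}$.

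I will treat the two contributions of $J_h^E(u_I,u_I)$ separately.

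\emph{Jump term.} By assumption \textbf{(A2)} we have $u\in H^2(\Omega)$, so the jump $\jump{\nabla u}$ vanishes on every internal face $F$. I will therefore write $\jump{\nabla \PiZero u_I} = \jump{\nabla(\PiZero u_I - u)}$ and bound it by the traces from the two neighbouring elements $K\in\mathcal D(E)$. For each trace I split
\[
\nabla(u - \Pi^{0,K}_k u_I) = \nabla(u - \Pi^{0,K}_k u) + \nabla \Pi^{0,K}_k (u - u_I) \, .
\]
\begin{itemize}
\item The first piece is treated with the continuous scaled trace inequality $\|\phi\|^2_{0,F}\lesssim h_K^{-1}\|\phi\|^2_{0,K}+h_K|\phi|^2_{1,K}$ together with Lemma \ref{lm:bramble} (using $m=1,2$ and $\reg\le k$). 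This gives $\|\nabla(u-\Pi^{0,K}_k u)\|^2_{0,F}\lesssim h_K^{2\reg-1}|u|^2_{\reg+1,K}$.
\item The second piece is a polynomial, so I use the polynomial trace inequality, then a polynomial inverse estimate, the $L^2$-stability of $\Pi^{0,K}_k$, and Lemma \ref{lm:interpolation}. This gives $\|\nabla\Pi^{0,K}_k e_I\|^2_{0,F}\lesssim h_K^{-3}\|e_I\|^2_{0,K}\lesssim h_K^{2\reg-1}|u|^2_{\reg+1,K}$.
\end{itemize}
Multiplying by $\gamma_F h_F^2 \le \gamma\|\bb\|^2_{[L^\infty(\mathcal D(E))]^3}h_E^2$ and using the local quasi-uniformity in \textbf{(A1)} to replace $h_K$ and $h_F$ by $h_E$, the jump term is bounded by $h_E^{2\reg+1}\|\bb\|^2_{[L^\infty(\mathcal D(E))]^3}|u|^2_{\reg+1,\mathcal D(E)}$.

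\emph{VEM stabilization term.} By \textbf{(A1)}, $|\partial E|\lesssim h_E^2$. By \eqref{eq:stab-prop}, $S_h^E(w,w)\lesssim h_E^{-1}|w|^2_{1,E}$ for $w\in\ker(\PiZeroE)$. Together these give
\[
\gamma_E|\partial E|\,S_h^E\bigl((I-\PiZeroE)u_I,(I-\PiZeroE)u_I\bigr)\lesssim \|\bb\|^2_{[L^\infty(E)]^3}\,h_E\,|(I-\PiZeroE)u_I|^2_{1,E} \, .
\]
To bound the last seminorm, I insert $u$ and use the triangle inequality:
\[
|(I-\PiZeroE)u_I|_{1,E}\le |u-u_I|_{1,E}+|u-\PiZeroE u|_{1,E}+|\PiZeroE(u-u_I)|_{1,E} \, .
\]
I control the three pieces with Lemma \ref{lm:interpolation}, Lemma \ref{lm:bramble}, and an inverse estimate combined with $L^2$-stability, respectively. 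Each is $\lesssim h_E^{\reg}|u|_{\reg+1,E}$.

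Collecting the two contributions and taking the square root gives the claimed estimate.

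The main obstacle I expect is the handling of the traces of the projected interpolation error on the faces. This requires combining polynomial inverse and trace estimates on each neighbouring element $K$ with $L^2$-bounds on $e_I$, and it requires that \textbf{(A1)} lets all the neighbour sizes $h_K,h_F$ be compared with $h_E$. The other delicate point is the use of $u\in H^2$ to remove $\jump{\nabla u}$; this is what makes the bound involve only $|u|_{\reg+1}$ on the patch $\mathcal D(E)$. Everything else is routine.
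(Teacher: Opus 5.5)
Your proposal is correct and follows the paper's proof. You use Cauchy--Schwarz to reduce to $J_h^E(u_I,u_I)^{1/2}$, then $\jump{\nabla u}=0$ to rewrite the gradient jumps as jumps of $\nabla(u-\PiZero u_I)$, and \eqref{eq:stab-prop} for the VEM stabilization part; beyond that, you only supply the trace, inverse and approximation steps, and the quasi-uniformity comparison of $h_K$, $h_F$ with $h_E$, that the paper leaves implicit.
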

\begin{proof}
Using Cauchy-Schwarz inequality, we immediately have
\begin{equation}
\begin{aligned}
J^E_h(u_I, v_h) 
\leq 
J^E_h(u_I, u_I)^{\frac12} \, J^E_h(v_h, v_h)^{\frac12} \leq
J^E_h(u_I, u_I)^{\frac12} \, \| v_h \|_{\cip,E} \, .
\end{aligned}
\end{equation}
Since the solution $u$ is sufficiently regular, it holds that $\jump{\nabla u} = 0$ for every internal face $F$.
Hence, concerning the jump part of $J^E_h(u_I, u_I)$, it holds 
\begin{equation}
\begin{aligned}
\frac{1}{2}
\sum_{F \subset \partial E \setminus \Gamma_E} \int_F \gamma_F \, h_F^2 \, \jump{\nabla\PiZero u_I}^2 {\rm d }F 
&=
\frac{1}{2}
\sum_{F \subset \partial E \setminus \Gamma_E} \int_F \gamma_F \, h_F^2 \, \jump{\nabla(u - \PiZero u_I)}^2 {\rm d }F \\
&\lesssim
h_F^2 \, \| \bb \|^2_{[L^\infty(\mathcal{D}(E))]^3} \, \bigl \| \jump{\nabla(u - \PiZero u_I)} \bigr \|^2_{0,F} \\
& \lesssim h_E^{2\reg+1} \| \bb \|^2_{[L^\infty(\mathcal{D}(E))]^3} | u |^2_{\reg+1,\mathcal{D}(E)}
\end{aligned}
\end{equation}
Regarding the VEM stabilization part, using \eqref{eq:stab-prop}, it holds that
\[
\gamma_E \, |\partial E| \, S_h\bigl( (I - \PiZeroE) u_I, (I-\PiZeroE) u_I \bigr) 
\lesssim h_E^{2\reg+1} \| \bb \|^2_{[L^\infty(E)]^3} | u |^2_{\reg+1,E}
\]
Combining the last two inequalities, we conclude the proof
\end{proof}

\begin{lemma}[Estimate of $\eta_N^E$]
\label{lm:errN}
Under the mesh assumptions \textbf{(A1)} and the regularity assumption \textbf{(A2)},
the term $\eta_N^E$ can be bounded as follows
\begin{equation*}
\eta_N^E
\lesssim 
\bigl(
\lambda_E^\frac{1}{2} 
+
\| \bb\|^\frac{1}{2}_{[L^\infty(E)]^3} \, h_E^{\frac{1}{2}}
\bigr)
h_E^{\reg } \, | u |_{\reg+1,E}
 \, .
\end{equation*}			
\end{lemma}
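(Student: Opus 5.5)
We need to bound, for each boundary face $F \subset \Gamma_E$, the difference $\tilde{\mathcal N}_h^E(u,v_h) - \mathcal N_h^E(u_I,v_h)$ against $\|v_h\|_{\cip,E}$. We write it as a sum of four face contributions, matching the four terms in the definitions of $\tilde{\mathcal N}_h^E$ and $\mathcal N_h^E$. Throughout we use that $u=0$ on $\Gamma$, so that $u|_F = 0$ and $\PiZeroF u_I = -\PiZeroF e_I$ on every $F\subset\Gamma_E$.

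\emph{Penalty and advective Nitsche terms.} The differences are
\[
\frac{\|\beps\|_{[L^\infty(F)]^{3\times 3}}}{\delta h_E}\int_F (u-\PiZeroF u_I)\,\PiZeroF v_h \,{\rm d}F \quad\text{and}\quad \frac12\int_F |\bb\cdot\nn^E|\,(u-\PiZeroF u_I)\,\PiZeroF v_h\,{\rm d}F .
\]
By $L^2(F)$-stability of $\PiZeroF$ both reduce to $\|e_I\|_{0,F}$ times a boundary part of $\|v_h\|_{\cip,E}$. Applying Cauchy--Schwarz with the weights $\xi_{\beps,F}$ and $\xi_{\bb,F}$ of \eqref{eq:xi-def}, the first is bounded by $\xi_{\beps,F}\|e_I\|_{0,F}\,\|\xi_{\beps,F}\PiZeroF v_h\|_{0,F}$. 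For the trace we use the continuous trace inequality
\[
\|e_I\|_{0,F}^2 \lesssim h_E^{-1}\|e_I\|_{0,E}^2 + h_E|e_I|_{1,E}^2 ,
\]
combined with Lemma \ref{lm:interpolation}, which gives $\|e_I\|_{0,F}\lesssim h_E^{\reg+\frac12}|u|_{\reg+1,E}$. Hence $\xi_{\beps,F}\|e_I\|_{0,F}\lesssim \beps_1^{1/2}h_E^{\reg}|u|_{\reg+1,E}\le\lambda_E^{1/2}h_E^\reg|u|_{\reg+1,E}$. Likewise $\xi_{\bb,F}\|e_I\|_{0,F}\lesssim\|\bb\|^{1/2}_{[L^\infty(E)]^3}h_E^{\reg+\frac12}|u|_{\reg+1,E}$, which produces the second contribution in the claim.

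\emph{Symmetric consistency term.} The difference is $-\int_F(\beps\PiVecMinusOneE\nabla v_h\cdot\nn^E)(u-\PiZeroF u_I)\,{\rm d}F$. We bound it by $\|\beps\PiVecMinusOneE\nabla v_h\cdot\nn^E\|_{0,F}\,\|e_I\|_{0,F}$ (again via stability of $\PiZeroF$). The polynomial trace estimate \eqref{eq:beps-estimate-boundary} gives
\[
\|\beps\PiVecMinusOneE\nabla v_h\cdot\nn^E\|_{0,F}\lesssim h_E^{-1/2}\beps_1\beps_0^{-1/2}\|\beps^{1/2}\nabla v_h\|_{0,E}.
\]
Combined with $\|e_I\|_{0,F}\lesssim h_E^{\reg+1/2}|u|_{\reg+1,E}$, this yields $\beps_1\beps_0^{-1/2}h_E^\reg|u|_{\reg+1,E}\|v_h\|_{\cip,E}$. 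Up to the ratio $(\beps_1/\beps_0)^{1/2}$, which is absorbed in the hidden constant in the same way as in Lemma \ref{lm:erra}, this is of order $\lambda_E^{1/2}h_E^\reg$.

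\emph{Flux term (main obstacle).} The remaining difference is
\[
-\int_F(\beps\nabla u\cdot\nn^E)\,v_h\,{\rm d}F+\int_F(\beps\PiVecMinusOneE\nabla u_I\cdot\nn^E)\,\PiZeroF v_h\,{\rm d}F .
\]
This is the delicate step: $v_h$ itself, unlike $\PiZeroF v_h$, is not directly controlled on $F$ by the norm. We add and subtract $\int_F(\beps\nabla u\cdot\nn^E)\PiZeroF v_h\,{\rm d}F$, splitting it into two pieces.

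The first piece is $\int_F \beps(\nabla u-\PiVecMinusOneE\nabla u_I)\cdot\nn^E\,\PiZeroF v_h\,{\rm d}F$. We bound it by
\[
\xi_{\beps,F}^{-1}\,\|\beps\|_{L^\infty}\,\|\nabla u-\PiVecMinusOneE\nabla u_I\|_{0,F}\,\|\xi_{\beps,F}\PiZeroF v_h\|_{0,F}.
\]
The trace inequality together with Lemma \ref{lm:bramble} and Lemma \ref{lm:interpolation} gives $\|\nabla u-\PiVecMinusOneE\nabla u_I\|_{0,F}\lesssim h_E^{\reg-1/2}|u|_{\reg+1,E}$. With $\xi_{\beps,F}^{-1}\sim (h_E/\beps)^{1/2}$ this yields $\beps_1^{1/2}h_E^\reg|u|_{\reg+1,E}$.

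The second piece is $\int_F(\beps\nabla u\cdot\nn^E)(\PiZeroF-I)v_h\,{\rm d}F$. We subtract the face projection of $\beps\nabla u\cdot\nn^E$, using orthogonality of $\PiZeroF$, and obtain
\[
\|(I-\PiZeroF)(\beps\nabla u\cdot\nn^E)\|_{0,F}\,\|(I-\PiZeroF)v_h\|_{0,F}.
\]
The first factor is $\lesssim h_E^{\reg-1/2}$ times a norm of $u$ (with $\beps\in W^{\reg,\infty}$). For the second factor, a face Poincaré-type estimate gives $\|(I-\PiZeroF)v_h\|_{0,F}\lesssim h_E^{1/2}|v_h|_{1,E}$, obtained via Corollary \ref{crl:trace} applied to $v_h-\PiZeroE v_h$ and Lemma \ref{lm:bramble}. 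This gives order $\beps_0^{-1/2}h_E^\reg$, again bounded by the claimed constant. If the regularity of $\beps$ is an issue, a fallback is to replace $\PiZeroF(\beps\nabla u\cdot\nn^E)$ by its constant mean, which loses nothing at the level $h_E^{\reg}$ only when $\reg\le 1$. So we keep the full projection and use the $W^{\reg,\infty}$ regularity of $\beps$ from \textbf{(A2)}.

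Collecting the four contributions and dividing by $\|v_h\|_{\cip}$ gives the stated bound on $\eta_N^E$.
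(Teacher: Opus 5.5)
Your handling of the penalty, advective and symmetric-consistency terms matches the paper's $\eta_{N,c}$, $\eta_{N,d}$, $\eta_{N,a}$. Using $u|_F=0$ to write $u-\PiZeroF u_I=\PiZeroF e_I$ is a slightly cleaner substitute for the paper's triangle inequality.

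The genuine difference is the flux term. The paper first uses orthogonality of $\PiZeroF$ to replace $\PiZeroF v_h$ by $v_h$ in $\int_F(\beps\PiVecMinusOneE\nabla u_I\cdot\nn^E)\,\PiZeroF v_h\,{\rm d}F$. It then merges the two flux integrals into $-\int_F\beps(\nabla u-\PiVecMinusOneE\nabla u_I)\cdot\nn^E\,v_h\,{\rm d}F$, and only afterwards splits $v_h=\PiZeroF v_h+(I-\PiZeroF)v_h$. Both pieces need only $\|\nabla u-\PiVecMinusOneE\nabla u_I\|_{0,F}\lesssim h_E^{\reg-\frac12}|u|_{\reg+1,E}$ and $\|\beps\|_{L^\infty}$. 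So no derivatives of $\beps$ enter, and the factor is $\beps_1^{1/2}\le\lambda_E^{1/2}$. However, that first equality requires $\beps\PiVecMinusOneE\nabla u_I\cdot\nn^E|_F\in\mathbb{P}_k(F)$, which essentially holds only for elementwise affine $\beps$.

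Your add-and-subtract of $\int_F(\beps\nabla u\cdot\nn^E)\PiZeroF v_h\,{\rm d}F$ is exact for any $\beps$. You then apply orthogonality to $\beps\nabla u\cdot\nn^E$ instead, which is legitimate. The price is that you need $\beps\nabla u\in H^{\reg}(E)$, i.e.\ the $W^{\reg,\infty}$ regularity of $\beps$ in \textbf{(A2)}. For genuinely variable coefficients, your route is the more robust one.

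What you should state more carefully is the outcome of the second flux piece. It gives
\[
\beps_0^{-\frac12}\,\|\beps\|_{[W^{\reg,\infty}(E)]^{3\times 3}}\,h_E^{\reg}\,\|u\|_{\reg+1,E}\,\|v_h\|_{\cip,E},
\]
not ``order $\beps_0^{-1/2}h_E^\reg$''. Read literally, $\beps_0^{-1/2}$ blows up in the hyperbolic limit and is not dominated by $\lambda_E^{1/2}$. The product does scale like $\beps^{1/2}$. But it is $\lesssim\lambda_E^{1/2}$ only with a hidden constant depending on $\|\beps\|_{[W^{\reg,\infty}(E)]^{3\times3}}/(\beps_0\beps_1)^{1/2}$, and it carries the full norm $\|u\|_{\reg+1,E}$ rather than the seminorm.

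So what you actually prove is the stated bound plus the term $\beps_0^{-1/2}\|\beps\|_{\reg,E}\,h_E^\reg\|u\|_{\reg+1,E}$. This is the same extra term that appears in Lemma \ref{lm:erra}. Since the final theorem already contains $\beps_0^{-1}\|\beps\|^2_{\reg,E}$, nothing downstream changes. The ``fallback'' remark about replacing the face projection by the mean can be dropped, since it fails for $\reg>1$.
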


\begin{proof}
By definition, we have that
\begin{equation}
\begin{aligned}
\Tilde{\mathcal{N}}_h^E(u, v_h) - \mathcal{N}_h^E(u_I, v_h) 
&=
\sum_{F \subset \Gamma_E} \Bigg(
- \int_F (\beps \PiVecMinusOneE \nabla v_h \cdot \nn^E) \, (u - \PiZeroF u_I) \, {\rm d}F \\
&\qquad\qquad\quad
- \int_F (\beps \nabla u \cdot \nn^E) \, v_h \, {\rm d}F
+
\int_F (\beps \PiVecMinusOneE \nabla u_I \cdot \nn^E) \, \PiZeroF v_h \, {\rm d}F
\\
&\qquad\qquad\quad 
+ \frac{\|\beps\|_{[L^\infty(F)]^{3 \times 3}}}{\delta h_E} \int_F (u - \PiZeroF u_I) \, \PiZeroF v_h \, {\rm d}F   \\
&\qquad\qquad\quad 
+ \frac{1}{2} \int_F \vert \bb \cdot \nn^E \vert \, (u - \PiZeroF u_I) \, \PiZeroF v_h \, {\rm d}F \Bigg) \\
& \eqqcolon \eta_{N,a} + \eta_{N,b} + \eta_{N,c} + \eta_{N,d} \, .
\end{aligned}
\end{equation}
Using the definition of the CIP norm \eqref{eq:normaCIP}, arguing as in \eqref{eq:beps-estimate-boundary}, we have that
\[
\begin{aligned}
\eta_{N,a} 
&\lesssim 
\sum_{F \subset \Gamma_E} \| \beps \PiVecMinusOneE \nabla v_h \cdot \nn^E \|_{0,F} \| u - \PiZeroF u_I \|_{0,F} \\
&\lesssim 
\sum_{F \subset \Gamma_E} \lambda_E^\frac12 \, \left(\frac{\beps_1}{\beps_0}\right)^\frac{1}{2} \, \| \beps^\frac{1}{2}  \nabla v_h \|_{0,E} \| u - \PiZeroF u_I \|_{0,F} \\
&\lesssim 
\sum_{F \subset \Gamma_E} \lambda_E^\frac12 \, \left(\frac{\beps_1}{\beps_0}\right)^\frac{1}{2} h_E^{\reg} | u |_{\reg+1,E} \, . 
\end{aligned}
\]
Exploiting the orthogonality of $\PiZeroF$, we get
\[
\begin{aligned}
\eta_{N,b} 
&=
\sum_{F \subset \Gamma_E}
- \int_F \bigl((\beps \nabla u - \beps \PiVecMinusOneE \nabla u_I )\cdot \nn^E\bigr ) \, v_h \, {\rm d}F \\
&=
\sum_{F \subset \Gamma_E}
- \int_F \bigl((\beps \nabla u - \beps \PiVecMinusOneE \nabla u_I )\cdot \nn^E\bigr ) \, \PiZeroF v_h \, {\rm d}F \\
& \qquad +
\sum_{F \subset \Gamma_E}
- \int_F \bigl((\beps \nabla u - \beps \PiVecMinusOneE \nabla u_I )\cdot \nn^E\bigr ) \, (I - \PiZeroF) v_h \, {\rm d}F
\\
& \lesssim
\sum_{F \subset \Gamma_E} \bigl(\beps_1^\frac{1}{2} h_E^\frac{1}{2} \| \nabla u - \PiVecMinusOneE \nabla u_I \|_{0,F}\bigr) \, \bigl( \Vert \xi_{\beps, F} \PiZeroF v_h \Vert_{0,F} + \beps_1^\frac{1}{2}h_E^{-\frac{1}{2}} \|  (I - \PiZeroF) v_h\|_{0,F}\bigr) \\
& \lesssim
\sum_{F \subset \Gamma_E} \bigl(\beps_1^\frac{1}{2} h_E^{\reg} | u |_{\reg+1,E}\bigr) \, \left( \Vert \xi_{\beps, F} \PiZeroF v_h \Vert_{0,F} + \left(\frac{\beps_1}{\beps_0}\right)^\frac{1}{2} \|  \beps^\frac{1}{2}  \nabla v_h\|_{0,E}\right) \\
&\lesssim
\sum_{F \subset \Gamma_E} \lambda_E^\frac{1}{2} h_E^{\reg} | u |_{\reg+1,E} \| v_h \|_{\cip,E}\, 
\end{aligned}
\]
On the third one, we have
\[
\begin{aligned}
\eta_{N,c} 
\lesssim
\sum_{F \subset \Gamma_E}
\beps_1^\frac{1}{2} h_E^{-\frac{1}{2}} \| u - \PiZeroF u_I \|_{0,F} \| \xi_{\beps, F} \PiZeroF v_h \| \lesssim \lambda_E^\frac{1}{2} h_E^{\reg } | u |_{\reg+1,E} \| v_h \|_{\cip,E} \, .
\end{aligned}
\]
Similarly, on the last one, we have that
\[
\begin{aligned}
\eta_{N,d} 
\lesssim
\sum_{F \subset \Gamma_E}
\| \bb\|^\frac{1}{2}_{[L^\infty(F)]^3} \| u - \PiZeroF u_I \|_{0,F} \| \xi_{\bb, F} \PiZeroF v_h \| \lesssim \| \bb\|^\frac{1}{2}_{[L^\infty(E)]^3}h_E^{\reg + \frac{1}{2}} | u |_{\reg+1,E} \| v_h \|_{\cip,E} \, .
\end{aligned}
\]
\end{proof}
\begin{theorem}
Under the mesh assumptions \textbf{(A1)} and the regularity assumption \textbf{(A2)}, let $u$ be the solution of \eqref{eq:problema-continuo} and let $u_h$ be the solution of the discrete problem \eqref{eq:problema-discreto}. 
Then it holds
\begin{equation}
\label{eq:final_error_squared}
\begin{split}
\| u - u_h \|^2_{\cip} 
\lesssim \sum_{E \in \Omega_h} \Theta_E \Bigl( & \lambda_E 
+ \beps_0^{-1} \| \beps \|^2_{s,E} 
+ \sigma_0^{-1} h_E^2 \| \sigma \|^2_{s+1,E} \\
& + h_E \bigl( 1 + \| \bb \|_{[L^\infty (E)]^3} + \| \bb \|^2_{[L^\infty (\mathcal{D}(E))]^3} \bigr) \\
& + \nu_E h_E^2 \| \bb \|^2_{[W^{s,\infty}(E)]^3} + \nu_E h_E^4 \Bigr) \, ,
\end{split}
\end{equation}
where the constant $\Theta_E$ depends on $\| u \|_{\reg+1}$ and $\| f \|_{\reg}$.
\end{theorem}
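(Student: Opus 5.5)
The plan is to assemble the final estimate directly from the abstract error bound of Proposition \ref{prp:abstract}, squaring it and inserting the local bounds of Lemmas \ref{lm:epi}--\ref{lm:errN}. First, I would rewrite \eqref{eq:abstract} in a localized squared form. Every residual functional $\eta_i^E$ is in fact bounded by a local quantity times $\|v_h\|_{\cip,E}$ (or $\|v_h\|_{\cip,\mathcal D(E)}$ for $\eta_J^E$ and $\eta_b^E$). So before taking the supremum I would apply the discrete Cauchy--Schwarz inequality over the elements:
\[
\sum_{E}R_E\,\|v_h\|_{\cip,\mathcal D(E)}\le\Bigl(\sum_E R_E^2\Bigr)^{\frac12}\Bigl(\sum_E\|v_h\|^2_{\cip,\mathcal D(E)}\Bigr)^{\frac12}\lesssim\Bigl(\sum_E R_E^2\Bigr)^{\frac12}\|v_h\|_{\cip}.
\]
The last step uses the finite overlap of the patches $\mathcal D(E)$ guaranteed by \textbf{(A1)}. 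This turns the sum of local dual norms in \eqref{eq:abstract} into $\|u-u_h\|^2_{\cip}\lesssim\|e_I\|^2_{\cip}+\sum_E\sum_i (R_i^E)^2$, where $R_i^E$ denotes the right-hand side of the corresponding lemma.

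Second, I would square each lemma's bound. Each bound is a product of parameter-dependent factors with $h_E^{s}$ times norms of $u$ or $f$. The norms of $u$ and $f$ are absorbed into the data-dependent constant $\Theta_E$ (together with $h_E^{2s}$, which is at most bounded on a bounded domain). The contributions then match the bracket term by term:
\begin{itemize}
\item Lemma \ref{lm:epi} gives $\lambda_E+h_E\|\bb\|^2_{[L^\infty(\mathcal D(E))]^3}$.
\item Lemma \ref{lm:erra} gives $\lambda_E+\beps_0^{-1}\|\beps\|^2_{s,E}$.
\item Lemma \ref{lm:errc} gives $\lambda_E+\sigma_0^{-1}h_E^2\|\sigma\|^2_{s+1,E}$.
\item Lemma \ref{lm:errN} gives $\lambda_E+h_E\|\bb\|_{[L^\infty(E)]^3}$.
\item Lemma \ref{lm:errJ} gives $h_E\|\bb\|^2_{[L^\infty(\mathcal D(E))]^3}$.
\item Lemma \ref{lm:errF} gives $\nu_E h_E^{4}$ after factoring out $h_E^{2s}|f|^2$.
\item Lemma \ref{lm:errb} gives $h_E$ (from the $h^{s+1/2}$ term) and $\nu_E h_E^2\|\bb\|^2_{[W^{s,\infty}(E)]^3}$.
\end{itemize}
Overlapping patch terms, such as $|u|_{s+1,\mathcal D(E)}$, are redistributed to single elements again via finite overlap and local quasi-uniformity, so that $h_K\simeq h_E$ for neighbours.

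The main obstacle is the leftover boundary integral $\int_{\partial E}(\bb\cdot\nn^E)\,e_I\,v_h\,{\rm d}s$ in Lemma \ref{lm:errb}, which is not yet bounded by a norm. My approach is to sum it over all elements. On each internal face $F$, the two contributions come with opposite normals, and both $e_I=u-u_I$ and $v_h$ are continuous across $F$ (the global spaces are $H^1$-conforming). The internal terms therefore cancel, and only faces $F\subset\Gamma$ remain. There $u=0$, so $e_I=-u_I$ can be written as $u-\PiZeroF u_I$ plus a projection remainder. I would then split $v_h=\PiZeroF v_h+(I-\PiZeroF)v_h$:
\begin{itemize}
\item The first piece is controlled by $\|\xi_{\bb,F}\PiZeroF v_h\|_{0,F}$ times $\|\bb\|^{1/2}_{L^\infty}h_E^{s+1/2}|u|_{s+1,E}$, exactly as in $\eta_{N,d}$.
\item The second piece is handled by the trace inequality and the bound $\|(I-\PiZeroF)v_h\|_{0,F}\lesssim h_E^{1/2}|v_h|_{1,E}$. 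Bounding $|v_h|_{1,E}$ by the $\cip$ norm costs a factor $\beps_0^{-1/2}$; if that is unacceptable, I would instead use the inverse estimate with the reaction norm, producing a $\nu_E$-type factor.
\end{itemize}
These boundary contributions fall into the $h_E\|\bb\|_{[L^\infty(E)]^3}$ and $\nu_E$ terms already present. I expect the bookkeeping here, meaning choosing the split so that no new parameter dependence appears beyond those listed in \eqref{eq:final_error_squared}, to be the delicate step. Everything else is collection of terms.
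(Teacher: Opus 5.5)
Your proposal is correct and follows the paper's route: combine Proposition \ref{prp:abstract} with Lemmas \ref{lm:epi}--\ref{lm:errN}, cancel the interior-face contributions of $\int_{\partial E}(\bb\cdot\nn^E)\,e_I\,v_h\,{\rm d}s$ using the continuity of $e_I$ and $v_h$, and estimate what remains on $\Gamma$. Your version is more careful than the paper's on two points: localizing via discrete Cauchy--Schwarz before taking the supremum is what actually justifies both the face cancellation and the element-wise squared form of the bound, and splitting $v_h=\PiZeroF v_h+(I-\PiZeroF)v_h$ on boundary faces is needed because $\|\cdot\|_{\cip}$ only controls $\PiZeroF v_h$ there, with the resulting extra $\nu_E h_E^2\|\bb\|^2_{[L^\infty(E)]^3}$ contribution already covered by the $\nu_E h_E^2\|\bb\|^2_{[W^{s,\infty}(E)]^3}$ term.
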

\begin{proof}
The proof is obtained by combaining Proposition \ref{prp:abstract} with the following Lemmas. We note that since the functions in $V_h^k(\Omega_h)$ are continuous, it holds
\[
\sum_{E \in \Omega_h} \int_{\partial E \setminus \Gamma} (\bb \cdot \nn^E) \, e_I \, v_h \, {\rm d}s = 0
\]
It remains to control the boundary faces. These can be estimated as
\[
\int_{\Gamma_E} (\bb \cdot \nn^E) \, e_I \, v_h \, {\rm d}s
\lesssim
\| \bb\|^\frac{1}{2}_{[L^\infty(E)]^3}h_E^{\reg + \frac{1}{2}} | u |_{\reg+1,E} \| v_h \|_{\cip,E} \, .
\]
\end{proof}

\section{Numerical results}\label{sec:num}

In this section, we present a series of three-dimensional numerical experiments to validate the theoretical convergence rates and assess the actual robustness of the proposed CIP-stabilized Virtual Element Method. We mention that the code has been implemented within the C++ library \texttt{VEM++} \cite{vem++}, a platform specifically tailored for the development and testing of Virtual Element Methods.

\paragraph{Convergence results}
In this numerical test, we verify that the proposed scheme achieves optimal convergence rates under the advection-dominated regime. Following the standard practice in the Virtual Element framework, we evaluate the accuracy of the method by computing the following error quantities:
\[
e_{L^2} := \sqrt{\sum_{E\in\Omega_h}\left\|u-\Pi^{0,E}_k u\right\|^2_{0,E}}\,.
\qquad
e_{H^1} := \sqrt{\sum_{E\in\Omega_h}\left\|\nabla (u-\Pi_k^{\nabla,E} u_h)\right\|^2_{0,E}}.
\]
To this end, we choose the analytical solution of the continuous problem to be the smooth function
\[
u(x,y,z) = \sin(\pi y) \sin(\pi z) e^x\,,
\]
defined on the unit cube domain $\Omega = (0,1)^3$. The parameters of the partial differential equation are set as follows:
\[
\boldsymbol{\varepsilon} = \nu
\begin{pmatrix}
2 + x & xy & xz \\
xy & 2 + y & yz \\
xz & yz & 2 + z
\end{pmatrix} , \qquad
\mathbf{b} = \begin{pmatrix}
y + z \\
x + z \\
x - y
\end{pmatrix} , \qquad 
\sigma = 1 + x^2 + y^2 + z^2 \,.
\]
To simulate a strongly advection-dominated regime, we select the diffusion scaling parameter as $\nu = 10^{-6}$. Additionally, the Nitsche boundary parameter is fixed to $\delta = 0.1$, and the CIP stabilization parameter is set to $\gamma = 0.025$. 

For our numerical assessment, we consider three distinct families of polyhedral meshes on the unit cube: the first family consists of structured  cubes and octahedra; the second family is made of random Voronoi polyhedra; the third family comprises regularized Voronoi polyhedra.
These grids are depicted in Figure~\ref{fig:meshes}. 

The corresponding error curves are plotted in Figure~\ref{fig:test1-1}. As expected, the plots clearly show that the method achieves the optimal convergence rates in all scenarios for different polynomial degrees $k$, confirming the robust behavior of the formulation.

\begin{figure}[htbp]
\centering
\begin{tabular}{ccc}
\includegraphics[width=0.30\textwidth]{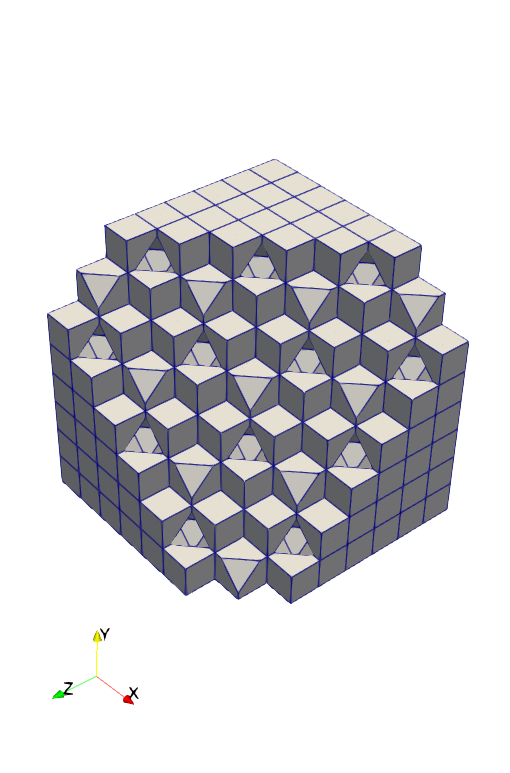}  &
\includegraphics[width=0.30\textwidth]{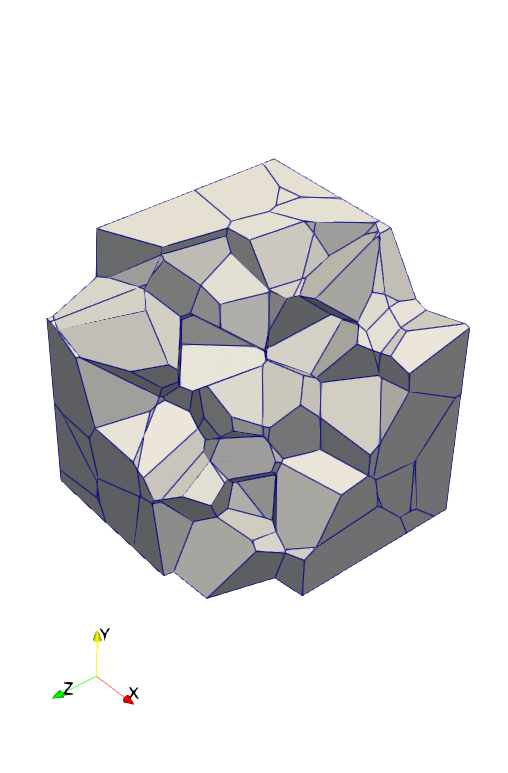} &
\includegraphics[width=0.30\textwidth]{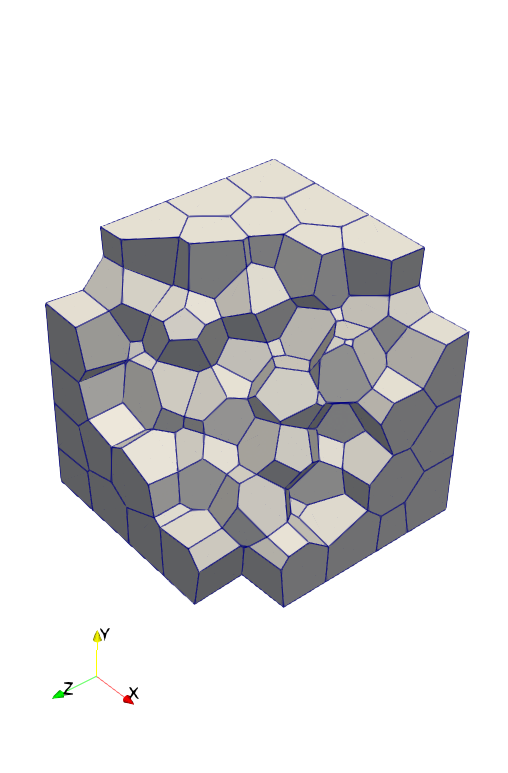}
\end{tabular}
\caption{Different types of meshes considered in the first experiment.}
\label{fig:meshes}
\end{figure}

\begin{figure}[htbp]
\centering
\begin{tabular}{cc}
\includegraphics[width=0.42\textwidth]{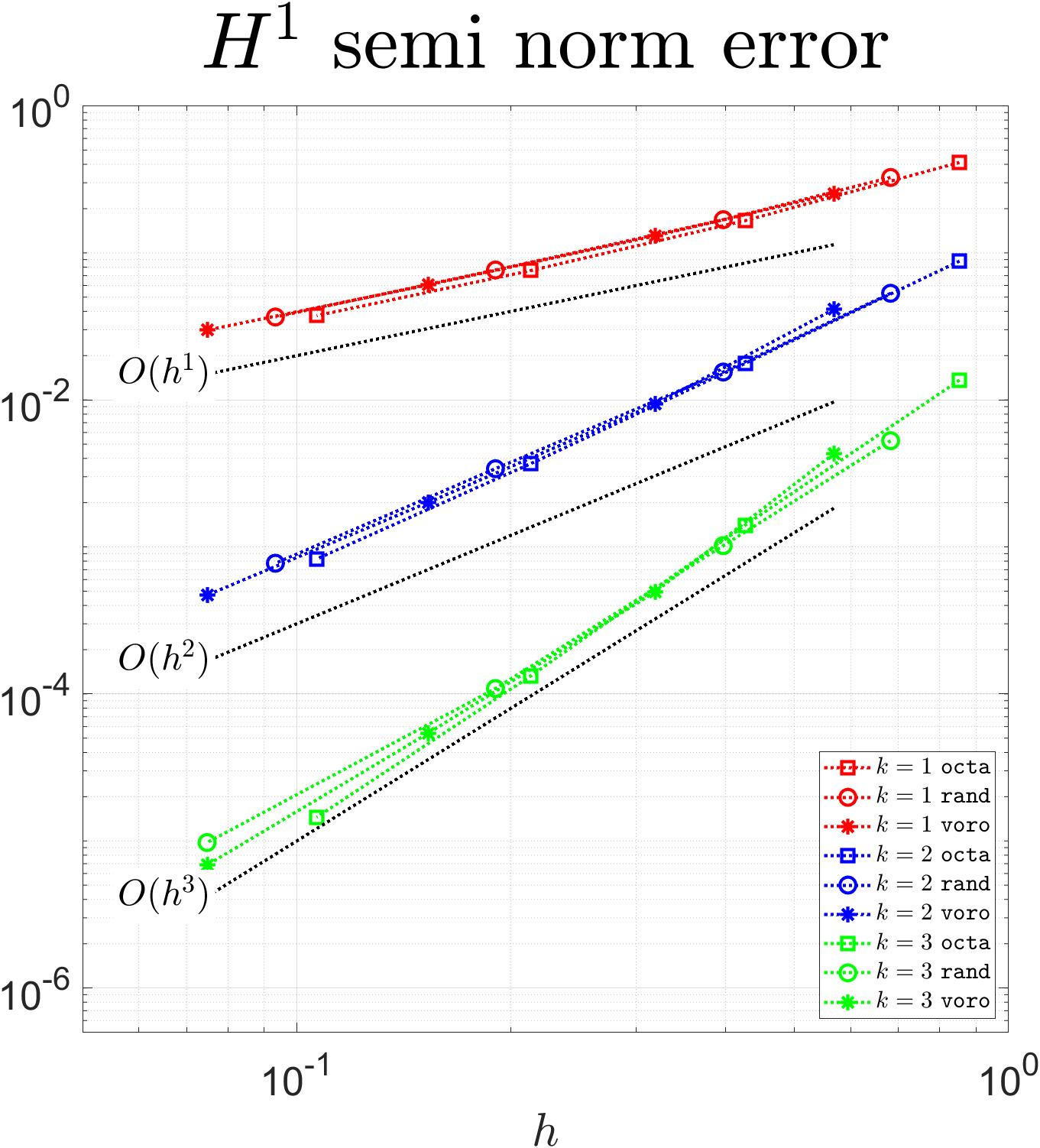}  &
\includegraphics[width=0.42\textwidth]{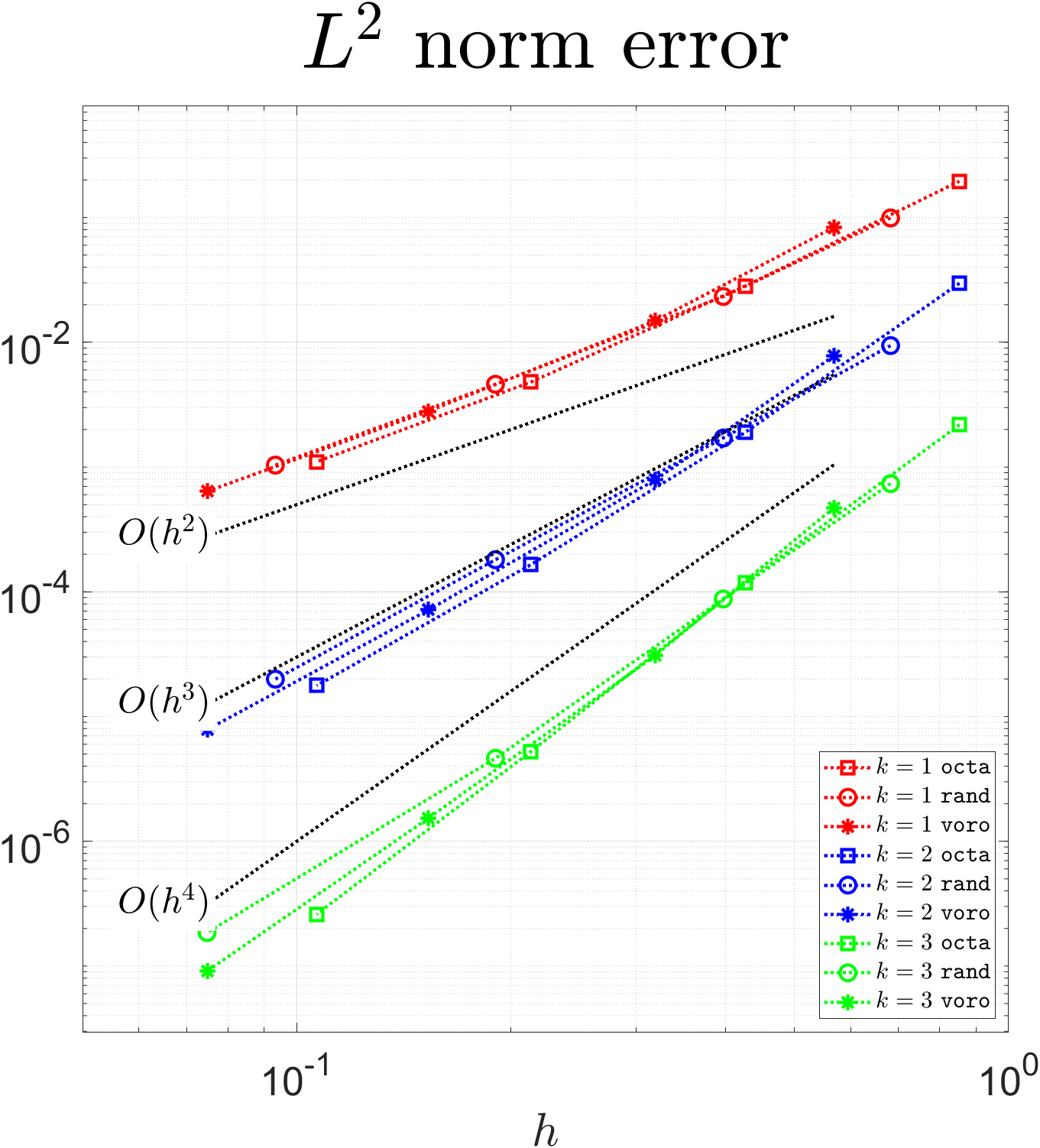}
\end{tabular}
\caption{Convergence results on different types of meshes and different values of $k$.}
\label{fig:test1-1}
\end{figure}

\paragraph{Solution with a boundary layer}
In this numerical experiment, we consider an isotropic diffusion tensor given by $\nu \mathbf{I}$, where $\nu > 0$ acts as a scaling factor. The exact analytical solution to \eqref{eq:problema-continuo} is prescribed as:
\[
u (x,y,z) = \frac{1 - x + x \exp( - \nu^{-1}) - \exp(-x \nu^{-1})}{1 - \exp(-\nu^{-1})} \, .
\]
This function exhibits a sharp boundary layer at $x=0$, while behaving approximately as $1-x$ throughout the remainder of the domain. Consequently, mesh refinement is superfluous in regions far from the layer, whereas a higher density of elements is strictly required near $x=0$ to properly capture the steep solution gradient.

To this end, we construct a locally refined mesh over the unit cube $\Omega = [0,1]^3$, originating from an initial uniform grid composed of 64 cubes. At each subsequent refinement iteration, the elements within the layer immediately adjacent to the boundary $x=0$ are subdivided by halving their edge lengths. Crucially, this localized refinement strategy introduces hanging nodes into the grid. In this way, we construct a squence of meshes from \texttt{mesh0} to \texttt{mesh4}. A cubic mesh structure is selected to obtain a good visualization of the numerical solution; furthermore, thanks to the inherent flexibility of the VEM framework, elements featuring hanging nodes are naturally treated as general polytopes, completely bypassing the need for complex, conforming transition algorithms. For this test, the problem parameters are set to $\nu = 10^{-4}$, $\sigma \equiv 0$, and $\bb = [-1, 0, 0]^T$, while the CIP penalty parameter is chosen as $\gamma = 0.25$, and $\delta = 1/500.$

Figure \ref{fig:test2-1} reports a comparison between the numerical solutions obtained with and without the CIP stabilization. The standard, unstabilized formulation exhibits severe spurious oscillations that propagate upstream from the boundary layer, culminating in a prominent unphysical peak of approximately $2.4$. Conversely, the CIP method successfully suppresses these global instabilities, delivering a remarkably smooth profile and restricting the localized overshoot near the layer to a maximum value of $1.3$.

In Figure \ref{fig:test2-2}, we track the evolution of the CIP-stabilized solution across the sequence of progressively refined meshes, from \texttt{mesh0} to \texttt{mesh4}. Specifically, the numerical solution is evaluated along a 1D line cut spanning from $(0, 0.5, 0.5)$ to $(1, 0, 0)$. As the localized mesh refinement concentrates elements near the outflow boundary $x = 0$, the numerical solution converges toward the prescribed boundary value $u = 0$. 
Concurrently, a slight increase in the magnitude of the localized overshoot is observed. While tuning the CIP parameter $\gamma$ to a higher value could further damp this residual peak, it introduces a well-known numerical trade-off; an excessive stabilization could compromise the sharp profile and causing the numerical solution to deviate further from zero at $x = 0$.

\begin{figure}[htbp]
\centering
\begin{tabular}{cc}
\includegraphics[width=0.42\textwidth]{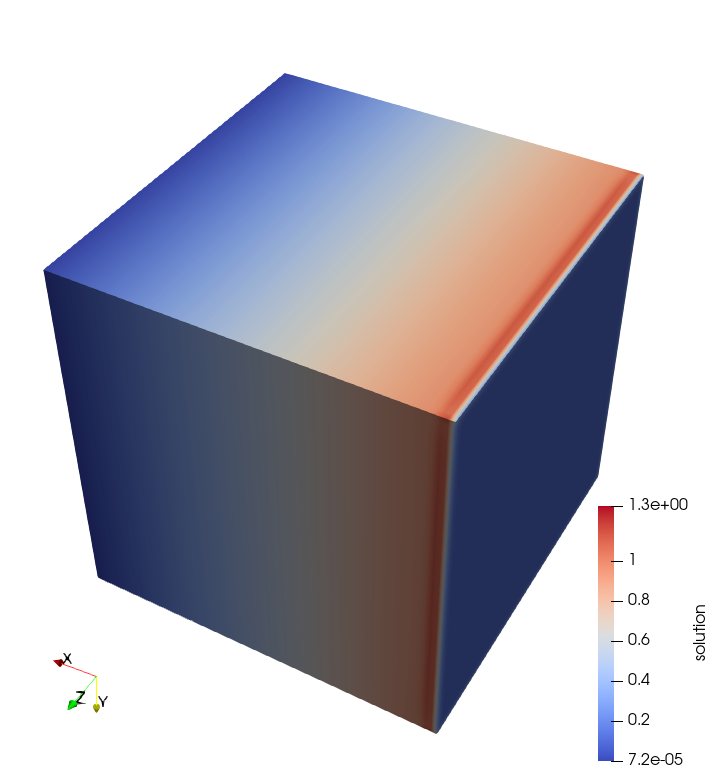}  &
\includegraphics[width=0.42\textwidth]{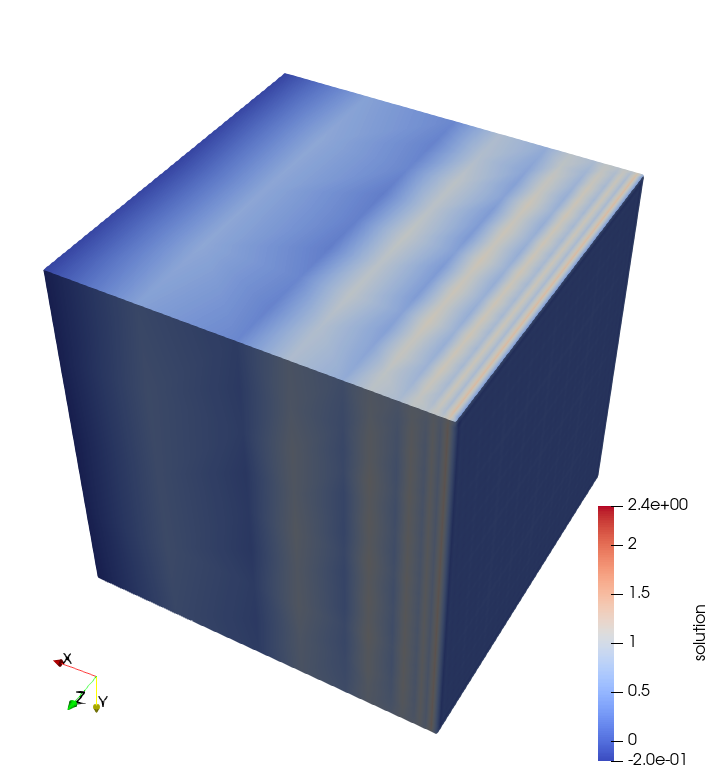}
\end{tabular}
\caption{Numerical solutions obtained without CIP (left) and with CIP (right).}
\label{fig:test2-1}
\end{figure}

\begin{figure}[htbp]
\centering
\includegraphics[width=0.42\textwidth]{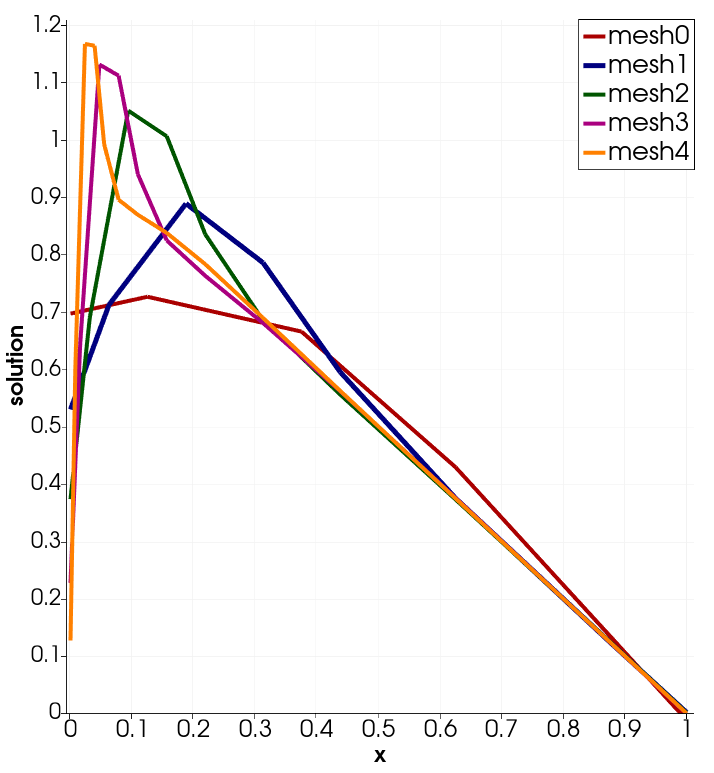}  
\caption{Numerical solutions on the line connecting (0,0.5,0.5) to (1, 0.5, 0.5) for different meshes.}
\label{fig:test2-2}
\end{figure}

\paragraph{Advection of a Discontinuous Profile}
In this numerical experiment, we investigate the transport of a discontinuous scalar field injected at the inflow boundary $z = 0$, driven by the uniform, skew advective field $\bb = [0.15, 0.25, 1.0]^T$. 
The problem is studied in the strongly convection-dominated regime by setting the isotropic diffusion coefficient to $10^{-6}$ and the source term to $f = 0$. 
Furthermore, the reaction coefficient is identically zero; as discussed in \cite{VEM-CIP-Conf}, the VEM-CIP stabilization retains its robustness for divergence-free, linear advective fields even in the absence of a reaction term. 

To impose the Dirichlet boundary conditions weakly, we set the Nitsche penalty parameter to $\delta = 0.05$. 
Specifically, we apply Dirichlet boundary conditions on the inflow faces $x=0$, $y=0$, and $z=0$, while on the remaining faces we impose homogeneous Neumann boundary conditions, allowing the scalar quantity to exit the domain freely.
On the inflow boundary $z=0$, we prescribe the following discontinuous profile:
$$
g(x,y,0) = 
\begin{cases}
1 & \text{if } 0.15 \leq x \leq 0.35, \, 0.15 \leq y \leq 0.35, \\
1 & \text{if } 0.15 \leq x \leq 0.35, \, 0.65 \leq y \leq 0.85, \\
0 & \text{otherwise},
\end{cases}
$$
while on the remaining Dirichlet faces we set $g \equiv 0$. 

To highlight the effectiveness of the proposed stabilization, we compare the standard Galerkin VEM approach, where the CIP term is neglected, against the CIP-stabilized formulation using a penalty parameter $\gamma = 0.1$.
Finally, the computational domain is discretized using a uniform cubic mesh, obtained by subdividing each edge into equal segments, which ensures an accurate and clear visualization of the advected profile.

The numerical solutions obtained on a uniform cubic mesh (with each edge subdivided into 40 segments) are depicted in Figure \ref{fig:test3}. Consistent behavior is observed across different mesh resolutions. 
In the standard Galerkin framework, the solution exhibits significant spurious oscillations as it approaches the upper boundary $z=1$. Furthermore, the numerical solution displays a non-physical overshoot, reaching a maximum value of approximately $2.2$. 
In contrast, by incorporating the CIP bilinear form into the formulation, the numerical solution becomes remarkably smoother. With the proposed stabilization, the maximum value is successfully bounded, remaining close to the expected physical peak of $1$.

Figure \ref{fig:test3-2} displays the cross-sectional profiles at the slice $z = 0.5$. 
It should be noted that the reduction of the peak value from 1.0 to approximately 0.58 at z=0.5 is attributed to the combined effect of the transport-induced geometric spreading of the advected profile and the numerical dissipation introduced by the CIP stabilization. While the latter is necessary to ensure a good approximation of the solution, it inherently provides a localized smoothing of the sharp gradients.
Interestingly, for a similar two-dimensional problem investigated in \cite{trezzihyp}, it was proven that the CIP stabilization outperforms the SUPG stabilization \cite{Dauphin2026} when odd values of $k$ are employed. Conversely, the SUPG approach exhibits a superior behavior for even polynomial degrees $k$.
\begin{figure}[htbp]
\centering
\begin{tabular}{cc}
\includegraphics[width=0.42\textwidth]{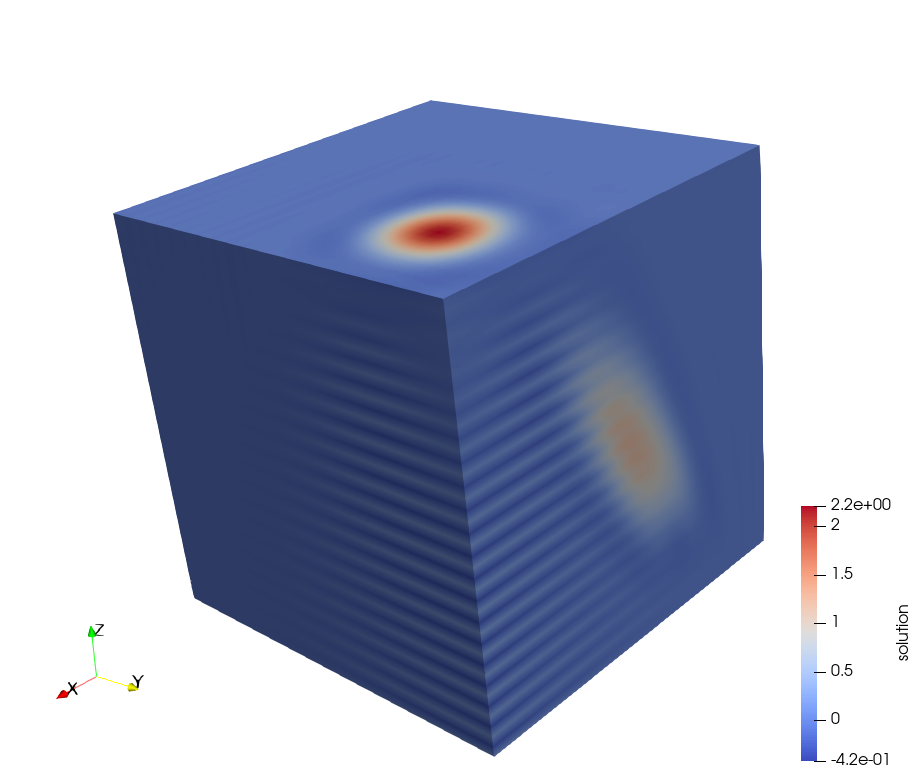}  &
\includegraphics[width=0.42\textwidth]{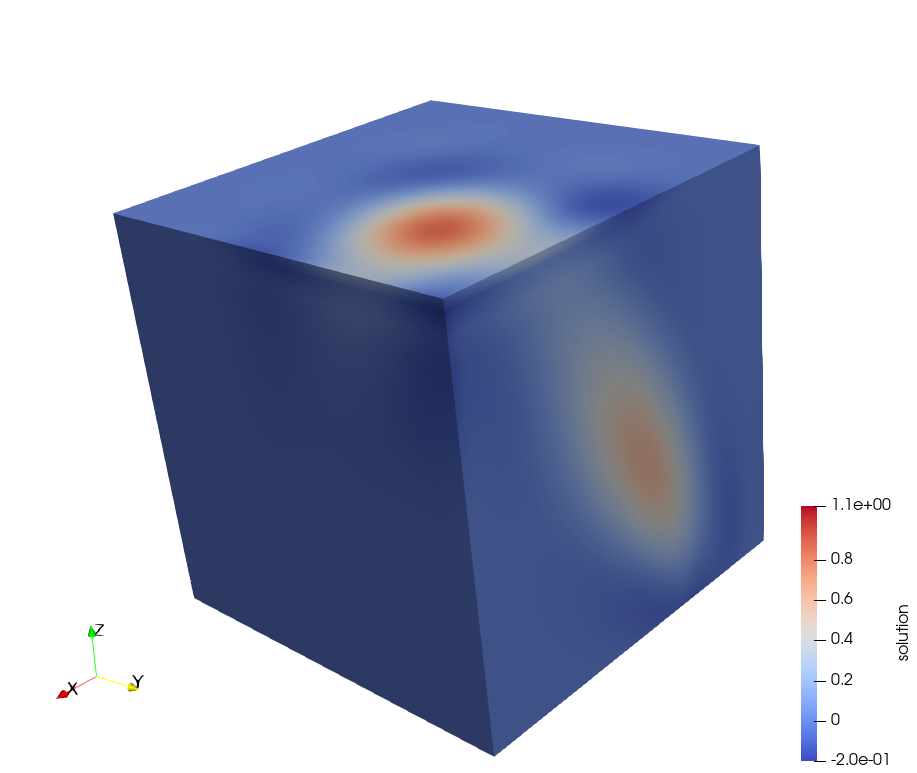}
\end{tabular}
\caption{Numerical solutions obtained without CIP (left) and with CIP (right).}
\label{fig:test3}
\end{figure}

\begin{figure}[htbp]
\centering
\begin{tabular}{cc}
\includegraphics[width=0.42\textwidth]{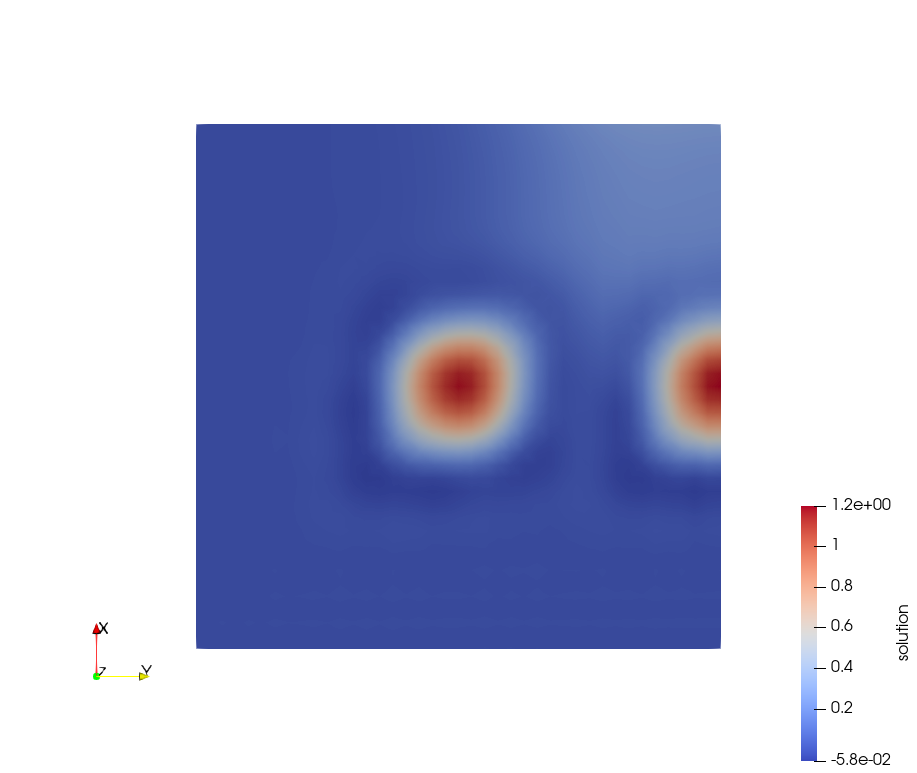}  &
\includegraphics[width=0.42\textwidth]{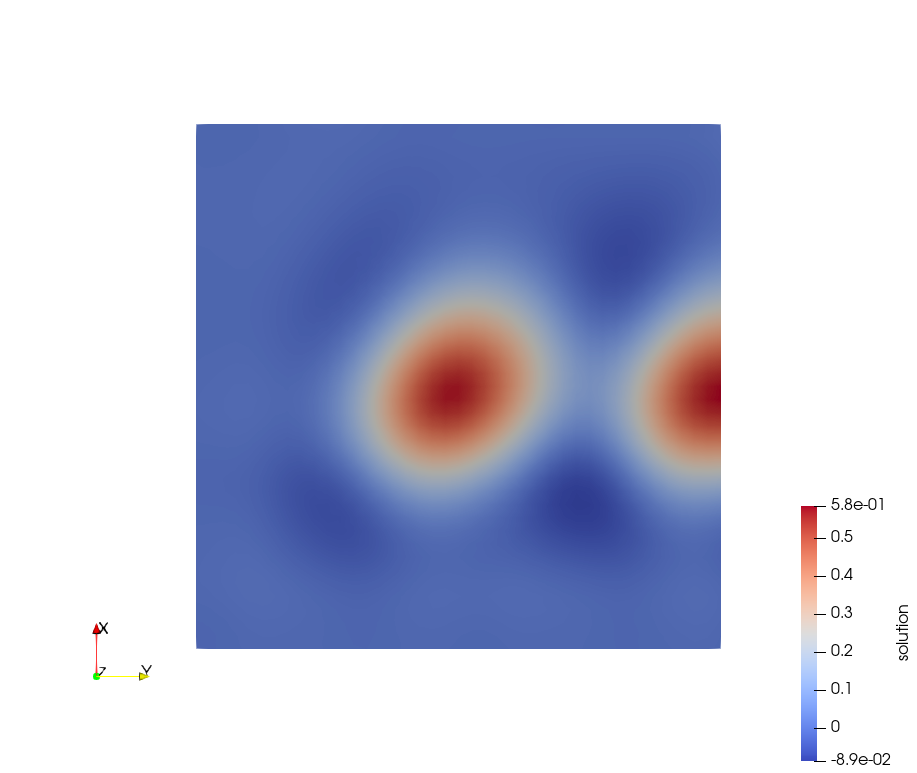}
\end{tabular}
\caption{Numerical solutions obtained without CIP (left) and with CIP (right) at the slice $z = 0.5$.}
\label{fig:test3-2}
\end{figure}

\section*{Acknowledgments}
MT has been funded by the European Union (ERC, NEMESIS, project number 101115663).
Views and opinions expressed are however those of the author only and do not necessarily reflect those of the EU or the ERC Executive Agency.
MT also acknowledges the support of the INdAM-GNCS Project, code CUP\_ E53C25002010001.

\bibliographystyle{plain}
\bibliography{biblio}

\end{document}